    \DeclareMathOperator*{\rank}{\mathrm{rank}}
    \DeclareMathOperator*{\tr}{\mathrm{tr}}
\theoremstyle{plain} 
    \newtheorem{theorem}{Theorem}
    \newtheorem{corollary}{Corollary}
    \newtheorem{lemma}{Lemma}
\theoremstyle{definition}
    \newtheorem{assumption}{Assumption}
    \newtheorem{definition}{Definition}
    \newtheorem{example}{Example}
\theoremstyle{remark}
    \newtheorem{remark}{Remark}
\algnewcommand{\algorithmicand}{\textbf{and }}
\algnewcommand{\algorithmicor}{\textbf{or }}
\algnewcommand{\OR}{\algorithmicor}
\algnewcommand{\AND}{\algorithmicand}
\title{General Low-rank Matrix Optimization:\\ Geometric Analysis and Sharper Bounds}
\author{%
  Haixiang Zhang\\
  Department of Mathematics\\
  University of California, Berkeley\\
  Berkeley, CA 94704 \\
  \texttt{haixiang\_zhang@berkeley.edu} 
  \And
  Yingjie Bi\\
  Department of IEOR\\
  University of California, Berkeley\\
  Berkeley, CA 94704 \\
  \texttt{yingjiebi@berkeley.edu} \\
  \And
  Javad Lavaei\\
  Department of IEOR\\
  University of California, Berkeley\\
  Berkeley, CA 94704 \\
  \texttt{lavaei@berkeley.edu}
}
\begin{document}

\maketitle

\vspace{-1.5em}
\begin{abstract}
  This paper considers the global geometry of general low-rank minimization problems via the Burer-Monterio factorization approach. For the rank-$1$ case, we prove that there is no spurious second-order critical point for both symmetric and asymmetric problems if the rank-$2$ RIP constant $\delta$ is less than $1/2$. Combining with a counterexample with $\delta=1/2$, we show that the derived bound is the sharpest possible. For the arbitrary rank-$r$ case, the same property is established when the rank-$2r$ RIP constant $\delta$ is at most $1/3$. We design a counterexample to show that the non-existence of spurious second-order critical points may not hold if $\delta$ is at least $1/2$. In addition, for any problem with $\delta$ between $1/3$ and $1/2$, we prove that all second-order critical points have a positive correlation to the ground truth. Finally, the strict saddle property, which can lead to the polynomial-time global convergence of various algorithms, is established for both the symmetric and asymmetric problems when the rank-$2r$ RIP constant $\delta$ is less than $1/3$. The results of this paper significantly extend several existing bounds in the literature. 
\end{abstract}
\vspace{-1.5em}

\section{Introduction}
\label{eqn:intro}

Given the natural numbers $n$, $m$ and $r$, consider the low-rank matrix optimization problems
\begin{align}\label{eqn:sym-original} 
    \min_{M\in\mathbb{R}^{n\times n}}~f_s(M) \quad \mathrm{s.t.}~ \rank(M) \leq r,\quad M^T = M,\quad M \succeq0
\end{align}
and 
\begin{align}\label{eqn:asym-original} 
    \min_{M\in\mathbb{R}^{n\times m}}~ f_a(M)  \quad \mathrm{s.t.}~ \rank(M) \leq r,
\end{align}
where the functions $f_s(\cdot)$ and $f_a(\cdot)$ are twice continuously differentiable. Problems \eqref{eqn:sym-original}-\eqref{eqn:asym-original} are referred to as the \emph{symmetric} and the \emph{asymmetric} problem, respectively. In addition, we call these problems \emph{linear} if the objective function is induced by a linear measurement operator, i.e.,
\begin{align*} 
f(M) = \textstyle{\frac12} \|\mathcal{A}(M) - b\|_F^2 
\end{align*}
for some vector $b\in\mathbb{R}^p$ and linear operator $\mathcal{A}$ mapping each matrix $M$ to a vector in $\mathbb{R}^p$, where $f(M)$ denotes either $f_s(M)$ or $f_a(M)$. Those problems not fitting into the above model are called \emph{nonlinear}. Low-rank optimization problems arise in a wide range of applications, e.g., matrix completion \citep{candes2009exact,recht2010guaranteed}, phase synchronization \citep{singer2011angular,boumal2016nonconvex}, phase retrieval \citep{shechtman2015phase}; see \cite{chen2018harnessing,chi2019nonconvex} for an overview of the topic. To overcome the non-convex rank constraint, one may resort to convex relaxations. The approach of replacing the rank constraint with a nuclear norm regularization is proven to provide the optimal sample complexity \citep{candes2009exact,recht2010guaranteed,candes2010power}. However, solving the convexified problems involves computing a Singular Value Decomposition (SVD) in each iteration and results in heavy computational burdens. Along with the issue of large space complexities, the convexification approach is impractical for large-scale problems. Therefore, it is important to design efficient alternative methods with similar theoretical guarantees.

\subsection{Burer--Monterio factorization and basic properties}

Instead of directly solving convex relaxations of problems \eqref{eqn:sym-original}-\eqref{eqn:asym-original}, we consider a computationally efficient approach, namely the Burer--Monterio factorization \citep{burer2003nonlinear}. The factorization approach is based on the observation that any matrix $M\in\mathbb{R}^{n\times m}$ with rank at most $r$ can be written in the form of $UV^T$, where $U\in\mathbb{R}^{n\times r}$ and $V\in\mathbb{R}^{m\times r}$. Then, the asymmetric problem \eqref{eqn:asym-original} is equivalent to
\begin{align}\label{eqn:asym} 
    \min_{U\in\mathbb{R}^{n\times r},V\in\mathbb{R}^{m\times r}}~ h_a(U,V),
\end{align}
where $h_a(U,V) := f_a(UV^T)$. Similarly, the symmetric problem \eqref{eqn:sym-original} is equivalent to
\begin{align}\label{eqn:sym} 
    \min_{U\in\mathbb{R}^{n\times r}}~h_s(U),
\end{align}
where $h_s(U) := f_s(UU^T)$. The Burer-Monterio factorization provides a natural parameterization of the low-rank structure of the unknown solution, and reformulates problems \eqref{eqn:sym-original}-\eqref{eqn:asym-original} as unconstrained optimization problems. In addition, the number of variables reduces from $O(n^2)$ or $O(nm)$ to as low as $O(n)$ or $O(n+m)$ when $r\ll \min\{n,m\}$. However, the reformulated problems are highly non-convex, and $\mathcal{NP}$-hard to solve in general. On the other hand, these problems share a specific non-convex structure, which makes it possible to utilize the structure and design efficient algorithms to find a global optimum under some conditions. In addition to the special structure, a regularity condition, named the Restricted Isometry Property, is required to guarantee the convergence of common iterative algorithms. We state the following two definitions only in the context of the symmetric problem since the corresponding definitions for the asymmetric problem are similar.
\begin{definition}[\cite{recht2010guaranteed,zhu2018global}]
Given natural numbers $r$ and $t$, the function $f_s(\cdot)$ is said to satisfy the \textbf{Restricted Isometry Property} (RIP) of rank $(2r,2t)$ for a constant $\delta\in[0,1)$,  denoted as $\delta$-RIP$_{2r,2t}$, if 
\[ (1-\delta)\| K\|_F^2 \leq \left[\nabla^2 f_s(M)\right] (K,K) \leq (1+\delta)\| K\|_F^2 \]
holds for all matrices $M,K\in\mathbb{R}^{n\times n}$ such that $\rank(M)\leq 2r,\rank(K) \leq 2t$, where $\left[\nabla^2 f_s(M)\right](\cdot,\cdot)$ is the curvature of the Hessian at point $M$.
\end{definition}
The RIP property is satisfied by the objective functions in a wide range of applications, either on the entire low-rank manifold or on a part of the manifold that is of interest; examples include matrix sensing \citep{candes2009exact,recht2010guaranteed}, matrix completion \citep{candes2010power}, and matrix sensing with non-Gaussian noise \citep{davenport20141}. For instance, in the case of linear measurements with a Gaussian model, \cite{candes2009exact} showed that $O\left(nr/\delta^2\right)$ samples are enough to ensure the $\delta$-RIP$_{2r,2r}$ property with high probability. Since the analysis of the statistical behavior of problems \eqref{eqn:asym}-\eqref{eqn:sym} is not in the scope of this work, we focus on the case when the RIP property is satisfied by the objective function. We note that the RIP property is equivalent to the restricted strongly convex and smooth property defined in \cite{wang2017unified,park2018finding,zhu2021global} with the condition number $(1+\delta)/(1-\delta)$. Intuitively, the RIP property implies that the Hessian matrix is close to the identity tensor when the perturbation is restricted to be low-rank. This intuition naturally leads to the following definition.
\begin{definition}[\cite{biglobal}]
Given a natural number $r$, the function $f_s(\cdot)$ is said to satisfy the \textbf{Bounded Difference Property} (BDP) of rank $2r$ for a constant $\kappa\geq0$, denoted as $\kappa$-BDP$_{2r}$, if 
\[ \left|\left[\nabla^2 f_s(M) - \nabla^2 f_s(M')\right] (K,L) \right| \leq \kappa\| K\|_F\| L\|_F \]
holds for all matrices $M,M',K,L\in\mathbb{R}^{n\times n}$ such that $\rank(M),\rank(M'),\rank(K),\rank(L)\leq 2r$. 
\end{definition}
It has been proven in \cite[Theorem 1]{biglobal} that those functions satisfying the $\delta$-RIP$_{2r,2r}$ property also satisfy the $4\delta$-BDP$_{2r}$ property. 
With the RIP property, there are basically two categories of algorithms that can solve the factorized problem in polynomial time. Algorithms in the first category require a careful initialization so that the initial point is already in a small neighbourhood of a global optimum, and a certain local regularity condition in the neighbourhood ensures that local search algorithms will converge linearly to a global optimum; see \cite{tu2016low,bhojanapalli2016dropping,park2018finding} for a detailed discussion. The other class of algorithms is able to converge globally from a random initialization. The convergence of these algorithms is usually established via the geometric analysis of the landscape of the objective function. One of the important geometric properties is the strict saddle property \citep{sun2018geometric}, which combined with the smoothness properties can guarantee the global polynomial-time convergence for various saddle-escaping algorithms \citep{jin2017escape,jin2018accelerated,sun2018geometric,huang2019perturbed}. For the linear case, \cite{ge2016matrix,ge2017no} proved the strict saddle property for both problems \eqref{eqn:asym}-\eqref{eqn:sym} when the RIP constant is sufficiently small. More recently, \cite{zhu2021global} extended the results to the nonlinear asymmetric case. Moreover, a weaker geometric property, namely the non-existence of \emph{spurious (non-global) second-order critical points}, has been established for both problems when the RIP constant is small \citep{li2019non,ha2020equivalence}. We note that second-order critical points are points that satisfy the first-order and the second-order critical necessary conditions, and thus the result of second-order critical points implies the non-existence of spurious local minima. Under certain regularity conditions, this weaker property is also able to guarantee the global convergence from a random initialization without an explicit convergence rate \citep{lee2016gradient,panageas2016gradient}. Please refer to Table \ref{tab:result} for a summary of the state-of-the-art results.
%

Most of the afore-mentioned papers are based on the following assumption on the low-rank critical points of the functions $f_s(\cdot)$ and $f_a(\cdot)$.
\begin{assumption}\label{asp:1}
The function $f_a(\cdot)$ has a first-order critical point $M^*_a$ such that $\rank(M^*_a) \leq r$. Similarly, the function $f_s(\cdot)$ has a first-order critical point $M_s^*$ that is symmetric, positive semi-definite and of rank at most $r$. 
\end{assumption}
This assumption is inspired by the noiseless matrix sensing problem in the linear case for which the non-negative objective function becomes zero (the lowest value possible) at the true solution. This is a natural property of the matrix sensing problem for nonlinear measurement models as well. Under the above assumption and the RIP property, \cite{zhu2018global} proved that $M^*_s$ and $M_a^*$ are the unique global minima of problems \eqref{eqn:sym-original}-\eqref{eqn:asym-original}. 
\begin{theorem}[\cite{zhu2018global}]
If the functions $f_s(\cdot)$ and $f_a(\cdot)$ satisfy the $\delta$-RIP$_{2r,2r}$ property, then the critical points $M^*_s$ and $M_a^*$ are the \emph{unique global minima} of problems \eqref{eqn:sym-original}-\eqref{eqn:asym-original}.
\end{theorem}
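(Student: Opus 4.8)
The plan is to prove a strong-convexity-type inequality along the line segment joining $M^*_s$ (resp.\ $M^*_a$) to an arbitrary feasible point, using the RIP property to lower-bound the Hessian curvature along that segment. I treat the symmetric case in detail; the asymmetric case is entirely analogous. First I would note that Assumption~\ref{asp:1} gives $\nabla f_s(M^*_s) = 0$ as a first-order critical point of $f_s$ on the full matrix space. Let $M \in \mathbb{R}^{n\times n}$ be any feasible point of \eqref{eqn:sym-original}, so $M^T = M$, $M \succeq 0$, and $\rank(M) \le r$. Applying Taylor's theorem with integral remainder to $f_s$ along $M(t) := M^*_s + t(M - M^*_s)$ for $t\in[0,1]$, and using $\nabla f_s(M^*_s) = 0$, yields
\[
  f_s(M) - f_s(M^*_s) = \int_0^1 (1-t)\,\bigl[\nabla^2 f_s(M(t))\bigr]\bigl(M - M^*_s,\, M - M^*_s\bigr)\, dt.
\]

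The key step is to verify that the rank conditions required by the $\delta$-RIP$_{2r,2r}$ property hold along this segment. Since $\rank(M^*_s) \le r$ and $\rank(M) \le r$, the convex combination $M(t)$ satisfies $\rank(M(t)) \le 2r$, and the perturbation direction $K := M - M^*_s$ satisfies $\rank(K) \le 2r$. Hence the RIP property gives $\bigl[\nabla^2 f_s(M(t))\bigr](K,K) \ge (1-\delta)\|K\|_F^2$ for every $t\in[0,1]$. Substituting this into the integral above and using $\int_0^1 (1-t)\,dt = \tfrac12$, I obtain
\[
  f_s(M) - f_s(M^*_s) \;\ge\; \frac{1-\delta}{2}\,\|M - M^*_s\|_F^2 \;\ge\; 0,
\]
where equality forces $M = M^*_s$ because $\delta < 1$. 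This establishes that $M^*_s$ is the unique global minimum of \eqref{eqn:sym-original}. Running the identical argument with $f_a$, $M^*_a$, and any $M$ with $\rank(M) \le r$ (no positive-semidefiniteness needed in this case) proves the corresponding statement for \eqref{eqn:asym-original}.

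I do not expect a serious obstacle; the one point that needs care is the bookkeeping of ranks so that the RIP inequality is legitimately applicable at every point of the segment. In particular, it is worth emphasizing that one only needs to bound the rank of the intermediate iterates $M(t)$ and of the difference $K$ by $2r$ — which follows purely from the rank-$r$ bounds at the two endpoints — and that feasibility of the intermediate points (e.g.\ the PSD constraint in the symmetric problem) plays no role in the curvature estimate. Everything else is a direct consequence of the RIP definition and the first-order criticality in Assumption~\ref{asp:1}.
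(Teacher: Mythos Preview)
Your proof is correct. The paper does not supply its own proof of this statement---it is cited as a known result from \cite{zhu2018global}---but the restricted-strong-convexity argument you give is exactly the mechanism the paper itself invokes elsewhere (for instance, the inequality $f(M_t)-f(M^*)\geq \tfrac{1-\delta}{2}\|M_t-M^*\|_F^2$ in the proof of Theorem~\ref{thm:svp}, and the integral representation of $\langle\nabla f_a(\tilde M),\tilde M-M^*\rangle$ in the proof of Theorem~\ref{thm:nes-suf}), so your route is essentially the intended one.
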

Given a solution $(U^*,V^*)$ to problem \eqref{eqn:asym}, we observe that $(U^*P,V^*P^{-T})$ is also a solution for any invertible $P\in\mathbb{R}^{r\times r}$. This redundancy may induce an extreme non-convexity on the landscape of the objective function. To reduce this redundancy, \cite{tu2016low} considered the regularized problem
\begin{align}
    \label{eqn:asym-reg} \min_{U\in\mathbb{R}^{n\times r},V\in\mathbb{R}^{m\times r}}~ \rho(U,V),
\end{align}
where
\[ \rho(U,V) := h_a(U,V) + \frac{\mu}{4} \cdot g(U,V) \]
with a constant $\mu>0$ and the regularization term  
\[ g(U,V) := \|U^TU - V^TV\|_F^2. \]
The regularization term is introduced to ``balance'' the magnitudes of $U^*$ and $V^*$. \cite{zhu2018global} showed that the regularization term does not introduce bias and thus problem \eqref{eqn:asym-reg} is equivalent to the original problem \eqref{eqn:asym-original}.
\begin{theorem}[\cite{zhu2018global}]
Any first-order critical point $(U^*,V^*)$ of problem \eqref{eqn:asym-reg} satisfies $(U^*)^TU^* = (V^*)^TV^*$. Moreover, problems \eqref{eqn:asym-original} and \eqref{eqn:asym-reg} are equivalent.
\end{theorem}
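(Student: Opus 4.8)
The plan is to first compute the stationarity conditions of \eqref{eqn:asym-reg} explicitly and then extract the balancing identity $(U^*)^TU^* = (V^*)^TV^*$ from them by pure linear algebra; the equivalence of the two problems then follows almost for free. Write $G := \nabla f_a(U^*(V^*)^T)$ and $D := (U^*)^TU^* - (V^*)^TV^*$. By the chain rule one has $\nabla_U h_a(U,V) = \nabla f_a(UV^T)\,V$ and $\nabla_V h_a(U,V) = [\nabla f_a(UV^T)]^T U$, and a direct differentiation of $g(U,V) = \|U^TU - V^TV\|_F^2$ gives $\nabla_U g(U,V) = 4U(U^TU - V^TV)$ and $\nabla_V g(U,V) = -4V(U^TU - V^TV)$. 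Hence the first-order conditions $\nabla_U\rho(U^*,V^*) = 0$ and $\nabla_V\rho(U^*,V^*) = 0$ become
\begin{align*}
G V^* + \mu\, U^* D = 0, \qquad G^T U^* - \mu\, V^* D = 0.
\end{align*}

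Next I would left-multiply the first identity by $(U^*)^T$ and the second by $(V^*)^T$, obtaining $(U^*)^T G V^* = -\mu\,(U^*)^TU^*\,D$ and $(V^*)^T G^T U^* = \mu\,(V^*)^TV^*\,D$. The two left-hand sides are transposes of each other, and $D$, $(U^*)^TU^*$, $(V^*)^TV^*$ are all symmetric, so transposing the first relation and comparing it with the second gives $(V^*)^TV^*\,D + D\,(U^*)^TU^* = 0$. Setting $A := (U^*)^TU^*$, $B := (V^*)^TV^*$ and $D = A-B$, this collapses to $A^2 - B^2 = 0$. Since $A$ and $B$ are positive semidefinite, uniqueness of the positive semidefinite square root forces $A = B$, i.e. $(U^*)^TU^* = (V^*)^TV^*$ and $g(U^*,V^*) = 0$. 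Notice that this chain of manipulations uses no structural property of $G$, which is precisely why the conclusion persists for nonlinear $f_a$.

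For the equivalence I would argue at the level of optimal values and optimal points. On one side, $\rho(U,V) \ge h_a(U,V) = f_a(UV^T)$ for every $(U,V)$ and $\rank(UV^T)\le r$, so $\inf\rho \ge \inf\{f_a(M):\rank(M)\le r\}$. On the other side, for any $M$ with $\rank(M)\le r$, take an SVD $M = L\Sigma R^T$ (pad the diagonal of $\Sigma$ with zeros to size $r$) and set $U = L\Sigma^{1/2}$, $V = R\Sigma^{1/2}$; then $UV^T = M$ and $U^TU = \Sigma = V^TV$, so $\rho(U,V) = f_a(M)$ and thus $\inf\rho \le \inf\{f_a(M):\rank(M)\le r\}$. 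Hence the two optimal values coincide. Finally, a global minimizer $(U^*,V^*)$ of \eqref{eqn:asym-reg} is first-order critical, so by the first part $g(U^*,V^*)=0$ and $f_a(U^*(V^*)^T) = \rho(U^*,V^*) = \inf\rho = \inf\{f_a(M):\rank(M)\le r\}$, making $U^*(V^*)^T$ optimal for \eqref{eqn:asym-original}; conversely the balanced factorization of an optimizer of \eqref{eqn:asym-original} is an optimizer of \eqref{eqn:asym-reg}.

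The gradient computations and the SVD-based balanced factorization are routine. The real crux — and the one step where bookkeeping must be done carefully — is converting the two stationarity equations into the symmetric identity $BD + DA = 0$ via the transpose/symmetry argument, and then recognizing that together with $D = A - B$ this says exactly $A^2 = B^2$, at which point the uniqueness of the positive semidefinite square root closes the argument.
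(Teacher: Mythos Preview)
The paper does not supply its own proof of this theorem; it is quoted verbatim from \cite{zhu2018global} and used as a black box. So there is nothing in the paper to compare against, and the question reduces to whether your argument is correct.

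It is. Your gradient computations are right (with $\rho = h_a + \tfrac{\mu}{4}g$ one indeed gets $\nabla_U\rho = GV + \mu U D$ and $\nabla_V\rho = G^TU - \mu V D$), and the linear-algebra extraction is clean: left-multiplying and transposing gives $BD + DA = 0$, which with $D = A-B$ collapses to $A^2 = B^2$; uniqueness of the PSD square root then yields $A=B$. The equivalence argument via the balanced SVD factorization is the standard one and is also correct. Your proof is essentially the argument in the cited reference, and it is a plus that you note explicitly that no property of $G$ (hence no RIP, no linearity of the measurements) is used for the balancing conclusion.
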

Detailed optimality conditions for problems \eqref{eqn:sym-original}-\eqref{eqn:asym-reg} are provided in the appendix.

\subsection{Contributions}
\label{sec:contribution}

In this work, we analyze the geometric properties of problems \eqref{eqn:sym}-\eqref{eqn:asym-reg}. Novel analysis methods are developed to obtain less conservative conditions for guaranteeing benign landscapes for both problems. We note that, unlike the linear measurements case, the RIP constant of nonlinear problems may not concentrate to $0$ as the number of samples increases. Therefore, a sharper RIP bound leads to theoretical guarantees that hold under less stringent statistical requirements. In addition, even if the RIP constant concentrates to $0$ when more samples are included, there may only be a limited number of samples available, either due to the constraints of specific applications or to the great expense of taking more samples. Hence, obtaining a sharper RIP bound is essential for many applications. We summarize our results in Table \ref{tab:result}. More concretely, the contributions of this paper are three-folds. 

\begin{table}[t]
\small
\caption{Comparison of the state-of-the-art results and our results. Here $\delta_{2r,2t},\kappa$ are the RIP$_{2r,2t}$ and BDP$_{2r}$ constants of $f_s(\cdot)$ or $f_a(\cdot)$, respectively. Constant $\alpha(M^*_a)\in(0,1)$ only depends on $M^*_a$.}\label{tab:result}
  \begin{center}  
    \begin{tabular}{cccccc}
          \toprule[2pt]
          \multicolumn{2}{c}{\textbf{}} & \multicolumn{2}{c}{\textbf{No Spurious Second-order Critical Pts.}}    & \multicolumn{2}{c}{\textbf{Strict Saddle Property}}  \\
          \cline{3-6}
          \multicolumn{2}{c}{\textbf{Problem Setups}} & Existing & Ours    & Existing & Ours   \\
          \midrule[1pt]
          \multirow{2}{*}{\makecell*[c]{\textbf{Rank-$1$} \\ \textbf{Sym.} }} & \textbf{Linear}    & \makecell*[c]{$\delta_{2,2}<\frac12$\\ \citep{zhang2019sharp}}  & $\delta_{2,2}<\frac12$    & -  & -           \\
          & \textbf{Nonlinear}    & \makecell*[c]{$\delta_{2,2} < \frac{2-O(\kappa)}{4+O(\kappa)}$\\\citep{biglobal}}  & $\delta_{2,2}<\frac12$     & -  & -           \\
          \midrule[1pt]
          \makecell*[c]{\textbf{Rank-$1$} \\ \textbf{Asym.} } & \makecell*[c]{\textbf{Linear} \\ \textbf{\& Nonlinear} }    & -  & $\delta_{2,2}<\frac12$    & -  & -           \\
          \midrule[1pt]
          \multirow{2}{*}{\makecell*[c]{\textbf{Rank-$r$} \\ \textbf{Sym.} }} & \textbf{Linear}    & \makecell*[c]{$\delta_{2r,2r}<\frac15$\\ \citep{ge2016matrix}}  & $\delta_{2r,2r}\leq \frac13$    & \makecell*[c]{$\delta_{2r,2r}<\frac1{10}$\\ \citep{ge2017no}}  & $\delta_{2r,2r} < \frac13$           \\
          & \textbf{Nonlinear}    & \makecell*[c]{$\delta_{2r,4r} < \frac15$\\ \citep{li2019non}}  & $\delta_{2r,2r}\leq \frac13$            & -  & $\delta_{2r,2r}<\frac13$           \\
          \midrule[1pt]
          \multirow{2}{*}{\makecell*[c]{\textbf{Rank-$r$} \\ \textbf{Asym.} }} & \textbf{Linear}    & \makecell*[c]{$\delta_{2r,2r}<\frac13$\\ \citep{ha2020equivalence}}  & $\delta_{2r,2r}\leq \frac13$    & \makecell*[c]{$\delta_{2r,2r}<\frac1{20}$\\ \citep{ge2017no}}  & $\delta_{2r,2r} < \frac13$           \\
          & \textbf{Nonlinear}    & \makecell*[c]{$\delta_{2r,2r}<\frac13$\\ \citep{ha2020equivalence}}  & $\delta_{2r,2r}\leq \frac13$            & \makecell*[c]{$\delta_{2r,4r}<\frac{\alpha(M^*_a)}{100}$\\ \citep{zhu2021global}}  & $\delta_{2r,2r} < \frac13$           \\
          \bottomrule[2pt]
      \end{tabular}
  \end{center}
  \vspace{-2em}
\end{table}

First, we derive necessary conditions and sufficient conditions for the existence of spurious second-order critical points for both symmetric and asymmetric problems. Using our necessary conditions, we show that the $\delta$-RIP$_{2r,2r}$ property with $\delta \leq 1/3$ is enough to guarantee the non-existence of such points. This result provides a marginal improvement to the previous work \citep{ha2020equivalence}, which developed the sufficient condition $\delta<1/3$ for asymmetric problems, and is a major improvement over \cite{ge2016matrix} and \cite{li2019non}, which requires $\delta<1/5$ for symmetric problems. With this non-existence property and under some common regularity conditions, \cite{lee2016gradient,panageas2016gradient} showed that the vanilla gradient descent method with a small enough step size and a random initialization almost surely converges to a global minimum. We note that the convergence rate was not studied and could theoretically be exponential in the worst case. In addition, by studying our necessary conditions, we show that every second-order critical point has a positive correlation to the global minimum when $\delta\in(1/3,1/2)$. When $\delta = 1/2$, a counterexample with spurious second-order critical points is given by utilizing the sufficient conditions. We note that the sufficient conditions can greatly simplify the construction of counterexamples.

Second, we separately study the rank-$1$ case to further strengthen the bounds. In particular, we utilize the necessary conditions to prove that the $\delta$-RIP$_{2,2}$ property with $\delta<1/2$ is enough for the non-existence of spurious second-order critical points. Combining with a counterexample in the $\delta=1/2$ case, we conclude that the bound $\delta<1/2$ is the sharpest bound for the rank-$1$ case. Our results significantly extend the bounds in \cite{zhang2019sharp} derived for the linear symmetric case to the linear asymmetric case and the general nonlinear case. It also improves the bound in \cite{biglobal} by dropping the BDP constant. 

Third, we prove that in the exact parametrization case, problems \eqref{eqn:sym}-\eqref{eqn:asym-reg} both satisfy the strict saddle property \citep{sun2018geometric} when the $\delta$-RIP$_{2r,2r}$ property is satisfied with $\delta<1/3$. This result greatly improves the bounds in \cite{ge2017no,zhu2021global} and extends the result in \cite{ha2020equivalence} to approximate second-order critical points. With the strict saddle property and certain smoothness properties, a wide range of algorithms guarantee a global polynomial-time convergence with a random initialization; see \cite{jin2017escape,jin2018accelerated,sun2018geometric,huang2019perturbed}. Due to the special non-convex structure of our problems and the RIP property, it is possible to prove the boundedness of the trajectory of the perturbed gradient descent method using a similar method as in \cite{jin2017escape}. Since the smoothness properties are satisfied over a bounded region, combining with the strict saddle property, it follows that the perturbed gradient descent method \citep{jin2017escape} achieves a polynomial-time global convergence when $\delta<1/3$.

\subsection{Notation and organization}

The operator $2$-norm and the Frobenius norm of a matrix $M$ are denoted as $\|M\|_2$ and $\|M\|_F$, respectively. The trace of matrix $M$ is denoted as $\mathrm{tr}(M)$. The inner product between two matrices is defined as $\langle M,N\rangle:=\tr(M^TN)$. For any matrix $M\in\mathbb{R}^{n\times m}$, we denote its singular values by $\sigma_1(M) \geq \cdots \geq \sigma_k(M)$, where $k:=\min\{n,m\}$. For any symmetric matrix $M\in\mathbb{R}^{n\times n}$, we denote its eigenvalues by $\lambda_1(M) \geq \cdots \geq \lambda_n(M)$. The minimal eigenvalue is denoted as $\lambda_{min}(\cdot)$. For any matrix $U$, we use $\mathcal{P}_U$ to denote the orthogonal projection onto the column space of $U$. For any matrices $A,B\in\mathbb{R}^{n\times m}$, we use $A\otimes B$ to denote the fourth-order tensor whose $(i,j,k,\ell)$ element is $A_{i,j}B_{k,\ell}$. The identity tensor is denoted as $\mathcal{I}$. The notation $M\succeq 0$ means that the matrix $M$ is symmetric and positive semi-definite. The sub-matrix $R_{i:j,k:\ell}$ consists of the $i$-th to the $j$-th rows and the $k$-th to the $\ell$-th columns of matrix $R$. 

In Section \ref{sec:svp}, the Singular Value Projection algorithm is analyzed as an enlightening example for our main results. Sections \ref{sec:unique} and \ref{sec:strict-saddle} are devoted to the non-existence of spurious second-order critical points and the strict saddle property of the low-rank optimization problem in both symmetric and asymmetric cases, respectively.

\section{Motivating Example: Singular Value Projection Algorithm}
\label{sec:svp}

Before providing theoretical results for problems \eqref{eqn:sym}-\eqref{eqn:asym-reg}, we first consider the Singular Value Projection Method (SVP) algorithm (Algorithm \ref{alg:svp}) as a motivating example, which is proposed in \cite{jain2010guaranteed}. The SVP algorithm is basically the projected gradient method of the original low-rank problems \eqref{eqn:sym-original}-\eqref{eqn:asym-original} via the truncated SVD. For the asymmetric problem \eqref{eqn:asym-original}, the low-rank manifold is 
\[ \mathcal{M}_{asym} := \{ M \in \mathbb{R}^{n\times m}~|~ \rank(M) \leq r \} \]
and the projection is given by only keeping components corresponding to the $r$ largest singular values. For the symmetric problem \eqref{eqn:sym-original}, the low-rank manifold is
\[ \mathcal{M}_{sym} := \{ M \in \mathbb{R}^{n\times n}~|~ \rank(M) \leq r,\quad M^T=M,\quad M\succeq 0 \}. \]
We assume without loss of generality that the gradient $\nabla f(\cdot)$ is symmetric; see Appendix \ref{sec:conditions} for a discussion. The projection is given by only keeping components corresponding to the $r$ largest eigenvalues and dropping all components with negative eigenvalues. Since both low-rank manifolds are non-convex, the projection solution may not be unique and we choose an arbitrary solution when it is not unique. We note that the above projections are orthogonal in the sense that
\[ \| M_+ - M \|_F = \min_{K \in \mathcal{M}}~\|K - M\|_F, \]
where $M_+$ is the projection of a matrix $M$. Henceforth,  $\mathcal M$ stands for $\mathcal{M}_{sym}$ or $\mathcal{M}_{asym}$, which should be clear from the context. Although each truncated SVD operation can be computed within $O(nmr)$ operations, the constant hidden in the $O(\cdot)$ notation is considerably larger than $1$. Thus, the truncated SVD operation is significantly slower than matrix multiplication, which makes the SVP algorithm impractical for large-scale problems. However, the analysis of the SVP algorithm, combining with the equivalence property given in \cite{ha2020equivalence}, provides some insights into how to develop proof techniques for problems \eqref{eqn:sym}-\eqref{eqn:asym-reg}. 

We extend the proof in \cite{jain2010guaranteed} and show that Algorithm \ref{alg:svp} converges linearly to the global minimum under the $\delta$-RIP$_{2r,2r}$ property with $\delta < 1/3$.
\begin{theorem}\label{thm:svp}
If function $f_s(\cdot)$ (resp. $f_a(\cdot)$) satisfies the $\delta$-RIP$_{2r,2r}$ property with $\delta < 1/3$ and the step size is chosen to be $\eta = (1+\delta)^{-1}$, then Algorithm \ref{alg:svp} applied to problem \eqref{eqn:sym-original} (resp. \eqref{eqn:asym-original}) returns a solution $M_T$ such that $M_T \in \mathcal{M}$ and $f(M_T)-f(M^*) \leq \epsilon$ within
\[ T := \left\lceil \frac{1}{\log[(1-\delta) / (2\delta)]} \cdot \log\left[\frac{f(M_0) - f(M^*)}{\epsilon}\right] \right\rceil \]
iterations, where $f(\cdot) := f_s(\cdot)$ (resp. $f(\cdot) := f_a(\cdot)$), $M^*$ is the global minimum, $M_0$ is the initial point and $\lceil\cdot\rceil$ is the ceiling function.
\end{theorem}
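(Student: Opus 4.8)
The plan is to adapt the convergence analysis of projected gradient descent (Iterative Hard Thresholding) from \cite{jain2010guaranteed} to the general RIP setting, exploiting the RIP property as a substitute for the linear-measurement identities used in the original argument. Let $M^*$ denote the global minimum (which has rank at most $r$ by Assumption~\ref{asp:1}, and is the unique global minimum by the first theorem of \cite{zhu2018global}), and let $M_t$ be the iterate at step $t$. The SVP update is $M_{t+1} = \Pi_{\mathcal M}\bigl(M_t - \eta \nabla f(M_t)\bigr)$, where $\Pi_{\mathcal M}$ is the Euclidean projection onto the rank-$\leq r$ manifold (truncated SVD, with the PSD truncation in the symmetric case). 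I would introduce the quadratic model $Q_t(M) := f(M_t) + \langle \nabla f(M_t), M - M_t\rangle + \frac{1}{2\eta}\|M - M_t\|_F^2$, so that $M_{t+1}$ is (up to the nonconvexity of $\mathcal M$) the minimizer of $Q_t$ over $\mathcal M$; in particular $Q_t(M_{t+1}) \leq Q_t(M^*)$ since $M^* \in \mathcal M$.

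The key steps, in order, are: (i) Use the RIP$_{2r,2r}$ property to sandwich $f$ between quadratic upper and lower models along rank-$2r$ directions: since any two matrices of rank $\le r$ differ by a rank-$\le 2r$ matrix, for the relevant pairs (e.g. $M_{t+1}$ vs. $M_t$, and $M^*$ vs. $M_t$) one gets $f(M) \le f(M_t) + \langle \nabla f(M_t), M-M_t\rangle + \frac{1+\delta}{2}\|M-M_t\|_F^2$ and the analogous $(1-\delta)$ lower bound, by integrating the Hessian bound twice along the segment connecting the two points (all of whose tangent directions have rank $\le 2r$). With $\eta = (1+\delta)^{-1}$ the upper model is exactly $Q_t$, hence $f(M_{t+1}) \le Q_t(M_{t+1}) \le Q_t(M^*)$. (ii) Expand $Q_t(M^*)$ and use the lower model at $M^*$: $Q_t(M^*) = f(M_t) + \langle \nabla f(M_t), M^* - M_t\rangle + \frac{1+\delta}{2}\|M^* - M_t\|_F^2 \le f(M^*) - \frac{1-\delta}{2}\|M^*-M_t\|_F^2 + \frac{1+\delta}{2}\|M^*-M_t\|_F^2 = f(M^*) + \delta\|M^*-M_t\|_F^2$. (iii) Separately, apply the lower model between $M_{t+1}$ and $M_t$, or more directly use the lower model at $M_t$ relative to $M^*$ the other way, to bound $\|M^* - M_t\|_F^2$ by the optimality gap: from the $(1-\delta)$-strong-convexity-type lower bound at $M^*$ one gets $f(M_t) - f(M^*) \ge \frac{1-\delta}{2}\|M_t - M^*\|_F^2$ (using $\nabla f(M^*)$ restricted appropriately; since $M^*$ is a critical point of the original problem on the manifold, $\langle \nabla f(M^*), M_t - M^*\rangle \ge 0$ via the projection/optimality condition), so $\|M^*-M_t\|_F^2 \le \frac{2}{1-\delta}\bigl(f(M_t)-f(M^*)\bigr)$. (iv) Chain these: $f(M_{t+1}) - f(M^*) \le \delta\|M^*-M_t\|_F^2 \le \frac{2\delta}{1-\delta}\bigl(f(M_t)-f(M^*)\bigr)$, which is a contraction with factor $\frac{2\delta}{1-\delta} < 1$ precisely when $\delta < 1/3$. (v) Iterate the contraction and solve for the number $T$ of steps needed to reach $f(M_T)-f(M^*) \le \epsilon$, which gives exactly the stated ceiling expression with $\log[(1-\delta)/(2\delta)]$ in the denominator. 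Finally, $M_T \in \mathcal M$ is immediate since every iterate is an output of the projection.

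The main obstacle I anticipate is handling the nonconvexity of $\mathcal M$ carefully in step (iii): the inequality $\langle \nabla f(M^*), M_t - M^*\rangle \ge 0$ is not the standard convex first-order optimality condition, so one must argue via the structure of the rank-$\le r$ (and PSD) variety — e.g., that $M^*$ minimizes $f$ over $\mathcal M$ and that for the truncated-SVD projection the "Pythagorean"-type inequality $\|M_{t+1} - \widehat M\|_F^2 \le \|M_t - \eta\nabla f(M_t) - \widehat M\|_F^2 + (\text{slack})$ holds for any $\widehat M \in \mathcal M$; the classical IHT argument of \cite{jain2010guaranteed} does this for the linear case and the rank-$2r$ RIP is exactly what lets the same bookkeeping go through, because every cross term that appears involves a difference of two rank-$\le r$ matrices. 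A second, more routine point is verifying that all matrix pairs entering the Hessian integration indeed have rank at most $2r$ along the whole connecting segment — true because the segment between two rank-$\le r$ matrices lies in the linear span of a fixed rank-$\le 2r$ matrix — so that the RIP$_{2r,2r}$ (rather than a higher-rank RIP) suffices; this is the refinement over \cite{li2019non} and is worth stating explicitly. The asymmetric case is handled identically, replacing eigenvalue truncation with singular-value truncation and noting the projection is still Euclidean-orthogonal onto $\mathcal M_{asym}$.
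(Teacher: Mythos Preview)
Your proposal is correct and matches the paper's proof almost line for line: your $Q_t$ is the paper's $\phi_t + f(M_t)$, and steps (i)--(iv) reproduce the same chain of RIP-based inequalities leading to the contraction factor $\frac{2\delta}{1-\delta}$. The obstacle you flag in step (iii) vanishes under Assumption~\ref{asp:1}, which asserts that $M^*$ is a first-order critical point of the \emph{unconstrained} function (so $\nabla f(M^*)=0$), making $f(M_t)-f(M^*)\ge\frac{1-\delta}{2}\|M_t-M^*\|_F^2$ immediate from Taylor expansion at $M^*$ plus RIP---no appeal to the geometry of $\mathcal M$ is needed.
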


We note that the above proof can be applied to other low-rank optimization problems with a suitable definition of the orthogonal projection. In \cite{ha2020equivalence}, it is proved that the unique global minimum is the only fixed point of the SVP algorithm if the RIP constant $\delta$ is less than $1/3$. However, the above paper has not proven the linear convergence (as done in Theorem \ref{thm:svp}). This difference leads to a strengthened inequality in the following analysis, which further serves as an essential step in proving the strict saddle property.

%
\begin{algorithm}[t]
\caption{Singular Value Projection (SVP) Algorithm}
\label{alg:svp}
\begin{algorithmic}[1]
\Require Low-rank manifold $\mathcal{M}$, initial point $M_0$, number of iterations $T$, step size $\eta$, objective function $f(\cdot)$.
\Ensure Low-rank solution $M_T$.
\For{$t=0,\dots,T-1$}
    \State Update $\tilde{M}_{t+1} \leftarrow M_t - \eta \nabla f(M_t)$.
    \State Set $M_{t+1}$ to be the projection of $\tilde{M}_{t+1}$ onto $\mathcal{M}$ via truncated SVD. 
\EndFor
\State \textbf{return} $M_{T}$.
\end{algorithmic}
\end{algorithm}

\section{No Spurious Second-order Critical Points}
\label{sec:unique}


In this section, we develop necessary conditions and sufficient conditions for the existence of spurious second-order critical points of problems \eqref{eqn:sym}-\eqref{eqn:asym-reg}. Besides the non-existence of spurious local minima, the non-existence of spurious second-order critical points also guarantees the global convergence of many first-order algorithms with random initialization under certain regularity conditions \citep{lee2016gradient,panageas2016gradient}. More precisely, we require the iteration points of the algorithm to converge to a single point and the objective function to have a Lipschitz-continuous gradient. The first condition is satisfied by the gradient descent method applied to a large class of functions known as the {K\L}-functions \citep{attouch2013convergence}. For the second condition, many objective functions that appear in applications, e.g., the $\ell_2$-loss function, do not satisfy this condition. However, if the step size is small enough, the special non-convex structure of the Burer-Monterio decomposition and the RIP property ensure that the trajectory of the gradient descent method stays in a compact set, where the Lipschitz condition is satisfied due to the second-order continuity of the functions $f_s(\cdot)$ and $f_a(\cdot)$. The proof of this claim is similar to Theorem 8 in \cite{jin2017escape} and is omitted here. 
Therefore, the non-existence of spurious second-order critical points can ensure the global convergence of the gradient descent method for many applications. 

The non-existence of spurious second-order critical points has been proved in \cite{ge2017no,zhu2018global} for problems with linear and nonlinear measurements, respectively. Recently, \cite{ha2020equivalence} proved a relation between the second-order critical points of problem \eqref{eqn:asym} or \eqref{eqn:asym-reg} and the fixed points of the SVP algorithm on problem \eqref{eqn:asym-original}. Using this relation, they showed that problems \eqref{eqn:asym} and \eqref{eqn:asym-reg} have no spurious second-order critical points when the $\delta$-RIP$_{2r,2r}$ property is satisfied with $\delta < 1/3$. 
In this work, we take a different approach to show that $\delta\leq 1/3$ is enough for the general case in both symmetric and asymmetric scenarios, and that $\delta < 1/2$ is enough for the rank-$1$ case. Moreover, we prove that there exists a positive correlation between every second-order critical point and the global minimum when $\delta\in(1/3,1/2)$. We also show that there may exist spurious second-order critical points when $\delta = 1/2$ for both the symmetric and asymmetric problems, which extends the construction of such examples for the linear symmetric rank-$1$ problem in \cite{zhang2018much} to general cases.
We first give necessary conditions and sufficient conditions for the existence of spurious second-order critical points below.
\begin{theorem}\label{thm:nes-suf}
Let $\ell := \min\{m,n,2r\}$. For a given $\delta \in [0,1)$, there exists a function $f_a(\cdot)$ with the $\delta$-RIP$_{2r,2r}$ property such that problems \eqref{eqn:asym} and \eqref{eqn:asym-reg} have a spurious second-order critical point only if there exist a constant $\alpha \in (1-\delta,(1+\delta)/2]$, a diagonal matrix $\Sigma\in\mathbb{R}^{r\times r}$, a diagonal matrix $\Lambda \in\mathbb{R}^{(\ell-r)\times (\ell-r)}$ and matrices $A\in\mathbb{R}^{r\times r},B\in\mathbb{R}^{r\times r},C\in\mathbb{R}^{(\ell-r)\times r},D\in\mathbb{R}^{(\ell-r)\times r}$ such that
\begin{align}\label{eqn:rank-1-thm}
    \nonumber&(1+\delta) \min_{1\leq i\leq r} \Sigma_{ii} \geq \max_{1\leq i\leq \ell-r} \Lambda_{ii},\quad \Sigma \succ 0,~ \Lambda \succeq 0,\\
    &\langle \Lambda, CD^T \rangle = \alpha \left[ \tr(\Sigma^2) - 2\langle \Sigma, AB^T \rangle + \|AB^T\|_F^2 + \|AD^T\|_F^2 + \|CB^T\|_F^2 + \|CD^T\|_F^2 \right],\\
    \nonumber&\tr(\Lambda^2) \leq \alpha^{-1}( 2\alpha - 1 + \delta^2 ) \cdot \langle \Lambda, CD^T \rangle, \quad \langle \Lambda, CD^T \rangle \neq 0.
\end{align}
If $CB^T=0$ and $AD^T=0$, then there exists a function $f_a(\cdot)$ with the $\delta$-RIP$_{2r,2r}$ property such that problems \eqref{eqn:asym} and \eqref{eqn:asym-reg} have a spurious second-order critical point.
\end{theorem}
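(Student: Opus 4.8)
\section*{Proof Proposal for Theorem \ref{thm:nes-suf}}

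\textbf{Overview of the strategy.} The statement has two parts: a \emph{necessary} direction (existence of a bad $f_a$ implies the algebraic system \eqref{eqn:rank-1-thm} is feasible) and a \emph{sufficient} direction (feasibility of the system, plus $CB^T=0$, $AD^T=0$, yields a bad $f_a$). The plan is to prove the necessary direction first, since it tells us exactly what normal form a spurious second-order critical point must take, and then reverse-engineer the construction for the sufficient direction from that normal form. The key reduction is that, by the equivalence results cited in the excerpt (in particular the relation between second-order critical points of \eqref{eqn:asym}/\eqref{eqn:asym-reg} and fixed points of the SVP algorithm, and the fact that $\delta$-RIP$_{2r,2r}$ is all that is used), we only need to track the behaviour of $\nabla f_a$ and $\nabla^2 f_a$ along a handful of low-rank directions, and the RIP assumption lets us replace the Hessian by $(1\pm\delta)$-perturbations of the identity tensor on those directions.

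\textbf{The necessary direction.} Suppose $(U,V)$ is a spurious second-order critical point of \eqref{eqn:asym-reg}. First use the regularization result (Theorem~3 in the excerpt) to get $U^TU=V^TV$, so $U,V$ share singular values; write the SVD-type decomposition and perform a simultaneous orthogonal change of basis on $\mathbb{R}^n,\mathbb{R}^m$ and on the $\mathbb{R}^r$ factor so that the ``signal part'' of $M^*_a = U^*(V^*)^T$ is diagonalized. In these coordinates the ground truth and the critical-point matrix $M=UV^T$ live in a $\min\{m,n,2r\}=\ell$ dimensional block; call its leading $r\times r$ diagonal block (the overlap with $M^*$) $\Sigma$ and the residual block $\Lambda$. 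The matrices $A,B,C,D$ arise as the blocks of $U^*,V^*$ expressed in the column/row spaces of $U$ and its orthogonal complement (so $AB^T$ is the ``in-span'' part of $M^*$ and $CD^T$ the ``out-of-span'' part). The first-order condition $\nabla h_a(U,V)=0$ plus the regularizer gives the two identities relating $\langle \Lambda, CD^T\rangle$ to the squared-norm combination in the middle line of \eqref{eqn:rank-1-thm} — here $\alpha$ is essentially the (RIP-distorted) ``effective curvature'' that the gradient equation forces, and RIP pins it to $(1-\delta,(1+\delta)/2]$. The second-order condition, tested against the rank-$\le 2r$ direction that rotates $M$ toward $M^*$, produces the inequality $\tr(\Lambda^2)\le \alpha^{-1}(2\alpha-1+\delta^2)\langle\Lambda,CD^T\rangle$ after one applies the RIP upper/lower bounds to the curvature terms; the eigenvalue-ordering constraint $(1+\delta)\min\Sigma_{ii}\ge\max\Lambda_{ii}$ comes from the requirement that $M$ be the top-$r$ truncated SVD of $M-\eta\nabla f$ (i.e., the fixed-point characterization), and $\Sigma\succ 0$, $\langle\Lambda,CD^T\rangle\ne 0$ encode that the point is genuinely spurious (nonzero and not equal to $M^*$).

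\textbf{The sufficient direction.} Given a feasible tuple $(\alpha,\Sigma,\Lambda,A,B,C,D)$ with $CB^T=0$ and $AD^T=0$, I will build $f_a$ explicitly as a \emph{linear} (least-squares) objective: choose $M^*_a$ to be the rank-$\le r$ matrix with the prescribed $\Sigma,A,B,C,D$ blocks, let $M$ be the candidate spurious point assembled from $\Sigma,\Lambda$, and then design the linear measurement operator $\mathcal{A}$ so that (i) $\mathcal{A}(M^*_a)=b$, making $M^*_a$ a global minimum with value $0$; (ii) $f_a$ satisfies $\delta$-RIP$_{2r,2r}$, which for a least-squares objective means $\mathcal{A}^*\mathcal{A}$ acts as $(1\pm\delta)$-isometry on rank-$\le 2r$ matrices — this is arranged by taking $\mathcal{A}^*\mathcal{A}$ to be the identity tensor $\mathcal{I}$ plus a correction supported on the relevant two-dimensional family of rank-$\le 2r$ directions, with correction magnitude exactly $\delta$ in the worst direction; (iii) the gradient $\nabla f_a(M) = \mathcal{A}^*\mathcal{A}(M-M^*_a)$ has its column and row spaces aligned with those of $M$, which is the first-order condition for \eqref{eqn:asym}, and the regularizer vanishes because the construction keeps $U^TU=V^TV$; (iv) the Hessian, restricted to the tangent directions at $(U,V)$, is PSD — this is where the inequality $\tr(\Lambda^2)\le\alpha^{-1}(2\alpha-1+\delta^2)\langle\Lambda,CD^T\rangle$ and the eigenvalue ordering are used to certify second-order criticality. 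The conditions $CB^T=0$, $AD^T=0$ are what make this construction consistent: they kill the cross terms in the curvature computation so that the single scalar inequality in \eqref{eqn:rank-1-thm} is \emph{exactly} the condition needed, with no residual obstruction, allowing $\mathcal{A}$ to be chosen in closed form.

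\textbf{Main obstacle.} The hard part is step (ii)--(iv) of the sufficient direction: producing a single linear operator $\mathcal{A}$ that simultaneously realizes the global minimum at $M^*_a$, obeys the two-sided RIP bound on \emph{all} rank-$\le 2r$ directions (not just the ones we care about), and makes the Hessian at $(U,V)$ exactly PSD with the prescribed null direction. Getting RIP globally while forcing specific curvature values on a chosen subspace requires carefully splitting the space of rank-$\le 2r$ perturbations into the ``active'' low-dimensional family (spanned by $M$, $M^*_a$, and the rotation direction between them) and its complement, defining $\mathcal{A}^*\mathcal{A}$ block-diagonally with respect to this split, and verifying the eigenvalues of the active block lie in $[1-\delta,1+\delta]$ — which is precisely where all three constraints of \eqref{eqn:rank-1-thm} get consumed. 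I expect the bookkeeping of these blocks, and checking that $CB^T=0$, $AD^T=0$ really do decouple them, to be the most delicate part; the first-order and regularizer conditions, by contrast, are routine once the coordinates are fixed.
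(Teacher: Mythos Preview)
Your overall architecture (normal form for necessity, explicit quadratic construction for sufficiency) matches the paper's, but the necessity direction contains two genuine gaps that would derail the argument.

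\textbf{What $\Sigma$ and $\Lambda$ actually are.} You describe $\Sigma$ as the ``overlap with $M^*$'' and $\Lambda$ as the ``residual block'' of the critical-point matrix. In the paper's proof, $\Sigma$ is simply the diagonal of singular values of the spurious point $\tilde M=UV^T$ in its own SVD basis, and $\Lambda$ is \emph{not a block of $\tilde M$ or $M^*$ at all}: it is the nontrivial block of the projected gradient
\[
G := \mathcal{P}_U \,\nabla f_a(\tilde M)\, \mathcal{P}_V,
\]
which, by the first-order condition $\nabla f_a(\tilde M)V=0$, $U^T\nabla f_a(\tilde M)=0$, lives entirely in the orthogonal complements of the column and row spaces of $\tilde M$. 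With your identification, $\tr(\Lambda^2)$ would be a piece of $\|\tilde M\|_F^2$; in the paper it is $\|G\|_F^2$, and the whole third-line inequality is a bound on the size of the gradient, not on $\tilde M$.

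\textbf{Where the third-line inequality comes from.} You attribute the bound $\tr(\Lambda^2)\le\alpha^{-1}(2\alpha-1+\delta^2)\langle\Lambda,CD^T\rangle$ to ``the second-order condition, tested against the direction that rotates $M$ toward $M^*$''. The paper does not obtain it this way. Instead it forms the integrated Hessian tensor
\[
K:\mathcal H:L \;=\; \int_0^1 [\nabla^2 f_a(M^*+s(\tilde M-M^*))](K,L)\,ds,
\]
notes that $\nabla f_a(\tilde M)=\mathcal H:(\tilde M-M^*)$, sets $\tilde G:=G-\alpha(\tilde M-M^*)$ (orthogonal to $\tilde M-M^*$), and applies the $\delta$-RIP bounds to the one-parameter family $K(t)=t(\tilde M-M^*)+\tilde G$. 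This yields two quadratic inequalities in $t$ that must hold for all $t\in\mathbb R$; their nonnegativity forces discriminant conditions which, after summing and simplifying, give exactly $\|\tilde G\|_F^2\le[\delta^2-(1-\alpha)^2]\|\tilde M-M^*\|_F^2$, i.e.\ the third line. The second-order optimality condition of \eqref{eqn:asym} is used elsewhere --- via the SVP fixed-point characterization --- to produce the \emph{first}-line constraint $(1+\delta)\sigma_r(\tilde M)\ge\|\nabla f_a(\tilde M)\|_2\ge\|G\|_2$ and the upper bound $\alpha\le(1+\delta)/2$; it does not by itself produce the $\tr(\Lambda^2)$ bound. A single test direction in the Hessian of $h_a$ will not recover the quadratic-in-$\alpha$ constant $2\alpha-1+\delta^2$; that constant is precisely the output of the discriminant analysis.

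Your sufficiency sketch is close to the paper's: they too build a quadratic $f_a(K)=\tfrac12(K-M^*):\mathcal H:(K-M^*)$ with $\mathcal H$ equal to a designed rank-two tensor on $\mathrm{span}\{\tilde M-M^*,\tilde G\}$ and $(1+\delta)\mathcal I$ on the orthogonal complement. The role of $CB^T=AD^T=0$ is exactly what you guessed --- it forces $M^*$ to be block-diagonal in the $\tilde M$ basis, which makes $\langle M^*,\Delta_2\rangle=0$ for the ``out-of-span'' part of any test direction and lets the second-order verification close. But note that the third-line inequality is consumed in checking that $\mathcal H$ satisfies RIP on the two-dimensional active subspace, while the \emph{first}-line eigenvalue ordering is what certifies the second-order optimality condition; you have these two roles swapped.
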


The original problem of the non-existence of spurious second-order critical points can be viewed as a property of the set of functions satisfying the RIP property, which is a convex set in an infinite-dimensional functional space. The conditions in \eqref{eqn:rank-1-thm} reduce the infinite-dimensional problem to a finite-dimensional problem by utilizing the optimality conditions and the RIP property, which provides a basis of solving these conditions numerically. We note that the conditions in the third line of \eqref{eqn:rank-1-thm} are novel and serve as an important step in developing strong theoretical guarantees. Although the conditions in \eqref{eqn:rank-1-thm} seem complicated, they lead to strong results on the non-existence of spurious second-order critical points. We provide two corollaries below to illustrate the power of the above theorem. The first corollary focuses on the rank-$1$ case. In this case, we can simplify condition \eqref{eqn:rank-1-thm} through suitable relaxations to obtain a sharper bound on $\delta$ that ensures the non-existence of spurious second-order critical points.
\begin{corollary}\label{cor:rank-1}
Consider the case $r=1$, and suppose that the function $f_a(\cdot)$ satisfies the $\delta$-RIP$_{2,2}$ property with $\delta < 1/2$. Then, problems \eqref{eqn:asym} and \eqref{eqn:asym-reg} have no spurious second-order critical points.
\end{corollary}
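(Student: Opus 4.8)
The plan is to derive Corollary~\ref{cor:rank-1} by specializing the necessary conditions of Theorem~\ref{thm:nes-suf} to the rank-$1$ case and showing that system~\eqref{eqn:rank-1-thm} becomes infeasible as soon as $\delta<1/2$. Setting $r=1$ makes $\Sigma$ a positive scalar $\sigma>0$, while $A,B$ become scalars $a,b$, and $\Lambda$ is a diagonal $(\ell-1)\times(\ell-1)$ matrix with nonnegative entries $\Lambda_{ii}=\lambda_i\ge 0$, with $C,D$ being vectors $c,d\in\mathbb{R}^{\ell-1}$. The key identities to track are: the bracket on the right of the second line of~\eqref{eqn:rank-1-thm} becomes $\sigma^2 - 2\sigma ab + a^2b^2 + a^2\|d\|^2 + b^2\|c\|^2 + \|c\|^2\|d\|^2 = (\sigma-ab)^2 + (a^2+\|c\|^2)\|d\|^2 + b^2\|c\|^2$ — call this quantity $\Gamma\ge 0$; the eigenvalue bound reads $(1+\delta)\sigma \ge \max_i\lambda_i$; and the coupling reads $\langle\Lambda,cd^T\rangle = \sum_i \lambda_i c_i d_i = \alpha\Gamma$, with $\alpha\in(1-\delta,(1+\delta)/2]$ and $\langle\Lambda,cd^T\rangle\ne 0$. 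The last line gives $\sum_i\lambda_i^2 \le \alpha^{-1}(2\alpha-1+\delta^2)\langle\Lambda,cd^T\rangle = (2\alpha-1+\delta^2)\Gamma$.

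The core of the argument is to bound $\langle\Lambda,cd^T\rangle$ both above and below in a way that conflicts with the last inequality. For the upper bound, I would use Cauchy--Schwarz: $\langle\Lambda,cd^T\rangle = \sum_i \lambda_i c_i d_i \le \sqrt{\sum_i \lambda_i^2 c_i^2}\,\sqrt{\sum_i d_i^2}$, or more usefully combine $|\sum_i\lambda_i c_i d_i| \le (\max_i\lambda_i)\,|\langle c,d\rangle| \le (\max_i\lambda_i)\|c\|\|d\| \le (1+\delta)\sigma\|c\|\|d\|$. Then note $\Gamma \ge (a^2+\|c\|^2)\|d\|^2 \ge \|c\|^2\|d\|^2$ and $\Gamma \ge 2|\sigma-ab|\cdot\text{(something)}$; actually the cleanest route is: since $\Gamma \ge \|c\|^2\|d\|^2$ and also $\Gamma \ge (\sigma - ab)^2$, one gets $\langle\Lambda,cd^T\rangle \le (1+\delta)\sigma\|c\|\|d\| \le (1+\delta)\sigma\sqrt{\Gamma}$. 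On the other hand, from $\sum_i\lambda_i^2 \le (2\alpha-1+\delta^2)\Gamma$ and the fact that $\langle\Lambda,cd^T\rangle = \alpha\Gamma$ while each $\lambda_i\le(1+\delta)\sigma$, I would derive a lower bound on $\Gamma$ in terms of $\sigma$. Specifically $\alpha\Gamma = \sum_i\lambda_i c_i d_i$ is controlled by $\sqrt{\sum_i\lambda_i^2}$ which is at most $\sqrt{2\alpha-1+\delta^2}\sqrt{\Gamma}$; combined with $|c_id_i|$ estimates and $\Gamma$'s lower bounds this forces $\alpha\sqrt{\Gamma}\le \sqrt{2\alpha-1+\delta^2}\cdot\|c\|\|d\|/\|c\|\|d\| \cdot$ (something), leading eventually to $\alpha^2 \le 2\alpha - 1 + \delta^2$, i.e. $(\alpha-1)^2 \le \delta^2$, i.e. $1-\delta \le \alpha$ (consistent) \emph{and} $\alpha \le 1+\delta$; this alone is not a contradiction, so a sharper combination is needed — pairing the eigenvalue constraint $\max_i\lambda_i\le(1+\delta)\sigma$ with the structure $\Gamma\ge(\sigma-ab)^2$ is what should tip it over.

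The main obstacle I anticipate is finding the exact chain of inequalities that collapses the multi-parameter system into a one-line contradiction when $\delta<1/2$, since the naive Cauchy--Schwarz bounds only recover $|\alpha-1|\le\delta$, which the problem already permits. The trick is presumably to exploit that $\alpha \le (1+\delta)/2$, so $2\alpha - 1 + \delta^2 \le \delta + \delta^2 = \delta(1+\delta)$, and simultaneously that $\langle\Lambda,cd^T\rangle=\alpha\Gamma$ is a \emph{fixed fraction} of $\Gamma$ rather than merely bounded by it; substituting into $\sum\lambda_i^2\le\delta(1+\delta)\Gamma$ and using $\Gamma\le \alpha^{-1}(\max_i\lambda_i)\|c\|\|d\|$ together with $\Gamma\ge\|c\|^2\|d\|^2$ should give $\|c\|\|d\|\le\alpha^{-1}(1+\delta)\sigma$ times a factor, and then feeding $\Gamma\ge(\sigma-ab)^2$ plus an optimization over $ab$ finishes it. Once the scalar inequality is isolated, I would verify it has no solution for $\alpha\in(1-\delta,(1+\delta)/2]$ when $\delta<1/2$ (the boundary $\delta=1/2$ being exactly where a solution first appears, matching the counterexample mentioned in the text), and conclude by contraposition that no spurious second-order critical point exists.
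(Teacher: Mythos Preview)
Your setup is correct and your overall strategy---specialize Theorem~\ref{thm:nes-suf} to $r=1$ and show the necessary conditions~\eqref{eqn:rank-1-thm} are infeasible when $\delta<1/2$---is exactly the paper's. But the argument is not completed: you explicitly flag that the naive Cauchy--Schwarz bounds only recover $|\alpha-1|\le\delta$, and then only speculate about what might sharpen it. The paper fills this gap with two concrete moves you have not found.

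First, observe that $\ell=\min\{m,n,2\}=2$ (the case $\ell=1$ is vacuous), so $\ell-r=1$ and $\Lambda,C,D$ are \emph{scalars} $\lambda,c,d$, not a diagonal matrix and two vectors. You never need Cauchy--Schwarz over an index $i$, and your intermediate step $|\sum_i\lambda_ic_id_i|\le(\max_i\lambda_i)\,|\langle c,d\rangle|$ is in fact false in general, though harmless here since there is only one term. Second, with everything scalar, the paper \emph{minimizes} the bracket over $x:=|ab|$ (a quadratic in $x$), obtaining
\[
cd\cdot\lambda \ \ge\ \alpha\bigl[\sigma^2 - (\sigma-|cd|)_+^2 + (cd)^2\bigr],
\]
and then splits into the cases $\sigma\ge|cd|$ and $\sigma<|cd|$. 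In the first case this yields $\lambda\ge 2\alpha\sigma$, which combined with $\lambda\le\alpha^{-1}(2\alpha-1+\delta^2)\,cd\le\alpha^{-1}(2\alpha-1+\delta^2)\,\sigma$ forces $\delta^2\ge 2\alpha^2-2\alpha+1\ge 1/2$, a contradiction. In the second case one feeds \emph{both} remaining constraints---the eigenvalue bound $(1+\delta)\sigma\ge\lambda$ and the trace bound---into the inequality $cd\cdot\lambda\ge\alpha[\sigma^2+(cd)^2]$, sets $z:=cd/\sigma$, and reduces to a single inequality in $(\alpha,\delta)$ whose discriminant analysis gives $4(2\delta-1)(\delta+1)^4\ge 0$, again contradicting $\delta<1/2$. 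Your proposal contains neither the optimization over $ab$ nor the case split, and without them the system retains exactly the slack you observed.
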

The following example shows that the counterexample in \cite{zhang2019sharp} designed for the symmetric case also works for the asymmetric rank-$1$ case.
\begin{example}\label{exm:2}
We note that Example 12 in \cite{zhang2019sharp} shows that problem \eqref{eqn:sym} may have spurious second-order critical points when $\delta = 1/2$. In general, a second-order critical point for problem \eqref{eqn:sym} is not a second-order critical point for problem \eqref{eqn:asym-reg}, since the asymmetric manifold $\mathcal{M}_{asym}$ has a larger second-order critical cone than the symmetric manifold $\mathcal{M}_{sym}$. However, it can be verified that the same example also has a spurious second-order critical point in the asymmetric case. For completeness, we verify the claim in the appendix.
\end{example}
It follows from Corollary \ref{cor:rank-1} and Example \ref{exm:2} that the bound $1/2$ is the \emph{sharpest} bound for the rank-$1$ asymmetric case. The next corollary provides a marginal improvement to the state-of-the-art result for the general rank case, which derives the RIP bound $\delta < 1/3$ \citep{ha2020equivalence}. In addition, we prove that there exists a positive correlation between every second-order critical point and the global minimum when $\delta < 1/2$.
\begin{corollary}\label{cor:rank-r}
Given an arbitrary $r$, suppose that the function $f_a(\cdot)$ satisfies the $\delta$-RIP$_{2r,2r}$ property. If $\delta \leq 1/3$, then both problems \eqref{eqn:asym} and \eqref{eqn:asym-reg} have no spurious second-order critical points. In addition, if $\delta \in[0, 1/2)$, then every second-order critical point $\tilde{M}$ has a positive correlation with the ground truth $M^*_a$. Namely, there exists a universal function $C(\delta) : (0,1/2) \mapsto (0,1]$ such that 
\[ \langle \tilde{M},M^*_a \rangle \geq C(\delta) \cdot \|\tilde{M}\|_F\|M^*_a\|_F. \]
\end{corollary}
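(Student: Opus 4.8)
The plan is to derive both parts of the corollary directly from the necessary condition~\eqref{eqn:rank-1-thm} of Theorem~\ref{thm:nes-suf}. Suppose, for contradiction in the first part (or for the purpose of bounding the correlation in the second part), that there is a spurious second-order critical point; then there exist $\alpha\in(1-\delta,(1+\delta)/2]$, diagonal $\Sigma\succ 0$, diagonal $\Lambda\succeq 0$, and matrices $A,B,C,D$ satisfying all three lines of~\eqref{eqn:rank-1-thm}. The key quantity to control is $\langle\Lambda,CD^T\rangle$, which the second line expresses in terms of the ``energy'' $E:=\tr(\Sigma^2)-2\langle\Sigma,AB^T\rangle+\|AB^T\|_F^2+\|AD^T\|_F^2+\|CB^T\|_F^2+\|CD^T\|_F^2$, while the third line gives the reverse bound $\tr(\Lambda^2)\le\alpha^{-1}(2\alpha-1+\delta^2)\langle\Lambda,CD^T\rangle$ together with $\langle\Lambda,CD^T\rangle\neq 0$.

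First I would bound $\langle\Lambda,CD^T\rangle$ from above by a Cauchy--Schwarz / Frobenius-inner-product estimate: since $\Lambda\succeq 0$ is diagonal and $\mathrm{rank}(CD^T)\le r$, one has $\langle\Lambda,CD^T\rangle=\langle\Lambda,\,\text{(diagonal block of }CD^T)\rangle\le \|\Lambda\|_F\cdot\|CD^T\|_F\le\|\Lambda\|_F\cdot\|C\|_F\|D\|_F$, and more usefully $\langle\Lambda,CD^T\rangle\le \tfrac12\|\Lambda\|_F(\|C\|_F^2+\|D\|_F^2)$ after an AM--GM step, which can be absorbed into $E$ since $\|CD^T\|_F^2,\|CB^T\|_F^2,\dots$ appear there. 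Combining this with the third-line inequality $\tr(\Lambda^2)=\|\Lambda\|_F^2\le\alpha^{-1}(2\alpha-1+\delta^2)\langle\Lambda,CD^T\rangle$, I obtain a quadratic inequality in $\|\Lambda\|_F$ that forces $2\alpha-1+\delta^2>0$, i.e. $\alpha>(1-\delta^2)/2$; I would then check whether this is compatible with $\alpha\le(1+\delta)/2$ and with the positivity of $\langle\Lambda,CD^T\rangle$. The threshold $\delta=1/3$ should emerge precisely as the point where the window $\big((1-\delta^2)/2,\,(1+\delta)/2\big]$ for $\alpha$, intersected with the constraint coming from the first line $(1+\delta)\min\Sigma_{ii}\ge\max\Lambda_{ii}$ and the sign/magnitude relations between $E$ and $\langle\Lambda,CD^T\rangle$, becomes empty; for $\delta\le 1/3$ I expect to close the contradiction, proving no spurious second-order critical point. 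For the correlation statement when $\delta\in(1/3,1/2)$, instead of seeking a contradiction I would track how $\|\Lambda\|_F^2$ (the ``misalignment energy'' of $\tilde M$ relative to $M^*_a$) is bounded by a multiple of the total energy $E$ that shrinks as $\delta\to 1/3^+$, and convert this into a lower bound $\langle\tilde M,M^*_a\rangle\ge C(\delta)\|\tilde M\|_F\|M^*_a\|_F$ by writing $\tilde M=UV^T$ in the $\Sigma,\Lambda,A,B,C,D$ coordinates and expanding the inner product; the function $C(\delta)$ will be an explicit algebraic expression in $\delta$ that tends to $1$ as $\delta\to 1/3$ and stays positive on $(1/3,1/2)$, and one checks $C(\delta)\in(0,1]$ directly.

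The main obstacle I anticipate is the bookkeeping that relates the six-term energy $E$, the off-diagonal coupling $\langle\Lambda,CD^T\rangle$, and the diagonal-dominance constraint in the first line of~\eqref{eqn:rank-1-thm}: these are not independent, and a naive Cauchy--Schwarz bound loses too much to reach $\delta\le 1/3$ rather than a weaker threshold like $\delta<1/5$. The subtlety is to use the cross terms $\|AD^T\|_F^2$ and $\|CB^T\|_F^2$ in $E$ efficiently — they are ``wasted'' in the correlation bound but help in the non-existence proof — and to exploit that $\Sigma\succ 0$ with $(1+\delta)\min_i\Sigma_{ii}\ge\max_i\Lambda_{ii}$ pins down the scale of $\Lambda$ relative to $\Sigma$, which is what ultimately converts the third-line inequality into an infeasible constraint on $\alpha$. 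I would handle this by first treating the extreme case $A=\Sigma^{1/2}R$, $C=0$ or $D=0$ type configurations to locate the tight instance, then verifying the general bound reduces to it.
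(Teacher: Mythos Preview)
Your plan for the first part misses the one-line argument and then heads in a direction that cannot close the gap. The necessary condition of Theorem~\ref{thm:nes-suf} already forces $\alpha\in(1-\delta,(1+\delta)/2]$; this interval is nonempty only when $1-\delta<(1+\delta)/2$, i.e.\ $\delta>1/3$, so for $\delta\le 1/3$ no such $\alpha$ exists and you are done immediately. Your derived constraint $\alpha>(1-\delta^2)/2=(1-\delta)(1+\delta)/2$ is strictly \emph{weaker} than the constraint $\alpha>1-\delta$ that is already given, so it contributes nothing; and the Cauchy--Schwarz / AM--GM bounds you propose on $\langle\Lambda,CD^T\rangle$ do not interact with the first-line condition sharply enough to recover $\delta\le 1/3$ --- that is exactly why earlier papers were stuck at $1/5$.

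For the correlation bound when $\delta\in(1/3,1/2)$, your outline does not isolate the right quantities. The inner product you must control is $\langle\tilde M,M^*_a\rangle=\langle\Sigma,AB^T\rangle$, whereas your plan focuses on bounding $\|\Lambda\|_F^2$ by $E$; that alone does not produce a lower bound on $\langle\Sigma,AB^T\rangle$. The paper instead sets $m_{11}=\|\tilde M\|_F^2$, $m_{12}=\langle\tilde M,M^*_a\rangle$, $m_{22}=\|M^*_a\|_F^2$, and obtains a \emph{lower} bound on $\|\Lambda\|_F^2$ by orthogonally projecting the $\Lambda$-block onto the span of the $\Sigma$-block and the $R$-block (i.e.\ onto $\mathrm{span}\{\tilde M,M^*_a\}$). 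Combining this lower bound with the second and third lines of~\eqref{eqn:rank-1-thm} and with the first-line bound $\|\Lambda\|_F^2\le(1+\delta)^2\|\Sigma\|_F^2$ yields two inequalities of the form
\[
\frac{(m_{11}+m_{22}-2m_{12})^2}{m_{11}m_{22}-m_{12}^2}\le\frac{(1+\delta)^2}{\alpha^2},\qquad
\frac{(m_{11}-m_{12})^2}{m_{11}m_{22}-m_{12}^2}\le\frac{\delta^2-(1-\alpha)^2}{\alpha^2},
\]
from which a positive lower bound on $\kappa:=m_{12}/\sqrt{m_{11}m_{22}}$ follows by elementary (but somewhat delicate) algebra in $\kappa$ and $t:=\sqrt{m_{11}/m_{22}}$, using $\delta<1/2$. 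The key step you are missing is this projection that converts information about $\Lambda$ into a relation among $m_{11},m_{12},m_{22}$; a mere upper bound on $\|\Lambda\|_F^2$ in terms of $E$ goes the wrong way.
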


For the general rank-$r$ case, we construct a counterexample with spurious second-order critical points when $\delta = 1/2$.
\begin{example}

Let $n = m = 2r$. Now, we use the sufficiency part of Theorem \ref{thm:nes-suf} to construct a counterexample. We choose
\begin{align*}
    \delta := \frac12,\quad \alpha := \frac35,\quad \Sigma := \frac12 I_r,\quad \Lambda := \frac34 I_{r},\quad A = B := 0_r,\quad C = D := I_{r}.
\end{align*}
It can be verified that the conditions in \eqref{eqn:rank-1-thm} are satisfied and $CB^T=AD^T=0$, which means that there exists a function $f_a(\cdot)$ satisfying the $\delta$-RIP$_{2r,2r}$ property for which problems \eqref{eqn:asym} and \eqref{eqn:asym-reg} have spurious second-order critical points. We also give a direct construction with linear measurements in the appendix. This example illustrates that Theorem \ref{thm:nes-suf} can be used to systematically design instances of the problem with spurious second-order critical points.
\end{example}
Before closing this section, we note that similar conditions can be obtained for the symmetric problem \eqref{eqn:sym}. Although there exists a natural transformation of symmetric problems to asymmetric problems (see the appendix), the approach requires the objective function $f_s(\cdot)$ to have the $\delta$-RIP$_{4r,2r}$ property, which provides sub-optimal RIP bounds compared to a direct analysis. We give the results of the direct analysis below and omit the proof due to the similarity to the asymmetric case.
\begin{theorem}
Let $\ell := \min\{n,2r\}$. For a given $\delta \in [0,1)$, there exists a function $f_s(\cdot)$ with the $\delta$-RIP$_{2r,2r}$ property such that problem \eqref{eqn:sym} has a spurious second-order critical point only if there exist a constant $\alpha \in (1-\delta,(1+\delta)/2]$, a diagonal matrix $\Sigma\in\mathbb{R}^{r\times r}$, a diagonal matrix  $ \Lambda \in\mathbb{R}^{(\ell-r)\times (\ell-r)}$ and matrices $A\in\mathbb{R}^{r\times r},C\in\mathbb{R}^{(\ell-r)\times r}$ such that
\begin{align}\label{eqn:rank-1-thm-sym}
    \nonumber&(1+\delta) \min_{1\leq i\leq r} \Sigma_{ii} \geq \max_{1\leq i\leq \ell-r} \Lambda_{ii},\quad \Sigma \succ 0,\\
    &\langle \Lambda, CC^T \rangle = \alpha \left[ \tr(\Sigma^2) - 2\langle \Sigma, AA^T \rangle + \|AA^T\|_F^2 + 2\|AC^T\|_F^2 + \|CC^T\|_F^2 \right],\\
    \nonumber&\tr(\Lambda^2) \leq \alpha^{-1}( 2\alpha - 1 + \delta^2 ) \cdot \langle \Lambda, CC^T \rangle, \quad \langle \Lambda, CC^T \rangle \neq 0.
\end{align}
If $AC^T=0$, then there exists a function $f_s(\cdot)$ with the $\delta$-RIP$_{2r,2r}$ property for which problem \eqref{eqn:sym} has a spurious second-order critical point.
\end{theorem}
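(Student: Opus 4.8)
The plan is to carry out the direct analysis announced before the statement by mirroring the proof of Theorem~\ref{thm:nes-suf}, with the single factor $U$ playing the roles of both $U$ and $V$; the constraint $V=U$ specializes the parametrization of the asymmetric proof to $B=A$, $D=C$, which is precisely what turns \eqref{eqn:rank-1-thm} into \eqref{eqn:rank-1-thm-sym}. For the necessity direction, suppose some $f_s(\cdot)$ with the $\delta$-RIP$_{2r,2r}$ property admits a spurious second-order critical point $\tilde U\in\mathbb{R}^{n\times r}$ of \eqref{eqn:sym}; set $\tilde M:=\tilde U\tilde U^T$, $S:=\nabla f_s(\tilde M)$ (symmetric without loss of generality), and write the rank-$\le r$ global minimum as $M^*_s=ZZ^T$. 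The first- and second-order conditions for $h_s(U)=f_s(UU^T)$ read $S\tilde U=0$ and
\[
\bigl[\nabla^2 f_s(\tilde M)\bigr]\bigl(\tilde U\dot U^T+\dot U\tilde U^T,\ \tilde U\dot U^T+\dot U\tilde U^T\bigr)+2\langle S,\dot U\dot U^T\rangle\ \ge\ 0\qquad\forall\,\dot U\in\mathbb{R}^{n\times r}.
\]
A first observation is that $\langle S,\tilde M\rangle=\tr(\tilde U^TS\tilde U)=0$, so, combining the first-order criticality of $M^*_s$ ($\nabla f_s(M^*_s)\succeq0$, $\nabla f_s(M^*_s)M^*_s=0$) with the $\delta$-RIP lower bound along the rank-$\le2r$ direction $\tilde M-M^*_s$, one gets $\langle S,M^*_s\rangle=-\langle S,\tilde M-M^*_s\rangle\le-(1-\delta)\|\tilde M-M^*_s\|_F^2<0$. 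This already excludes the rank-deficient case: if $\mathrm{rank}(\tilde U)<r$, a gauge transformation $\tilde U\mapsto\tilde UR$ exposes a zero column, the corresponding $\dot U$ forces $S\succeq0$, contradicting $\langle S,M^*_s\rangle<0$ and $M^*_s\succeq0$. Hence $\tilde M$ has rank exactly $r$.

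Next I would change variables. Using the orthogonal invariance $f_s(\cdot)\mapsto f_s(Q\,\cdot\,Q^T)$ (which preserves $\delta$) and the gauge freedom $\tilde U\mapsto\tilde UR$, $R\in O(r)$, choose an orthonormal basis of $\mathbb{R}^n$ whose first $r$ vectors span $\mathrm{colspan}(\tilde U)$ and whose next $\ell-r$ vectors (where $\ell=\min\{n,2r\}$) extend it to a basis of $\mathcal V:=\mathrm{colspan}(\tilde U)+\mathrm{colspan}(Z)$, so that $\tilde M=\mathrm{diag}(\Sigma,0)$ with $\Sigma\succ0$ diagonal. Writing the blocks of $Z$ as $A\in\mathbb{R}^{r\times r}$ (in $\mathrm{colspan}(\tilde U)$) and $C\in\mathbb{R}^{(\ell-r)\times r}$, a direct block computation gives $\|\tilde M-M^*_s\|_F^2=\tr(\Sigma^2)-2\langle\Sigma,AA^T\rangle+\|AA^T\|_F^2+2\|AC^T\|_F^2+\|CC^T\|_F^2$; this is the place where the symmetric structure collapses the asymmetric $\|AD^T\|_F^2+\|CB^T\|_F^2$ of \eqref{eqn:rank-1-thm} into $2\|AC^T\|_F^2$. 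Since $S\tilde U=0$ makes the first $r$ rows and columns of $S$ vanish, let $\Lambda\succeq0$ be the diagonalization of the positive part of the $(\ell-r)\times(\ell-r)$ block of $-S$; then $\langle\Lambda,CC^T\rangle$ differs from $-\langle S,M^*_s\rangle=\langle S,\tilde M-M^*_s\rangle$ only by a term controlled via \cite[Theorem~1]{biglobal} ($\delta$-RIP$_{2r,2r}\Rightarrow4\delta$-BDP$_{2r}$, which lets us replace $\nabla^2 f_s$ along low-rank directions by the identity tensor up to slack). Setting $\alpha:=\langle S,\tilde M-M^*_s\rangle/\|\tilde M-M^*_s\|_F^2$ — a restricted Rayleigh quotient of the Hessian, forced into $(1-\delta,1+\delta)$ by RIP — yields the middle line of \eqref{eqn:rank-1-thm-sym}, spuriousness $\tilde M\ne M^*_s$ gives $\langle\Lambda,CC^T\rangle\ne0$, and testing the second-order condition against rank-one $\dot U$ supported on one $\Sigma$-direction and one $\Lambda$-direction with the RIP upper bound gives $(1+\delta)\min_i\Sigma_{ii}\ge\max_i\Lambda_{ii}$. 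Finally, feeding the direction $\dot U$ aligned with the negative eigenvectors of $S$ into the second-order condition and optimizing over its magnitude against the curvature penalty produces the $\delta^2$ term and, after the RIP bounds, both the sharpened range $\alpha\le(1+\delta)/2$ and the last inequality $\tr(\Lambda^2)\le\alpha^{-1}(2\alpha-1+\delta^2)\langle\Lambda,CC^T\rangle$; this step reuses the computation of Theorem~\ref{thm:nes-suf} essentially verbatim.

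For the sufficiency direction, given $\Sigma,\Lambda,A,C$ satisfying \eqref{eqn:rank-1-thm-sym} with $AC^T=0$, I would reverse the construction: recover $\tilde U$ and $Z$ (hence $\tilde M=\tilde U\tilde U^T$, $M^*_s=ZZ^T$) from $\Sigma,A,C$, build a linear instance $f_s(M)=\tfrac12\|\mathcal A(M)-b\|_F^2$ with $b:=\mathcal A(M^*_s)$ so that $M^*_s$ is the unique global minimum by \cite{zhu2018global}, and choose the self-adjoint operator $\mathcal H:=\mathcal A^*\mathcal A$ to be the identity tensor plus a symmetric, low-rank perturbation supported on $\mathrm{span}(\tilde M-M^*_s)$ and the $\Lambda$-block, tuned so that $S:=\mathcal H(\tilde M-M^*_s)$ obeys $S\tilde U=0$ and the second-order condition holds. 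The hypothesis $AC^T=0$ keeps this perturbation small enough in rank and support for $(1-\delta)\mathcal I\preceq\mathcal H\preceq(1+\delta)\mathcal I$ to hold on symmetric rank-$\le2r$ matrices, and the eigenvalue inequality together with the $\delta^2$ inequality of \eqref{eqn:rank-1-thm-sym} are exactly the feasibility conditions for such an $\mathcal H$ to exist; then $\tilde U$ is a spurious second-order critical point of \eqref{eqn:sym} by construction.

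The delicate point — flagged in Example~\ref{exm:2} — is that a symmetric factorization only reaches \emph{symmetric} matrix directions $\tilde U\dot U^T+\dot U\tilde U^T$, so the second-order cone of \eqref{eqn:sym} genuinely differs from that of \eqref{eqn:asym}, and one cannot simply invoke the asymmetric computation. I expect the main obstacle to be in the sufficiency construction: producing a symmetric, low-rank, RIP-preserving perturbation of $\mathcal H$ that simultaneously realizes the prescribed block $-S_{22}$ (diagonalized to $\Lambda$) and annihilates $S\tilde U$ requires verifying that the room left after imposing $\Sigma\succ0$, the eigenvalue bound, and the $\delta^2$ inequality is genuinely nonempty; together with checking that the worst-case perturbation used in the necessity argument is admissible for the symmetric problem, this is the only place the argument departs from Theorem~\ref{thm:nes-suf}, the remainder being bookkeeping.
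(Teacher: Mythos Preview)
Your overall plan—mirror Theorem~\ref{thm:nes-suf} with the identification $B=A$, $D=C$—is exactly the paper's strategy, and most of the bookkeeping you describe is right. But there is a genuine gap in how you define $\Lambda$. You take it to be the \emph{positive part} of the $(\ell-r)\times(\ell-r)$ block of $-S$, forcing $\Lambda\succeq 0$. This contradicts the remark immediately following the theorem: in the symmetric case ``the diagonal matrix $\Lambda$ is not enforced to be positive semi-definite,'' precisely because eigendecomposition replaces SVD. If instead you let $-\Lambda$ be the \emph{full} diagonalization of $\mathcal P_{\tilde U}^\perp S\,\mathcal P_{\tilde U}^\perp$ restricted to $\mathcal V$, then, since $M^*_s$ lies entirely in $\mathcal V$, you get $\langle\Lambda,CC^T\rangle=-\langle S,M^*_s\rangle=\langle S,\tilde M-M^*_s\rangle=\alpha\|\tilde M-M^*_s\|_F^2$ \emph{exactly}—no approximation needed. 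Your appeal to BDP is a symptom of the incorrect truncation and in any case misapplies the property: BDP bounds differences of Hessians at two base points, not blocks of a single gradient, so it cannot repair the discrepancy you introduced.

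A second issue: you attribute both $\alpha\le(1+\delta)/2$ and the inequality $\tr(\Lambda^2)\le\alpha^{-1}(2\alpha-1+\delta^2)\langle\Lambda,CC^T\rangle$ to ``feeding $\dot U$ into the second-order condition and optimizing over its magnitude.'' In the paper these arise from two entirely separate mechanisms. The bound $\alpha\le(1+\delta)/2$ comes from the SVP fixed-point characterization (the symmetric half of Theorem~\ref{thm:fixed-pt}): $\tilde M$ minimizes $\|M-[\tilde M-(1+\delta)^{-1}S]\|_F$ over the low-rank PSD manifold, and comparing against $M=M^*_s$ yields $\langle S,\tilde M-M^*_s\rangle\le\tfrac{1+\delta}{2}\|\tilde M-M^*_s\|_F^2$. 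The $\tr(\Lambda^2)$ inequality is a pure RIP argument on the integrated Hessian tensor $\mathcal H$ along the segment $[M^*_s,\tilde M]$: with $\tilde G:=G-\alpha(\tilde M-M^*_s)$ orthogonal to $\tilde M-M^*_s$, one applies the RIP bounds to $K(t):\mathcal H:K(t)$ for $K(t):=t(\tilde M-M^*_s)+\tilde G$, obtaining two quadratic-in-$t$ inequalities whose discriminant conditions combine to $\|\tilde G\|_F^2\le[\delta^2-(1-\alpha)^2]\|\tilde M-M^*_s\|_F^2$. Neither step uses the second-order condition of $h_s$ beyond what already fed into Theorem~\ref{thm:fixed-pt}; your description would not produce the sharp $\delta^2$ term.
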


Compared to Theorem \ref{thm:nes-suf}, the diagonal matrix $\Lambda$ is not enforced to be positive semi-definite. The reason is that the eigenvalue decomposition is used instead of the singular value decomposition in the symmetric case, and therefore some eigenvalues can be negative. Similarly, we can obtain the non-existence and the positive correlation results for the symmetric problem.
\begin{corollary}
If function $f_s(\cdot)$ satisfies the $\delta$-RIP$_{2r,2r}$ property, then the following statements hold.
\begin{itemize}
    \item If $\delta \leq 1/3$, then there are no spurious second-order critical points;
    \item If $\delta < 1/2$, then there exists a positive correlation between every second-order critical point and the ground truth;
    \item If $\delta = 1/2$, then there exists a counterexample with spurious second-order critical points;
    \item If $\delta < 1/2$ and $r=1$, then there are no spurious second-order critical points.
\end{itemize}
\end{corollary}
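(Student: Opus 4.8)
The plan is to mirror the asymmetric analysis (Theorem~\ref{thm:nes-suf} and Corollaries~\ref{cor:rank-1}--\ref{cor:rank-r}) in the symmetric setting, using the symmetric necessary/sufficient conditions \eqref{eqn:rank-1-thm-sym} as the central device. The four bullets correspond exactly to the four asymmetric results already established, specialized to the system \eqref{eqn:rank-1-thm-sym}, so each is obtained by the same feasibility/infeasibility argument applied to the symmetric conditions rather than \eqref{eqn:rank-1-thm}.

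First, for the non-existence claim when $\delta\le 1/3$: I would argue by contradiction that the system \eqref{eqn:rank-1-thm-sym} is infeasible for $\delta\le 1/3$. Starting from the second line, write $Q := \tr(\Sigma^2) - 2\langle\Sigma,AA^T\rangle + \|AA^T\|_F^2 + 2\|AC^T\|_F^2 + \|CC^T\|_F^2$ and note $Q \ge \|\Sigma - AA^T\|_F^2 + 2\|AC^T\|_F^2 + \|CC^T\|_F^2 \ge \|CC^T\|_F^2 \ge 0$, with $\langle\Lambda,CC^T\rangle = \alpha Q$. Since $\langle\Lambda,CC^T\rangle\neq 0$ we have $Q>0$ and $\langle\Lambda,CC^T\rangle = \alpha Q > 0$. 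Then combine $\tr(\Lambda^2)\le \alpha^{-1}(2\alpha-1+\delta^2)\langle\Lambda,CC^T\rangle = (2\alpha-1+\delta^2)Q$ with a lower bound on $\tr(\Lambda^2)$ in terms of $\langle\Lambda,CC^T\rangle$ and $Q$. The key inequality to extract is the one coming from the eigenvalue-ordering constraint $(1+\delta)\min_i\Sigma_{ii}\ge\max_j\Lambda_{jj}$ together with $\langle\Lambda,CC^T\rangle=\alpha Q$; bounding $\langle\Lambda,CC^T\rangle \le \|\Lambda\|_F\|CC^T\|_F$ and $\|CC^T\|_F^2\le Q$, and using $\alpha\le (1+\delta)/2$ and $\alpha > 1-\delta$, one derives a contradiction precisely when $\delta\le 1/3$, exactly as in the asymmetric Corollary~\ref{cor:rank-r}. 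The arithmetic should reduce to the same scalar inequality $2\alpha - 1 + \delta^2 < \alpha^2$ (or its analogue) having no solution in the admissible $\alpha$-range when $\delta\le 1/3$.

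Second, for the positive-correlation claim when $\delta<1/2$: rather than showing \eqref{eqn:rank-1-thm-sym} is infeasible, I would track the quantity controlling $\langle\tilde M, M^*_s\rangle/(\|\tilde M\|_F\|M^*_s\|_F)$ through the reduction that produced \eqref{eqn:rank-1-thm-sym}. In the asymmetric proof this correlation is encoded by how large $\langle\Sigma,AA^T\rangle$ (the ``overlap'' block) must be relative to $\tr(\Sigma^2)$ and the other blocks; feasibility of \eqref{eqn:rank-1-thm-sym} for $\delta<1/2$ forces $A$ (hence the overlap of the critical point with the ground truth) to be bounded away from zero, giving the explicit $C(\delta)\in(0,1]$. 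I would define $C(\delta)$ by the same formula as in Corollary~\ref{cor:rank-r} and verify it is admissible here; the only new check is that dropping the PSD constraint on $\Lambda$ (allowed in the symmetric case) does not weaken the bound, which it does not because the relevant estimates use $|\langle\Lambda,CC^T\rangle|$ and $\tr(\Lambda^2)$, both insensitive to signs of eigenvalues of $\Lambda$.

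Third, the $\delta=1/2$ counterexample is immediate from the sufficiency half of the symmetric theorem: I would exhibit explicit $\Sigma,\Lambda,A,C$ satisfying \eqref{eqn:rank-1-thm-sym} with $AC^T=0$, e.g.\ the symmetric analogue of the earlier asymmetric example, taking $\Sigma:=\tfrac12 I_r$, $\Lambda:=\tfrac34 I_r$, $A:=0_r$, $C:=I_r$, $\alpha:=\tfrac35$, and simply checking the three lines of \eqref{eqn:rank-1-thm-sym} hold with $\langle\Lambda,CC^T\rangle = \tfrac34 r \neq 0$; then the sufficiency statement yields the desired $f_s(\cdot)$. Fourth, the rank-$1$ improvement to $\delta<1/2$ follows by specializing \eqref{eqn:rank-1-thm-sym} to $r=1$, where $\Sigma,A$ become scalars; the system collapses to a low-dimensional feasibility problem that, after the same relaxations used in Corollary~\ref{cor:rank-1}, is infeasible for all $\delta<1/2$. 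The main obstacle I anticipate is the positive-correlation bullet: carefully propagating the correlation lower bound through the symmetric reduction (as opposed to the clean yes/no feasibility questions of the other three bullets) and confirming that the constant $C(\delta)$ from the asymmetric case transfers verbatim, rather than degrading, when $\Lambda$ is allowed to be indefinite.
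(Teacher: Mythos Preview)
Your overall plan matches the paper, which states this corollary without proof and defers to the symmetric analogues of Corollaries~\ref{cor:rank-1}--\ref{cor:rank-r}. Two specific points deserve comment.

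For the first bullet you are working much too hard. In Corollary~\ref{cor:rank-r} the case $\delta\le 1/3$ is dispatched in one line: the necessary conditions force $\alpha\in(1-\delta,(1+\delta)/2]$, and this interval is nonempty only when $1-\delta<(1+\delta)/2$, i.e.\ $\delta>1/3$. The constraint on $\alpha$ in \eqref{eqn:rank-1-thm-sym} is identical, so the same one-line argument applies; no manipulation of the trace or eigenvalue inequalities is needed.

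For the second bullet, your concern about indefinite $\Lambda$ is real and not as easily waved away as you suggest. Step~I of the proof of Corollary~\ref{cor:rank-r} uses the bound $\|\Lambda\|_F^2\le(1+\delta)^2\|\Sigma\|_F^2$, which relies on $0\le\Lambda_{ii}\le(1+\delta)\min_j\Sigma_{jj}$ for every $i$; this feeds into the first inequality of~\eqref{eqn:rank-r-1} and is used throughout Step~II. In the symmetric conditions~\eqref{eqn:rank-1-thm-sym} only $\max_i\Lambda_{ii}$ is bounded, so that estimate on $\|\Lambda\|_F$ does not follow from the first line alone. The fix has to come from the third line, which bounds $\tr(\Lambda^2)$ directly; combined with $CC^T\succeq 0$ (so that negative entries of $\Lambda$ only decrease $\langle\Lambda,CC^T\rangle$ while increasing $\tr(\Lambda^2)$), one can recover control of the negative part of $\Lambda$. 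You should carry this out explicitly rather than asserting that ``the estimates are insensitive to signs''---the first-line estimate, taken by itself, is sensitive to them. Bullets three and four go through as you describe.
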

We note that the last statement serves as a generalization of the results in \cite{zhang2019sharp} to the nonlinear measurement case, and improves upon the bound in \cite{biglobal} by dropping the BDP constant.

\section{Global Landscape: Strict Saddle Property}
\label{sec:strict-saddle}

Although the non-existence of spurious second-order critical points can ensure the global convergence under certain regularity conditions, it cannot guarantee a fast convergence rate in general. Saddle-point escaping algorithms may become stuck at approximate second-order critical points for exponentially long time. To guarantee the global polynomial-time convergence, the following strict saddle property is commonly considered in the literature.
\begin{definition}[\cite{sun2018geometric}]
Consider an arbitrary optimization problem $\min_{x\in\mathcal{X}\subset\mathbb{R}^d} F(x)$ and let $\mathcal{X}^*$ denote the set of its global minima. It is said that the problem satisfies the $(\alpha,\beta,\gamma)$-\textbf{strict saddle property} for $\alpha,\beta,\gamma > 0$ if at least one of the following conditions is satisfied for every $x\in\mathcal X$:
\[  \mathrm{dist}( x,\mathcal{X}^* ) \leq \alpha;\quad \| \nabla F(x) \|_F \geq \beta;\quad \lambda_{min}[\nabla^2 F(x)] \leq -\gamma. \]
%
%
\end{definition}
For the low-rank problems, we choose the distance to be the Frobenius norm in the factorization space. This distance is equivalent to  the Frobenius norm in the matrix space in the sense that there exist constants $c_1(\mathcal{X}^*)>0$ and $c_2(\mathcal{X}^*)>0$ such that
\[ c_1(\mathcal{X}^*)\cdot \|U - U^*\|_F \leq \|UU^T - U^*(U^*)^T\|_F \leq c_2(\mathcal{X}^*) \cdot \|U-U^*\|_F \]
holds for all $U \in \mathcal X$ as long as $\|U-U^*\|_F$ is small \citep{tu2016low}. A similar relation holds for the asymmetric case.

In \cite{jin2017escape}, it has been proved that the perturbed gradient decent method can find an $\epsilon$-approximate second-order critical point in $\tilde{O}(\epsilon^{-2})$ iterations with high probability if the Hessian of the objective function is Lipschitz. Namely, the algorithm returns a point $x\in\mathcal{X}$ such that
\[ \| \nabla F(x) \|_F \leq O(\epsilon), \quad \lambda_{min}[\nabla^2 F(x)] \geq -O(\sqrt{\epsilon})  \]
in $\tilde{O}(\epsilon^{-2})$ iterations with high probability. If we choose $\epsilon > 0$ to be small enough such that $O(\epsilon) < \beta$ and $-O(\sqrt{\epsilon}) > - \gamma$, then the strict saddle property ensures that the returned point satisfies $\mathrm{dist}( x,\mathcal{X}^* ) \leq \alpha$ with high probability. We note that the Lipschitz continuity of the Hessian can be similarly guaranteed by the boundedness of trajectories of the perturbed gradient method, which can be proved similarly as Theorem 8 in \cite{jin2017escape}. 
Since the smoothness properties are satisfied over a bounded region, we may apply the perturbed gradient descent method \citep{jin2017escape} to achieve the polynomial-time global convergence with random initialization.

In this section, we prove that problems \eqref{eqn:sym} and \eqref{eqn:asym-reg} satisfy the strict saddle property with an arbitrary $\alpha >0$ in the exact parameterization case, i.e., when the global optimum has rank $r$.
\begin{assumption}\label{asp:2}
The global optimum $M^*_a$ or $M^*_s$ has rank $r$.
\end{assumption}

It has been proved in \cite{zhu2021global} that the regularized problem \eqref{eqn:asym-reg} satisfies the strict saddle property if the function $f_a(\cdot)$ has the $\delta$-RIP$_{2r,4r}$ property with
\[ \delta < \frac{\sigma_r(M^*_a)^{3/2}}{100\|M^*_a\|_F\|M^*_a\|_2^{1/2}}. \]
Our results improve upon their bounds by allowing a larger problem-free RIP constant and requiring only the RIP$_{2r,2r}$ property (note that there are problems with RIP$_{2r,2r}$ property while RIP$_{2r,4r}$ property does not hold \citep{biglobal}). Our result can also be viewed as a robust version of the results in \cite{ha2020equivalence}.

%
\begin{theorem}\label{thm:strict-saddle-1}
Suppose that the function $f_a(\cdot)$ satisfies the $\delta$-RIP$_{2r,2r}$ property with $\delta < 1/3$. Given an arbitrary constant $\alpha>0$, if $\mu$ is selected to belong to the interval $[(1-\delta)/3,1-\delta)$, then there exist positive constants
\[ \epsilon_1 := \epsilon_1(\delta,r,\mu,\sigma_r(M^*_a),\|M^*_a\|_F,\alpha),\quad \lambda_1 := \lambda_1(\delta,r,\mu,\sigma_r(M^*_a),\|M^*_a\|_F,\alpha) \]
such that for every $\epsilon\in(0,\epsilon_1]$ and $\lambda\in(0,\lambda_1]$, 
problem \eqref{eqn:asym-reg} satisfies the $(\alpha,\beta,\gamma)$-strict saddle property with
\[ \beta := \min\left\{ \mu(\epsilon/r)^{3/2}, \lambda \right\} ,\quad \gamma := \mu\epsilon. \]
%
\end{theorem}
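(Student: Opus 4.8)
The plan is a contrapositive, quantitative version of the argument behind Corollary~\ref{cor:rank-r}: fix $\alpha>0$ and $\mu\in[(1-\delta)/3,1-\delta)$, assume $(U,V)$ satisfies $\|\nabla\rho(U,V)\|_F\le\beta$ and $\lambda_{\min}[\nabla^2\rho(U,V)]\ge-\gamma$, and show $\mathrm{dist}((U,V),\mathcal X^*)\le\alpha$ once $\epsilon,\lambda$ are small. Throughout I use that, under Assumptions~\ref{asp:1}--\ref{asp:2}, $\nabla f_a(M^*_a)=0$, that $M^*_a$ is the unique global minimizer of $f_a$, that the balancing theorem makes $\mathcal X^*=\{(U^*Q,V^*Q):Q\in O(r)\}$ with $U^*(V^*)^T=M^*_a$, and that $\rho$ is $O(r)$-invariant so $\mathrm{dist}((U,V),\mathcal X^*)^2=\min_{Q\in O(r)}\|U-U^*Q\|_F^2+\|V-V^*Q\|_F^2$.

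\textbf{Step 1: a priori boundedness.} I would first bound $\|U\|_F^2+\|V\|_F^2\le R^2$ with $R=R(\delta,r,\mu,\sigma_r(M^*_a),\|M^*_a\|_F)$. Using $\langle\nabla\rho(U,V),(U,V)\rangle=2\langle\nabla f_a(UV^T),UV^T\rangle+\mu\,g(U,V)$, the RIP lower bound on the rank-$\le 2r$ segment from $M^*_a$ to $UV^T$ (together with $\nabla f_a(M^*_a)=0$), the identity $g(U,V)=\|U^TU\|_F^2+\|V^TV\|_F^2-2\|UV^T\|_F^2$, and $\mu<1-\delta$, one gets $\langle\nabla\rho(U,V),(U,V)\rangle\ge\mu(\|U^TU\|_F^2+\|V^TV\|_F^2)-c_1\|(U,V)\|_F^2-c_2\ge \tfrac{\mu}{2r}\|(U,V)\|_F^4-c_1\|(U,V)\|_F^2-c_2$; comparing with $\beta\|(U,V)\|_F$ yields the bound on $R$. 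On the ball of radius $R$, $\nabla^2 f_a$ (hence $\nabla^2\rho$) has a finite Lipschitz constant $L_H(R)$ by twice continuous differentiability; this is needed only for the perturbed-gradient-descent corollary, not for the landscape claim itself.

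\textbf{Step 2: the Hessian test direction.} Let $Q_\star\in O(r)$ attain $\mathrm{dist}((U,V),\mathcal X^*)$, put $\Delta:=(U-U^*Q_\star,\,V-V^*Q_\star)$ and $N:=UV^T-M^*_a$; then $\|\Delta\|_F\le 2R$, $\Delta_UV^T+U\Delta_V^T=N+\Delta_U\Delta_V^T$, and $\rank(\Delta_UV^T+U\Delta_V^T)\le 2r$. Feeding $\Delta$ into $\lambda_{\min}[\nabla^2\rho]\ge-\gamma$ gives $-\gamma\|\Delta\|_F^2\le[\nabla^2 f_a(UV^T)](\Delta_UV^T+U\Delta_V^T,\Delta_UV^T+U\Delta_V^T)+2\langle\nabla f_a(UV^T),\Delta_U\Delta_V^T\rangle+\tfrac{\mu}{4}[\nabla^2 g(U,V)](\Delta,\Delta)$. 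I would bound the $f_a$-Hessian term above by $(1+\delta)\|N+\Delta_U\Delta_V^T\|_F^2$ via RIP$_{2r,2r}$, rewrite $\nabla f_a(UV^T)V$ and $\nabla f_a(UV^T)^TU$ through the approximate first-order condition so that $\langle\nabla f_a(UV^T),\Delta_U\Delta_V^T\rangle$ is controlled by $\beta\|\Delta\|_F$ plus curvature of $g$, use the exact polynomial formula for $[\nabla^2 g(U,V)](\Delta,\Delta)$, and invoke the first-order optimality of $Q_\star$ to cancel the rotational cross terms; following the bookkeeping of \citep{tu2016low,zhu2018global,ge2016matrix,ge2017no} this collapses to $[\nabla^2\rho(U,V)](\Delta,\Delta)\le -c(\delta,\mu)\,\|N\|_F^2+c'(\delta,\mu)\big(\beta\|\Delta\|_F+R\,\|U^TU-V^TV\|_F^2\big)$, where $c(\delta,\mu)>0$ precisely because $\delta<1/3$ and $\mu\ge(1-\delta)/3$. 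A perturbation of the balancing theorem bounds $\|U^TU-V^TV\|_F$ by $\beta$ up to factors of $R$ and $1/\mu$, so $\|N\|_F^2\le\tau(\beta,\gamma):=\tfrac1c\big(4R^2\gamma+2c'R\beta+(\text{lower order in }\beta)\big)$. An alternative route, which is what the remark after Theorem~\ref{thm:svp} alludes to, is to translate the approximate second-order conditions into the statement that $UV^T$ is an approximate fixed point of the SVP map and then apply the strengthened contraction inequality from the proof of Theorem~\ref{thm:svp}; it yields the same estimate $\|N\|_F^2\lesssim\tau(\beta,\gamma)$.

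\textbf{Step 3 and the main obstacle.} Now choose $\epsilon_1,\lambda_1$ small enough (hence $\beta=\min\{\mu(\epsilon/r)^{3/2},\lambda\}$ and $\gamma=\mu\epsilon$ small) that $\sqrt{\tau(\beta,\gamma)}\le c_0\sigma_r(M^*_a)$ for the absolute constant $c_0$ of the local metric-equivalence lemma; then, using $\rank(M^*_a)=r$, $UV^T$ lies in the basin of $M^*_a$, its row/column spaces are $O(\sqrt\tau/\sigma_r(M^*_a))$-close to those of $M^*_a$, and together with the approximate balancedness this gives $\mathrm{dist}((U,V),\mathcal X^*)^2=\|\Delta\|_F^2\le \tfrac{C}{\sigma_r(M^*_a)}\|N\|_F^2+\tfrac{C'}{\sigma_r(M^*_a)}\|U^TU-V^TV\|_F^2$, which is $\le\alpha^2$ after shrinking $\epsilon_1,\lambda_1$ once more (cf.\ \citep{tu2016low}); this fixes the dependence of $\epsilon_1,\lambda_1$ on $(\delta,r,\mu,\sigma_r(M^*_a),\|M^*_a\|_F,\alpha)$. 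I expect the only genuinely hard step to be the inequality in Step 2: obtaining a constant $c(\delta,\mu)$ that stays strictly positive on the entire regime $\delta<1/3$, $\mu\in[(1-\delta)/3,1-\delta)$ forces one to (i) use both sides of the RIP bound, not just restricted convexity, (ii) feed the approximate first-order condition into $\langle\nabla f_a(UV^T),\Delta_U\Delta_V^T\rangle$ without losing a constant, and (iii) extract exactly the curvature that the quartic $g$ can supply — any leftover term of the form $\|\Delta\|_F^2$ with the wrong sign would break the conclusion, which is the structural reason the admissible range of $\mu$ is two-sided and $1/3$ is the threshold.
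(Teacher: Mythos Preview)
Your Step~2 is the crux, and as written it has a genuine gap. The test direction $\Delta=(U-U^*Q_\star,\,V-V^*Q_\star)$ is exactly the one used in \cite{ge2016matrix,ge2017no,zhu2018global,zhu2021global}, and those analyses are precisely the ones that yield the weaker thresholds ($\delta<1/5$, $1/10$, $1/20$, or the $M^*$-dependent constant of \cite{zhu2021global}) that this theorem is meant to improve. Your claim that ``following the bookkeeping of \cite{tu2016low,zhu2018global,ge2016matrix,ge2017no}'' produces a coefficient $c(\delta,\mu)>0$ on the full range $\delta<1/3$ is not supported by those references; the cross term $\langle N,\Delta_U\Delta_V^T\rangle$ and the quadratic-in-$\Delta$ term $\|\Delta_U\Delta_V^T\|_F^2$ cannot be absorbed into $\|N\|_F^2$ with a positive margin once $\delta\ge 1/5$, and the regularizer $g$ does not supply enough curvature to close that gap. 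You correctly flag this as ``the only genuinely hard step,'' but the proposal does not resolve it.

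The paper does \emph{not} use $\Delta=W-W^*Q_\star$. Its test direction is $\Delta_U=-uq^T$, $\Delta_V=vq^T$, where $u,v$ are the top singular vectors of $\nabla f_a(UV^T)$ and $q$ is a unit vector with $\|Wq\|_2=\sigma_r(W)$. With this choice one obtains (after controlling the regularizer terms) $[\nabla^2\rho(U,V)](\Delta,\Delta)\le -2G+(1+\delta)\sigma_r^2(W)+o(1)$ with $G:=\|\nabla f_a(UV^T)\|_2$, so negative curvature follows as soon as $\sigma_r^2(W)\lesssim 2G/(1+\delta)$. The second, and essential, ingredient is to show that this last inequality \emph{always} holds for approximate first-order points far from $\mathcal X^*$: the paper argues by contradiction, lower-bounding $\sigma_r(M)$ from $\sigma_r(W)$, then upper-bounding $\sigma_r(M)$ via a single SVP step using the quantitative contraction $f(M_{t+1})-f(M^*)\le \tfrac{2\delta}{1-\delta}[f(M_t)-f(M^*)]$ from Theorem~\ref{thm:svp}; the factor $2\delta/(1-\delta)<1$ is exactly what makes $\delta<1/3$ the threshold. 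Your ``alternative route'' paragraph gestures at this SVP mechanism, but the paper's implementation is not a direct bound on $\|N\|_F^2$; it is a dichotomy on $\sigma_r(W)$ tied to a specific curvature direction, and that structure is what you are missing.
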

We note that the constraint $\mu\in[(1-\delta)/3,1-\delta)$ is not optimal and it can be similarly proved that $\mu\in(\delta,1-\delta)$ also guarantees the strict saddle property. The key step in the proof is to show that for every point $(U,V)$ at which the gradient of $f_a(UV^T)$ is small,  it holds that
\[ \|\nabla f_a(UV^T)\|_2 \geq (1+\delta)\sigma_r(UV^T) + C \cdot  (1-3\delta) [ f_a(UV^T) - f_a(M^*_a) ], \]
where $C > 0$ is a constant independent of $(U,V)$. This inequality can be viewed as a major extension of the non-existence of spurious second-order critical points when $\delta < 1/3$ \citep{ha2020equivalence}, which shows that every spurious second-order critical point $(U,V)$ satisfies
\[ \|\nabla f_a(UV^T)\|_2 > (1+\delta)\sigma_r(UV^T). \]
We emphasize that our proof requires a new framework and is not a standard revision of the existing methods, which is the reason why sharper bounds can be established. By replacing $\|\nabla f_a(M)\|_2$ with $-\lambda_{min}(\nabla f_s(M))$, the analysis for the asymmetric case can be extended to the symmetric case with minor modifications and the same bound follows.
\begin{theorem}\label{thm:strict-saddle-2}
Suppose that the function $f_s(\cdot)$ satisfies the $\delta$-RIP$_{2r,2r}$ property with $\delta < 1/3$. Given an arbitrary constant $\alpha>0$, there exists a positive constant $\lambda_1 := \lambda_1(\delta,r,\sigma_r(M^*_s),\|M^*_s\|_F,\alpha)$ such that for every $\lambda\in(0,\lambda_1]$, 
problem \eqref{eqn:sym} satisfies the $(\alpha,\beta,\gamma)$-strict saddle property with
\[ \beta :=  \lambda  ,\quad \gamma := 2\lambda. \]
%
\end{theorem}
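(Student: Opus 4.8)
\emph{Proof proposal.} The plan is to carry over the proof of the asymmetric strict saddle property (Theorem~\ref{thm:strict-saddle-1}), replacing $\|\nabla f_a(M)\|_2$ by $-\lambda_{min}(\nabla f_s(M))$ throughout. Since the symmetric problem~\eqref{eqn:sym} has no balancing regularizer, the auxiliary parameters $\mu$ and $\epsilon$ disappear and the argument is in fact simpler. Fix $\alpha>0$ and consider an arbitrary $U\in\mathbb{R}^{n\times r}$ with $\mathrm{dist}(U,\mathcal{X}^*)>\alpha$ (otherwise the first alternative of the strict saddle property holds). If $\|\nabla h_s(U)\|_F\ge\lambda$ we are done (second alternative, since $\beta=\lambda$), so assume $\|\nabla h_s(U)\|_F<\lambda\le\lambda_1$; then $U$ is an approximate first-order critical point with $M:=UU^T$ and $\nabla f_s(M)U=\tfrac12\nabla h_s(U)$ small. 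It suffices to exhibit a Frobenius-unit direction $\dot U$ with $[\nabla^2 h_s(U)](\dot U,\dot U)\le-2\lambda$.

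The engine is the symmetric analogue of the key inequality stated after Theorem~\ref{thm:strict-saddle-1}: there is a constant $C=C(\delta)>0$ such that, for every such $U$,
\[ -\lambda_{min}\!\big(\nabla f_s(M)\big)\;\ge\;(1+\delta)\,\lambda_r(M)\;+\;C\,(1-3\delta)\,\big[f_s(M)-f_s(M^*_s)\big]\;-\;\tau\big(\|\nabla h_s(U)\|_F\big), \]
where $\tau(t)\downarrow 0$ as $t\downarrow 0$ with a modulus depending only on $\delta,r,\sigma_r(M^*_s),\|M^*_s\|_F$. To prove it I would apply one step of the Singular Value Projection map to $M$ with step size $\eta=(1+\delta)^{-1}$, obtaining $M_+\in\mathcal{M}_{sym}$. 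The strengthened estimate behind Theorem~\ref{thm:svp} (the RIP descent lemma plus restricted strong convexity of $f_s$ on rank-$\le 2r$ perturbations) gives $f_s(M)-f_s(M_+)\ge\frac{1-3\delta}{1-\delta}[f_s(M)-f_s(M^*_s)]$. Conversely, the optimal value of the projection subproblem, expressed through the eigenvalues of $M-\eta\nabla f_s(M)$, bounds $f_s(M)-f_s(M_+)$ from above in terms of the eigenvalue excess $\big[-\lambda_{min}(\nabla f_s(M))-(1+\delta)\lambda_r(M)\big]_+$, plus an error stemming from the fact that $\mathrm{col}(U)$ is only an \emph{approximate} invariant subspace of $\nabla f_s(M)$; this error is controlled by $\|\nabla f_s(M)U\|_F=\tfrac12\|\nabla h_s(U)\|_F$ via elementary perturbation bounds (Weyl's inequality and a $\sin\Theta$-type estimate) and is the origin of $\tau(\cdot)$. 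Comparing the two estimates yields the displayed inequality; this is a quantitative refinement, at approximate critical points, of the SVP fixed-point characterization of \cite{ha2020equivalence}.

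To convert the key inequality into negative curvature, let $w$ be a unit eigenvector of $\nabla f_s(M)$ for $\lambda_{min}(\nabla f_s(M))$, let $v$ be a unit eigenvector of $U^TU$ for its smallest eigenvalue (so $\|Uv\|_2^2=\lambda_r(M)$, with $\lambda_r(M)=0$ when $\rank(U)<r$), and set $\dot U:=wv^T$ and $G:=Uvw^T+wv^TU^T$; then $\|\dot U\|_F=1$ and $\dot U\dot U^T=ww^T$. Using the second-order form of $h_s$ and the RIP upper bound,
\[ \tfrac12\big[\nabla^2 h_s(U)\big](\dot U,\dot U)\;=\;\langle\nabla f_s(M),ww^T\rangle+\tfrac12\big[\nabla^2 f_s(M)\big](G,G)\;\le\;\lambda_{min}(\nabla f_s(M))+(1+\delta)\,\lambda_r(M)+\tau'\big(\|\nabla h_s(U)\|_F\big), \]
where $\langle\nabla f_s(M),ww^T\rangle=\lambda_{min}(\nabla f_s(M))$, $\tfrac12[\nabla^2 f_s(M)](G,G)\le\tfrac{1+\delta}{2}\|G\|_F^2$, and $\|G\|_F^2\le 2\lambda_r(M)$ up to an error controlled by $\|\nabla h_s(U)\|_F$ (the near-orthogonality of $w$ to $\mathrm{col}(U)$), which gives $\tau'$ of the same type as $\tau$. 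Combining the two displays, $[\nabla^2 h_s(U)](\dot U,\dot U)\le-2C(1-3\delta)[f_s(M)-f_s(M^*_s)]+2(\tau+\tau')(\|\nabla h_s(U)\|_F)$.

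Finally, because $M^*_s$ is the unique global minimum and, by Assumptions~\ref{asp:1} and~\ref{asp:2}, has rank exactly $r$, restricted strong convexity of $f_s$ together with the equivalence between $\mathrm{dist}(U,\mathcal{X}^*)$ and $\|UU^T-M^*_s\|_F$ — local as in \cite{tu2016low}, and global by a compactness argument on bounded regions combined with restricted strong convexity on the unbounded part — yields a constant $\psi_\alpha=\psi_\alpha(\delta,r,\sigma_r(M^*_s),\|M^*_s\|_F)>0$ such that $\mathrm{dist}(U,\mathcal{X}^*)>\alpha$ forces $f_s(M)-f_s(M^*_s)\ge\psi_\alpha$. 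Hence $[\nabla^2 h_s(U)](\dot U,\dot U)\le-2C(1-3\delta)\psi_\alpha+2(\tau+\tau')(\lambda)$, and choosing $\lambda_1$ so small that $2(\tau+\tau')(\lambda_1)\le C(1-3\delta)\psi_\alpha$ and $2\lambda_1\le C(1-3\delta)\psi_\alpha$ gives, for all $\lambda\le\lambda_1$, the bound $\lambda_{min}[\nabla^2 h_s(U)]\le[\nabla^2 h_s(U)](\dot U,\dot U)\le-C(1-3\delta)\psi_\alpha\le-2\lambda=-\gamma$. This proves the $(\alpha,\lambda,2\lambda)$-strict saddle property with $\lambda_1$ depending only on $\delta,r,\sigma_r(M^*_s),\|M^*_s\|_F,\alpha$. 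The main obstacle is the key inequality: upgrading the SVP fixed-point argument to a quantitative, uniform statement at approximate critical points, which requires controlling the perturbation of both the relevant eigenvalue gap of $\nabla f_s(M)$ and the invariant-subspace alignment by $\|\nabla h_s(U)\|_F$ \emph{uniformly}, even where $M$ is unbounded — the point at which the dependence of $\lambda_1$ on $\sigma_r(M^*_s)$ and $\|M^*_s\|_F$ is forced, handled exactly as in the proof of Theorem~\ref{thm:strict-saddle-1}.
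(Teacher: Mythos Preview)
Your proposal is correct and follows essentially the same approach as the paper: the direction $\dot U=wv^T$, the RIP curvature estimate, and the SVP-descent key inequality are exactly the content of the paper's symmetric lemmas culminating in Lemma~\ref{lem:7-2}, while your final remark about the unbounded region corresponds to Lemma~\ref{lem:7-1}. The paper isolates the large-$\|UU^T\|_F$ case first (Lemma~\ref{lem:7-1}) so that all subsequent error terms live on a bounded region, whereas you absorb this into the modulus of $\tau,\tau'$ and defer the boundedness to the very end---a packaging difference only.
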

The above bound is the first theoretical guarantee of the strict saddle property for the nonlinear symmetric problem.

\section{Conclusion}

In this work, we analyze the geometric properties of low-rank optimization problems via the non-convex factorization approach. We prove novel necessary conditions and sufficient conditions for the non-existence of spurious second-order critical points in both symmetric and asymmetric cases. We show that these conditions lead to sharper bounds and greatly simplify the construction of counterexamples needed to study the sharpness of the bounds. The developed bounds significantly generalize several of the existing results. In the rank-$1$ case, the bound is proved to be the sharpest possible. In the general rank case, we show that there exists a positive correlation between second-order critical points and the global minimum for problems whose RIP constants are higher than the developed bound but lower than the fundamental limit obtained by the counterexamples. Finally, the strict saddle property is proved with a weaker requirement on the RIP constant for asymmetric problems. The paper develops the first strict saddle property in the literature for nonlinear symmetric problems.  


\bibliographystyle{iclr2020_conference}
\bibliography{reference}

\newpage
\begin{appendix}

\section{Optimality Conditions}\label{sec:conditions}

In this section, we develop the optimality conditions for problems \eqref{eqn:sym-original}-\eqref{eqn:asym-reg}. We assume without loss of generality that $\nabla f_s(M)$ is symmetric for every $M\in\mathbb{R}^{n\times n}$. This is because we can always optimize the equivalent problem
\[ \min_{M\in\mathbb{R}^{n\times n}}~ \frac12\left[f_s(M) + f_s(M^T)\right] \quad \mathrm{s.t.}~ \rank(M) \leq r,\quad M^T = M,\quad M\succeq0. \]
We first consider problems \eqref{eqn:sym-original} and \eqref{eqn:asym-original}.
\begin{theorem}[\cite{li2019non,ha2020equivalence}]
The matrix $\tilde{M} = \tilde{U}\tilde{U}^T$ with $\tilde{U}\in\mathbb{R}^{n\times r}$ is a first-order critical point of problem \eqref{eqn:sym-original} if and only if
\begin{align*}
    \begin{cases} \nabla f_s(\tilde{M})\tilde{U} = 0 &\text{if }\rank(\tilde{M}) = r\\ 
    \nabla f_s(\tilde{M}) \succeq 0 &\text{if }\rank(\tilde{M}) < r. \end{cases}
\end{align*}
The matrix $\tilde{M} = \tilde{U}\tilde{V}^T$ with $\tilde{U}\in\mathbb{R}^{n\times r}$ and $\tilde{V}\in\mathbb{R}^{m\times r}$ is a first-order critical point of problem \eqref{eqn:asym-original} if and only if
\begin{align*}
    \begin{cases}[\nabla f_a(\tilde{M})]^T\tilde{U} = 0, ~ \nabla f_a(\tilde{M}) \tilde{V} = 0 &\text{if }\rank(\tilde{M}) = r\\
    \nabla f_a(\tilde{M}) = 0 &\text{if }\rank(\tilde{M}) < r.\end{cases}
\end{align*}
\end{theorem}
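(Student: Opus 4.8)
The plan is to read ``$\tilde M$ is a first-order critical point of \eqref{eqn:sym-original}'' (resp.\ \eqref{eqn:asym-original}) as the standard first-order stationarity condition for a smooth objective over a closed feasible set $\mathcal M$: namely $\langle \nabla f(\tilde M),D\rangle\ge 0$ for every $D$ in the Bouligand (contingent) tangent cone $T_{\tilde M}\mathcal M$, where $\mathcal M\in\{\mathcal M_{sym},\mathcal M_{asym}\}$. The whole proof then amounts to two tasks: (i) compute $T_{\tilde M}\mathcal M$ explicitly; and (ii) translate the variational inequality $\langle\nabla f(\tilde M),\cdot\rangle\ge0$ on $T_{\tilde M}\mathcal M$ into the stated algebraic conditions by inner-product bookkeeping, using the standing convention that $\nabla f_s$ is symmetric. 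Since the shape of $T_{\tilde M}\mathcal M$ changes qualitatively according to whether $\rank(\tilde M)=r$ or $\rank(\tilde M)<r$, I would split the argument into these two cases and treat the symmetric and asymmetric problems in parallel.

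\textbf{Full-rank case.} When $\rank(\tilde M)=r$, any feasible matrix within distance $\sigma_r(\tilde M)>0$ of $\tilde M$ still has rank exactly $r$ (Eckart--Young), so near $\tilde M$ the feasible set coincides with the smooth manifold of rank-$r$ matrices (rank-$r$ positive semidefinite matrices in the symmetric case), and $T_{\tilde M}\mathcal M$ is the corresponding tangent \emph{space}: for $\mathcal M_{asym}$ it is $\{A\tilde V^T+\tilde U B^T:\ A\in\mathbb R^{n\times r},\ B\in\mathbb R^{m\times r}\}$, and for $\mathcal M_{sym}$ it is $\{\tilde U Z^T+Z\tilde U^T:\ Z\in\mathbb R^{n\times r}\}$. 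Stationarity says $\nabla f(\tilde M)$ is orthogonal to this subspace; expanding $\langle\nabla f_a(\tilde M),\tilde U B^T\rangle=\langle[\nabla f_a(\tilde M)]^T\tilde U,B\rangle$ and $\langle\nabla f_a(\tilde M),A\tilde V^T\rangle=\langle\nabla f_a(\tilde M)\tilde V,A\rangle$ over all $A,B$ gives $[\nabla f_a(\tilde M)]^T\tilde U=0$ and $\nabla f_a(\tilde M)\tilde V=0$; in the symmetric case, using $\nabla f_s(\tilde M)=\nabla f_s(\tilde M)^T$, orthogonality to all $\tilde U Z^T+Z\tilde U^T$ gives $\nabla f_s(\tilde M)\tilde U=0$. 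These are exactly the first branches of the two displayed conditions, and every step is reversible.

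\textbf{Rank-deficient case.} Take the compact SVD $\tilde M=\hat U\Sigma\hat V^T$ with $\Sigma\succ0$ of size $s:=\rank(\tilde M)<r$ (in the symmetric case $\hat V=\hat U$, so $\tilde M=\hat U\Sigma\hat U^T$), and complete to orthonormal bases $[\hat U,\hat U_\perp]$, $[\hat V,\hat V_\perp]$. In these block coordinates a nearby matrix reads $\left(\begin{smallmatrix}A&B\\ C&D\end{smallmatrix}\right)$ with $A$ near $\Sigma$, so by the Schur complement its rank is $s+\rank(D-CA^{-1}B)$ (and in the symmetric case $C=B^T$, with positive semidefiniteness equivalent to $D-B^TA^{-1}B\succeq0$). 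Differentiating, I would prove
\[
 T_{\tilde M}\mathcal M=\left\{\hat U\dot A\hat V^T+\hat U\dot B\hat V_\perp^T+\hat U_\perp\dot C\hat V^T+\hat U_\perp\dot D\hat V_\perp^T\ :\ \dot A,\dot B,\dot C\ \text{arbitrary},\ \rank(\dot D)\le r-s\right\},
\]
with the additional restrictions $\dot A=\dot A^T$, $\dot C=\dot B^T$, $\dot D\succeq0$ in the symmetric case. The inclusion ``$\supseteq$'' is obtained by exhibiting, for each such direction, the curve $\gamma(t)=\left(\begin{smallmatrix}\Sigma+t\dot A& t\dot B\\ t\dot C& t\dot D+t^2\dot C\Sigma^{-1}\dot B\end{smallmatrix}\right)$ (with $\dot C=\dot B^T$ in the symmetric case), whose Schur complement is exactly $t\dot D$, so $\gamma(t)\in\mathcal M$ for small $t\ge0$ and $\gamma'(0)$ is the prescribed direction; the inclusion ``$\subseteq$'' follows from the Schur-complement rank identity and lower semicontinuity of $\rank$ (plus, in the symmetric case, the fact that a principal submatrix of a positive semidefinite matrix is positive semidefinite, which forces $\dot D\succeq0$). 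Now impose $\langle\nabla f(\tilde M),D\rangle\ge0$ on all $D\in T_{\tilde M}\mathcal M$: the free blocks $\dot A,\dot B,\dot C$ (each usable with either sign) force the corresponding blocks of $\nabla f(\tilde M)$ in the $[\hat U,\hat U_\perp]\times[\hat V,\hat V_\perp]$ decomposition to vanish, and since $r-s\ge1$ the rank-one choice $\dot D=uv^T$ (arbitrary $u,v$, either sign) forces $\hat U_\perp^T\nabla f_a(\tilde M)\hat V_\perp=0$, hence $\nabla f_a(\tilde M)=0$ in the asymmetric case. In the symmetric case only $\dot D=ww^T\succeq0$ is available for the last block, which forces $\hat U_\perp^T\nabla f_s(\tilde M)\hat U_\perp\succeq0$; combined with the vanishing of the $\hat U$-blocks this is precisely $\nabla f_s(\tilde M)\succeq0$ (equivalently $\nabla f_s(\tilde M)=\hat U_\perp G\hat U_\perp^T$ with $G\succeq0$, which in particular yields $\nabla f_s(\tilde M)\tilde U=0$). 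Both converses are immediate: if $\nabla f_a(\tilde M)=0$ then $\langle\nabla f_a(\tilde M),D\rangle=0$ for all $D$, and in the symmetric case $\langle\nabla f_s(\tilde M),D\rangle=\langle\hat U_\perp^T\nabla f_s(\tilde M)\hat U_\perp,\dot D\rangle\ge0$ because $\dot D\succeq0$.

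\textbf{Expected main obstacle.} The delicate part is pinning down $T_{\tilde M}\mathcal M$ in the rank-deficient case. The ``$\supseteq$'' direction genuinely needs the curved path $\gamma(t)$: a straight-line perturbation generally fails, since subtracting the curvature term $t^2\dot C\Sigma^{-1}\dot B$ can raise the rank of the $(2,2)$-block past $r-s$ or, in the symmetric case, destroy positive semidefiniteness. The ``$\subseteq$'' direction needs the exact Schur-complement rank identity together with lower semicontinuity of $\rank$ to control that block. A secondary subtlety, specific to the symmetric problem, is verifying that positive semidefiniteness propagates only to $\dot D\succeq0$ and imposes no further constraint on the other blocks, so that stationarity collapses to the single inequality $\nabla f_s(\tilde M)\succeq0$; the companion identity $\nabla f_s(\tilde M)\tilde U=0$ then comes along for free at a stationary point because the radial directions $\pm\tilde M\in T_{\tilde M}\mathcal M_{sym}$ give $\langle\nabla f_s(\tilde M),\tilde M\rangle=0$, and two positive semidefinite matrices whose trace inner product is zero have zero product. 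Everything else is routine inner-product algebra, streamlined by the assumption that $\nabla f_s$ is symmetric.
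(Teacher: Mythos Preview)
The paper does not give a proof of this theorem; it simply cites \cite{li2019non,ha2020equivalence} and states the result. Your plan---interpret first-order criticality as $\langle\nabla f(\tilde M),D\rangle\ge 0$ for all $D$ in the Bouligand tangent cone, compute that cone via a Schur-complement parametrization in the $[\hat U,\hat U_\perp]\times[\hat V,\hat V_\perp]$ coordinates, and read off the block conditions---is exactly the standard route to such characterizations and is the right approach.

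Two points deserve attention. First, a small technical slip: your curve $\gamma(t)$ with $D(t)=t\dot D+t^2\dot C\Sigma^{-1}\dot B$ gives Schur complement $t\dot D+t^3\dot C(\Sigma+t\dot A)^{-1}\dot A\Sigma^{-1}\dot B$, not exactly $t\dot D$, and the $O(t^3)$ correction can push the rank of that block above $r-s$. The fix is easy: take $D(t)=t\dot D+C(t)A(t)^{-1}B(t)$ so that the Schur complement is literally $t\dot D$; the derivative at $t=0$ is unchanged.

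Second, and more substantive, your converse in the symmetric rank-deficient case is incomplete. You write $\langle\nabla f_s(\tilde M),D\rangle=\langle\hat U_\perp^T\nabla f_s(\tilde M)\hat U_\perp,\dot D\rangle$, but that equality already presumes the $\hat U$-blocks of $\nabla f_s(\tilde M)$ vanish, which does not follow from $\nabla f_s(\tilde M)\succeq 0$ alone. Indeed, under the Bouligand definition the implication ``$\nabla f_s(\tilde M)\succeq 0\Rightarrow\tilde M$ stationary'' is false when $0<\rank(\tilde M)<r$: with $\tilde M=\hat U\Sigma\hat U^T\ne 0$ the direction $-\hat U\hat U^T$ lies in the tangent cone (shrink $\Sigma$), and $\langle\nabla f_s(\tilde M),-\hat U\hat U^T\rangle=-\tr(\hat U^T\nabla f_s(\tilde M)\hat U)$ can be strictly negative. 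What your forward argument actually proves is that Bouligand stationarity is equivalent to $\nabla f_s(\tilde M)\succeq 0$ \emph{together with} $\nabla f_s(\tilde M)\tilde U=0$; your closing remark that the latter ``comes along for free at a stationary point'' is correct for the necessity direction but does not rescue the sufficiency of $\nabla f_s(\tilde M)\succeq 0$ by itself. Either the theorem as quoted is using a weaker notion of criticality (e.g., a limiting/Mordukhovich normal cone, for which the rank-deficient stratum contributes extra normals), or it tacitly bundles $\nabla f_s(\tilde M)\tilde U=0$ into the rank-deficient case; you should make explicit which reading you adopt and adjust the converse accordingly.
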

In \cite{ha2020equivalence}, the authors proved that each second-order critical point of problem \eqref{eqn:asym} or \eqref{eqn:asym-reg} is a fixed point of the SVP algorithm run on problem \eqref{eqn:asym-original}. We note that this relation can be extended to the symmetric and positive semi-definite case. This relation plays an important role in the analysis of Section \ref{sec:unique}. 
\begin{theorem}[\cite{ha2020equivalence}]\label{thm:fixed-pt}
The matrix $\tilde{M} = \tilde{U}\tilde{U}^T$ with $\tilde{U}\in\mathbb{R}^{n\times r}$ is a fixed point of the SVP algorithm run on problem \eqref{eqn:sym-original} with the step size $1/(1+\delta)$ if and only if
\begin{align*}
    \nabla f_s(\tilde{M})\tilde{U} = 0, \quad -\lambda_{min}(\nabla f_s(\tilde{M})) \leq (1+\delta) \sigma_r(\tilde{U}).
\end{align*}
The matrix $\tilde{M} = \tilde{U}\tilde{V}^T$ with $\tilde{U}\in\mathbb{R}^{n\times r}$ and $\tilde{V}\in\mathbb{R}^{m\times r}$ is a fixed point of the SVP algorithm run on problem \eqref{eqn:asym-original} with the step size $1/(1+\delta)$ if and only if
\begin{align*}
    [\nabla f_a(\tilde{M})]^T\tilde{U} = 0, \quad \nabla f_a(\tilde{M}) \tilde{V} = 0,\quad \|\nabla f_a(\tilde{M})\|_2 \leq (1+\delta) \sigma_r(\tilde{M}).
\end{align*}
\end{theorem}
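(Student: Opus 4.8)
The plan is to recast ``$\tilde M$ is a fixed point of Algorithm~\ref{alg:svp}'' as a variational statement and then read the two groups of claimed relations --- the stationarity identities and the spectral inequality --- off that statement, handling the symmetric and asymmetric cases in parallel. With step size $\eta=(1+\delta)^{-1}$, $\tilde M$ is a fixed point exactly when $\tilde M$ is a closest point of $\mathcal M$ to $N:=\tilde M-\eta\,\nabla f(\tilde M)$ in Frobenius norm, i.e.\ a point returned by the truncated-SVD projection of $N$ onto $\mathcal M$. Expanding $\tfrac{1}{2\eta}\|K-N\|_F^2$ and dropping the part independent of $K$, this is equivalent to
\[ \tilde M\in\arg\min_{K\in\mathcal M} Q(K),\qquad Q(K):=\langle \nabla f(\tilde M),K\rangle+\tfrac{1}{2\eta}\|K-\tilde M\|_F^2. \]
In the symmetric case this uses the standing normalization that $\nabla f_s(\tilde M)$ is symmetric, so that $N$ lies in the ambient space of $\mathcal M_{sym}$ and the eigenvalue truncation is the relevant projection; in the asymmetric case $\mathcal M=\mathcal M_{asym}$ and the truncated SVD is the relevant projection.

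For \emph{necessity} I would probe $Q$ along admissible competitors near $\tilde M$. Writing $\tilde M=\tilde U\tilde U^T$ (resp.\ $\tilde M=\tilde U\tilde V^T$) with full-rank factors, optimality of $\tilde M$ along the smooth curves $t\mapsto(\tilde U+tB)(\tilde U+tB)^T$ (resp.\ $t\mapsto(\tilde U+tX)(\tilde V+tY)^T$), which stay in $\mathcal M$ for small $|t|$, forces the first derivative of $Q$ to vanish; since the $\|\cdot\|_F^2$ term only contributes at second order, this yields $\nabla f_s(\tilde M)\tilde U=0$ (resp.\ $[\nabla f_a(\tilde M)]^T\tilde U=0$ and $\nabla f_a(\tilde M)\tilde V=0$). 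For the spectral inequality I would compare $Q(\tilde M)$ with $Q(K)$, where $K$ is obtained from $\tilde M$ by deleting one retained spectral component (value $\lambda_j$, resp.\ $\sigma_j$) and inserting a rank-one term $\tau\,uu^T$ (resp.\ $\tau\,uw^T$), with $\tau\ge0$ and $u$ (and $w$) unit vectors orthogonal to $\operatorname{range}(\tilde M)$ (and to its row space); such $K$ lies in $\mathcal M$. Since $\nabla f(\tilde M)$ annihilates $\operatorname{range}(\tilde M)$ by the identities just derived, the cross terms drop and $Q(K)-Q(\tilde M)=\tau\,(u^T\nabla f_s(\tilde M)u)+\tfrac{1}{2\eta}(\lambda_j^2+\tau^2)$ (resp.\ $\tau\,(u^T\nabla f_a(\tilde M)w)+\tfrac{1}{2\eta}(\sigma_j^2+\tau^2)$); minimizing over $\tau\ge0$ and over the admissible direction(s) forces $-\eta\,\lambda_{min}(\nabla f_s(\tilde M))\le\lambda_j$ (resp.\ $\eta\,\|\nabla f_a(\tilde M)\|_2\le\sigma_j$), and taking $j$ to be the smallest retained index reproduces the stated $\sigma_r$ bound. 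When $\rank(\tilde M)<r$ there are additionally competitors $\tilde M+\tau\,uu^T$ (resp.\ $\tilde M+\tau\,uw^T$) in brand-new directions, whose first-order comparison forces $\nabla f_s(\tilde M)\succeq0$ (resp.\ $\nabla f_a(\tilde M)=0$) --- precisely the stated inequality with right-hand side zero, since then the $\sigma_r$ term vanishes.

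For \emph{sufficiency} I would verify the projection identity directly. The stationarity identities make $N$ block-diagonal --- with respect to $\operatorname{range}(\tilde M)\oplus\operatorname{range}(\tilde M)^\perp$ in the symmetric case, and with respect to the product of the column and row spaces of $\tilde M$ and their complements in the asymmetric case: on the $\tilde M$-block, $N$ coincides with $\tilde M$ (eigenvalues $\lambda_i(\tilde M)>0$, resp.\ singular values $\sigma_i(\tilde M)$), while on the complementary block $N$ equals $-\eta\,\nabla f_s(\tilde M)$ (resp.\ $-\eta\,\nabla f_a(\tilde M)$). After absorbing the factor $\eta=(1+\delta)^{-1}$, the stated bound says exactly that every eigenvalue (resp.\ singular value) contributed by the complementary block is at most the smallest value of $\tilde M$ that is kept, while any negative eigenvalues fall below zero and are discarded; hence, by Eckart--Young / Ky~Fan, the truncated-SVD projection of $N$ onto $\mathcal M$ returns $\tilde M$. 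When $\rank(\tilde M)<r$ the hypothesis degenerates to $\nabla f_s(\tilde M)\succeq0$ (resp.\ $\nabla f_a(\tilde M)=0$), so the complementary block contributes nothing positive and the conclusion follows at once.

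I expect the necessity direction to be the main obstacle, because $\mathcal M$ is non-convex: one must describe the cone of first-order feasible directions at $\tilde M$ carefully, distinguishing $\rank(\tilde M)=r$ from $\rank(\tilde M)<r$ and incorporating the positive semidefiniteness constraint in the symmetric case, and one must contend with the truncated SVD being defined only up to ties in the spectrum --- so ``fixed point'' should be read as ``$\tilde M$ is an admissible output of the projection'' and the relevant inequalities are the non-strict ones that survive small perturbations. Once these feasible directions are in hand, both implications come down to one-variable minimizations and a short spectral-interlacing bookkeeping.
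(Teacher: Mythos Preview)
The paper does not supply its own proof of this theorem; it is quoted from \cite{ha2020equivalence} and used as a black box in the analysis of Section~\ref{sec:unique}. So there is no in-paper argument to compare against, and your proposal stands on its own. The variational reformulation $\tilde M\in\arg\min_{K\in\mathcal M}Q(K)$ is exactly the right starting point, and both halves of your plan are sound: the smooth curves through $\tilde M$ give the stationarity identities, the ``swap one spectral component for an orthogonal rank-one term'' competitors give the spectral inequality, and the block-diagonal structure of $N=\tilde M-\eta\nabla f(\tilde M)$ together with Eckart--Young closes the sufficiency direction. Your treatment of the rank-deficient case (adding a fresh rank-one term without removing anything) is also correct and recovers the degenerate form of the inequality.

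One point worth flagging: carrying your symmetric-case computation through, the comparison $Q(K)\ge Q(\tilde M)$ with the swap competitor yields $-\eta\,\lambda_{\min}(\nabla f_s(\tilde M))\le\lambda_r(\tilde M)=\sigma_r(\tilde U)^2$, i.e.\ $-\lambda_{\min}(\nabla f_s(\tilde M))\le(1+\delta)\,\sigma_r(\tilde U)^2$, not $(1+\delta)\,\sigma_r(\tilde U)$ as printed. This is consistent with the asymmetric line of the statement (which has $\sigma_r(\tilde M)$) and with how the result is actually used later in the paper, so the printed exponent is almost certainly a typo rather than a flaw in your argument.
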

Next, we consider problems \eqref{eqn:asym}-\eqref{eqn:asym-reg}. the goal is to study only spurious local minima and saddle points, it is enough to focus on the second-order necessary optimality conditions. The following two theorems follow from basic calculations and we omit the proof.
\begin{theorem}
The matrix $\tilde{U}\in\mathbb{R}^{n\times r}$ is a second-order critical point of problem \eqref{eqn:sym} if and only if
\begin{align*}
    \nabla f_s(\tilde{U}\tilde{U}^T)\tilde{U} = 0
\end{align*}
and
\begin{align*}
    2\langle \nabla f_s(\tilde{U}\tilde{U}^T), \Delta\Delta^T \rangle + [\nabla^2 f_s(\tilde{U}\tilde{U}^T)]( \tilde{U}\Delta^T + \Delta \tilde{U}^T, \tilde{U}\Delta^T + \Delta \tilde{U}^T ) \geq 0
\end{align*}
hold for every $\Delta \in \mathbb{R}^{n\times r}$. 
\end{theorem}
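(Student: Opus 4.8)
The plan is to treat this as the textbook first- and second-order optimality characterization for the unconstrained problem $\min_{U\in\mathbb{R}^{n\times r}}h_s(U)$ with $h_s(U):=f_s(UU^T)$: I would compute $Dh_s$ and $D^2h_s$ by the chain rule and then match them against the definition of a second-order critical point (gradient zero, Hessian quadratic form nonnegative in every direction), which is exactly what makes the ``if and only if'' hold verbatim. No RIP property or convexity is used.

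First I would record the derivatives of the inner map $\phi(U):=UU^T$. Expanding $\phi(U+\Delta)=UU^T+(U\Delta^T+\Delta U^T)+\Delta\Delta^T$ shows that $\phi$ is quadratic, so $D\phi(U)[\Delta]=U\Delta^T+\Delta U^T$ and $D^2\phi(U)[\Delta,\Delta]=2\Delta\Delta^T$, with no higher-order remainder. Composing with $f_s$, the first-order chain rule gives $Dh_s(U)[\Delta]=\langle\nabla f_s(UU^T),\,U\Delta^T+\Delta U^T\rangle$. Using cyclic and transpose invariance of the trace together with the symmetry of $\nabla f_s$ (assumed without loss of generality; see Appendix \ref{sec:conditions}), both cross terms collapse to $\langle\nabla f_s(UU^T)\,\tilde U,\Delta\rangle$ after substituting $U=\tilde U$, so $Dh_s(\tilde U)[\Delta]=2\langle\nabla f_s(\tilde U\tilde U^T)\tilde U,\Delta\rangle$. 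Since $\Delta\in\mathbb{R}^{n\times r}$ is arbitrary, stationarity $Dh_s(\tilde U)=0$ is equivalent to $\nabla f_s(\tilde U\tilde U^T)\tilde U=0$, the first displayed equation.

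Next I would differentiate once more. The second-order chain rule (product rule) applied to $h_s=f_s\circ\phi$ gives
\[
D^2h_s(U)[\Delta,\Delta]=[\nabla^2 f_s(UU^T)]\bigl(D\phi(U)[\Delta],D\phi(U)[\Delta]\bigr)+\bigl\langle\nabla f_s(UU^T),\,D^2\phi(U)[\Delta,\Delta]\bigr\rangle,
\]
and substituting the two derivatives of $\phi$ computed above turns the right-hand side into exactly $[\nabla^2 f_s(\tilde U\tilde U^T)](\tilde U\Delta^T+\Delta\tilde U^T,\tilde U\Delta^T+\Delta\tilde U^T)+2\langle\nabla f_s(\tilde U\tilde U^T),\Delta\Delta^T\rangle$. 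Because the problem is unconstrained over $\mathbb{R}^{n\times r}$, a second-order critical point is by definition a point at which $Dh_s(\tilde U)=0$ and $D^2h_s(\tilde U)[\Delta,\Delta]\ge0$ for every $\Delta$; combining the two computations yields the stated equivalence in both directions.

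I do not expect a genuine obstacle. The only points needing care are (i) keeping \emph{both} terms of the second-order chain rule — the ``outer Hessian'' term $[\nabla^2 f_s](D\phi,D\phi)$ and the ``inner curvature'' term $\langle\nabla f_s,D^2\phi\rangle=2\langle\nabla f_s,\Delta\Delta^T\rangle$ — since dropping the latter is the most common slip; and (ii) invoking the symmetry of $\nabla f_s$ to merge the two cross terms so that the first-order condition comes out in the clean matrix form $\nabla f_s(\tilde U\tilde U^T)\tilde U=0$ rather than a symmetrized surrogate.
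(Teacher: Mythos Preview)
Your proposal is correct and matches the paper's approach: the paper states that this theorem ``follows from basic calculations and we omit the proof,'' and your chain-rule computation of $Dh_s$ and $D^2h_s$ via $\phi(U)=UU^T$ is precisely that basic calculation. The two care points you flag (retaining the $\langle\nabla f_s,\Delta\Delta^T\rangle$ term and using the symmetry of $\nabla f_s$) are the only subtleties, and you handle both correctly.
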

\begin{theorem}\label{thm:asym-condition}
The point $(\tilde{U},\tilde{V})$ with $\tilde{U}\in\mathbb{R}^{n\times r}$ and $\tilde{V}\in\mathbb{R}^{m\times r}$ is a second-order critical point of problem \eqref{eqn:asym} if and only if
\begin{align*}
    \nabla [f_a(\tilde{U}\tilde{V}^T)]^T \tilde{U} = 0,\quad \nabla f_a(\tilde{U}\tilde{V}^T)\tilde{V} = 0
\end{align*}
and
\begin{align*}
    2\langle \nabla f_a(\tilde{U}\tilde{V}^T), \Delta_U\Delta_V^T \rangle + [\nabla^2 f_a(\tilde{U}\tilde{V}^T)]( \tilde{U}\Delta_V^T + \Delta_U \tilde{V}^T, \tilde{U}\Delta_V^T + \Delta_U \tilde{V}^T ) \geq 0
\end{align*}
hold for every $\Delta_U \in \mathbb{R}^{n\times r}$ and $ \Delta_V \in \mathbb{R}^{m\times r}$. Moreover, the given point is a a second-order critical point of problem \eqref{eqn:asym-reg} if and only if 
\begin{align*}
    \nabla [f_a(\tilde{U}\tilde{V}^T)]^T \tilde{U} = 0,\quad \nabla f_a(\tilde{U}\tilde{V}^T)\tilde{V} = 0,\quad \tilde{U}^T\tilde{U} = \tilde{V}^T\tilde{V}
\end{align*}
and
\begin{align*}
    &2\langle \nabla f_a(\tilde{U}\tilde{V}^T), \Delta_U\Delta_V^T \rangle + [\nabla^2 f_a(\tilde{U}\tilde{V}^T)]( \tilde{U}\Delta_V^T + \Delta_U \tilde{V}^T, \tilde{U}\Delta_V^T + \Delta_U \tilde{V}^T )\\
    &\hspace{16em} + \frac{\mu}{2} \|\tilde{U}^T\Delta_U + \Delta_U^T\tilde{U} - \tilde{V}^T\Delta_V - \Delta_V^T \tilde{V} \|_F^2 \geq 0
\end{align*}
hold for every $\Delta_U \in \mathbb{R}^{n\times r}$ and $ \Delta_V \in \mathbb{R}^{m\times r}$.
\end{theorem}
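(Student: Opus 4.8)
The plan is to treat problems \eqref{eqn:asym} and \eqref{eqn:asym-reg} as genuinely unconstrained smooth problems over $\mathbb{R}^{n\times r}\times\mathbb{R}^{m\times r}$ and to read off the stated conditions by differentiating the composite objectives with the chain rule. Write $\phi(U,V):=UV^T$. Since $\phi$ is bilinear, $\phi(\tilde U+t\Delta_U,\tilde V+t\Delta_V)=\tilde U\tilde V^T+t(\Delta_U\tilde V^T+\tilde U\Delta_V^T)+t^2\Delta_U\Delta_V^T$, with first derivative $\Delta_U\tilde V^T+\tilde U\Delta_V^T$ and constant second derivative $2\Delta_U\Delta_V^T$ at $t=0$. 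For $h_a=f_a\circ\phi$, the first variation at $(\tilde U,\tilde V)$ in the direction $(\Delta_U,\Delta_V)$ is $\langle\nabla f_a(\tilde U\tilde V^T),\,\Delta_U\tilde V^T+\tilde U\Delta_V^T\rangle$; using $\langle A,\Delta_U\tilde V^T\rangle=\langle A\tilde V,\Delta_U\rangle$ and $\langle A,\tilde U\Delta_V^T\rangle=\langle A^T\tilde U,\Delta_V\rangle$, this equals $\langle\nabla f_a(\tilde U\tilde V^T)\tilde V,\Delta_U\rangle+\langle[\nabla f_a(\tilde U\tilde V^T)]^T\tilde U,\Delta_V\rangle$, so $\nabla h_a(\tilde U,\tilde V)=0$ is precisely $\nabla f_a(\tilde U\tilde V^T)\tilde V=0$ and $[\nabla f_a(\tilde U\tilde V^T)]^T\tilde U=0$.

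For the second-order part, the chain rule for the Hessian of a composition gives $\nabla^2 h_a=(D\phi)^*(\nabla^2 f_a)(D\phi)+\langle\nabla f_a,D^2\phi\rangle$. Evaluating the first summand on $(\Delta_U,\Delta_V)$ yields $[\nabla^2 f_a(\tilde U\tilde V^T)](\tilde U\Delta_V^T+\Delta_U\tilde V^T,\tilde U\Delta_V^T+\Delta_U\tilde V^T)$, and the second summand yields $2\langle\nabla f_a(\tilde U\tilde V^T),\Delta_U\Delta_V^T\rangle$ because $D^2\phi$ acting on $((\Delta_U,\Delta_V),(\Delta_U,\Delta_V))$ is the constant $2\Delta_U\Delta_V^T$; note this term survives even though $\nabla h_a=0$, since $\nabla f_a(\tilde U\tilde V^T)$ itself need not vanish. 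Hence the Hessian quadratic form of $h_a$ at $(\tilde U,\tilde V)$ is exactly the left-hand side of the displayed second-order inequality, and because \eqref{eqn:asym} is unconstrained, a first-order critical point is a second-order critical point iff this form is nonnegative for all $(\Delta_U,\Delta_V)$. This establishes the characterization for \eqref{eqn:asym}.

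For $\rho=h_a+\tfrac{\mu}{4}g$ with $g(U,V)=\|U^TU-V^TV\|_F^2$, I would differentiate $g$. Setting $E:=\tilde U^T\tilde U-\tilde V^T\tilde V$ (which is symmetric), a short computation gives $\nabla_U g=4\tilde U E$ and $\nabla_V g=-4\tilde V E$, so $\nabla\rho(\tilde U,\tilde V)=0$ reads $\nabla f_a(\tilde U\tilde V^T)\tilde V+\mu\tilde U E=0$ and $[\nabla f_a(\tilde U\tilde V^T)]^T\tilde U-\mu\tilde V E=0$. Left-multiplying the first equation by $\tilde U^T$ and the second by $\tilde V^T$, transposing the latter and subtracting eliminates the $\nabla f_a$ terms and leaves $(\tilde U^T\tilde U)E+E(\tilde V^T\tilde V)=0$, i.e. $(\tilde U^T\tilde U)^2=(\tilde V^T\tilde V)^2$; since both factors are positive semidefinite and the PSD square root is unique, this forces $\tilde U^T\tilde U=\tilde V^T\tilde V$, i.e. $E=0$ (alternatively, this is exactly the equivalence property of \cite{zhu2018global} quoted above and may simply be cited). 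Thus the first-order conditions for \eqref{eqn:asym-reg} reduce to $\nabla f_a(\tilde U\tilde V^T)\tilde V=0$, $[\nabla f_a(\tilde U\tilde V^T)]^T\tilde U=0$ and $\tilde U^T\tilde U=\tilde V^T\tilde V$.

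Finally, expanding the balance matrix along $(\Delta_U,\Delta_V)$ gives $E(t)=E+tF+t^2G$ with $F:=\tilde U^T\Delta_U+\Delta_U^T\tilde U-\tilde V^T\Delta_V-\Delta_V^T\tilde V$ and $G:=\Delta_U^T\Delta_U-\Delta_V^T\Delta_V$, so $\frac{d^2}{dt^2}\big|_{t=0}g=2\|F\|_F^2+4\langle E,G\rangle$; at a first-order critical point $E=0$, hence $\tfrac{\mu}{4}$ times this quantity equals $\tfrac{\mu}{2}\|F\|_F^2$. Adding this to the Hessian form of $h_a$ computed above yields exactly the stated second-order condition for \eqref{eqn:asym-reg}. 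Essentially all of this is routine chain-rule bookkeeping; the one step with any content is the deduction $(\tilde U^T\tilde U)^2=(\tilde V^T\tilde V)^2\Rightarrow\tilde U^T\tilde U=\tilde V^T\tilde V$ (equivalently, invoking the cited equivalence theorem), after which the regularization term contributes only the clean quadratic $\tfrac{\mu}{2}\|F\|_F^2$ because $E$ vanishes.
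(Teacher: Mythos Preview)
Your proposal is correct and is exactly the kind of direct chain-rule computation the paper has in mind; indeed, the paper explicitly omits the proof, writing that the theorem ``follow[s] from basic calculations.'' Your derivation of $\tilde U^T\tilde U=\tilde V^T\tilde V$ from the first-order conditions of the regularized problem is precisely the content of the cited result of \cite{zhu2018global} (stated earlier in the paper), so either proving it as you did or simply invoking that theorem is acceptable.
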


\section{Relation between the Symmetric and Asymmetric Problems}
\label{sec:relation}

In this section, we study the relationship between problems \eqref{eqn:sym}-\eqref{eqn:asym-reg}. This relationship is more general than the topic of this paper, namely the non-existence of spurious second-order critical points and the strict saddle property, and holds for any property that is characterized by the RIP constant $\delta$ and the BDP constant $\kappa$. Specifically, we show that any property that holds for the symmetric problems \eqref{eqn:sym} with $(\delta,\kappa)$ also holds for the regularized asymmetric problem \eqref{eqn:asym-reg} with another pair of constants $(\tilde{\delta},\tilde{\kappa})$ decided by $\delta,\kappa$, and vice versa.

We first consider the transformation from the asymmetric case to the symmetric case. The transformation to the symmetric case has been established in \cite{ge2017no} for linear problem. Here, we show that the transformation can be revised and extended to the nonlinear measurements case.
\begin{theorem}\label{thm:reduction}
Suppose that the function $f_a(\cdot)$ satisfies the $\delta$-RIP$_{2r,2s}$ and the $\kappa$-BDP$_{2t}$ properties. If we choose $\mu := (1-\delta)/2$, then problem \eqref{eqn:asym-reg} is equivalent to a symmetric problem whose objective function satisfies the $2\delta/(1+\delta)$-RIP$_{2r,2s}$ and the $2\kappa/(1+\delta)$-BDP$_{2t}$ properties.
\end{theorem}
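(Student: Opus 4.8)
The plan is to realize \eqref{eqn:asym-reg} as a symmetric Burer--Monteiro problem in dimension $N:=n+m$, obtained by stacking $U$ on top of $V$ into a single factor $W\in\mathbb{R}^{N\times r}$, and then to transport the RIP and BDP constants through this lift. Concretely, for a symmetric $Z\in\mathbb{R}^{N\times N}$ written in $2\times 2$ block form with diagonal blocks $Z_{11}\in\mathbb{R}^{n\times n}$, $Z_{22}\in\mathbb{R}^{m\times m}$ and off-diagonal block $Z_{12}\in\mathbb{R}^{n\times m}$, I would set
\[ f_s(Z) \;:=\; \frac{4}{1+\delta}\,f_a(Z_{12}) \;+\; \frac{1-\delta}{2(1+\delta)}\Bigl(\|Z_{11}\|_F^2-2\|Z_{12}\|_F^2+\|Z_{22}\|_F^2\Bigr), \]
(replacing $f_a(Z_{12})$ by $\tfrac12[f_a(Z_{12})+f_a(Z_{21}^T)]$ if one insists on the symmetric-gradient convention of Appendix~\ref{sec:conditions}; on symmetric arguments this changes nothing). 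The first thing to verify is the algebraic identity $\|U^TU-V^TV\|_F^2=\|UU^T\|_F^2-2\|UV^T\|_F^2+\|VV^T\|_F^2$, which follows from $\|U^TU\|_F=\|UU^T\|_F$ and $\langle U^TU,V^TV\rangle=\|UV^T\|_F^2$. Substituting $Z=WW^T$, so that $Z_{11}=UU^T$, $Z_{12}=UV^T$, $Z_{22}=VV^T$, and using $\mu=(1-\delta)/2$, the identity gives $f_s(WW^T)=\frac{4}{1+\delta}\,\rho(U,V)$. Since $W\leftrightarrow(U,V)$ is a bijection between $\mathbb{R}^{N\times r}$ and $\mathbb{R}^{n\times r}\times\mathbb{R}^{m\times r}$, the symmetric problem $\min_{W\in\mathbb{R}^{N\times r}} f_s(WW^T)$ (which is of the form \eqref{eqn:sym}) and \eqref{eqn:asym-reg} have the same landscape up to a positive rescaling of the objective, hence are equivalent in the sense required.

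For the RIP transfer, I would exploit that the quadratic part of $f_s$ has a constant Hessian, so that for symmetric $Z,K$ with $\rank(Z)\le 2r$ and $\rank(K)\le 2s$,
\[ [\nabla^2 f_s(Z)](K,K) \;=\; \frac{4}{1+\delta}[\nabla^2 f_a(Z_{12})](K_{12},K_{12})+\frac{1-\delta}{1+\delta}\bigl(\|K_{11}\|_F^2+\|K_{22}\|_F^2\bigr)-\frac{2(1-\delta)}{1+\delta}\|K_{12}\|_F^2. \]
Because $\rank(Z_{12})\le\rank(Z)\le 2r$ and $\rank(K_{12})\le\rank(K)\le 2s$, the $\delta$-RIP$_{2r,2s}$ of $f_a$ confines the first term to $\bigl[\tfrac{4(1-\delta)}{1+\delta}\|K_{12}\|_F^2,\,4\|K_{12}\|_F^2\bigr]$. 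Plugging this in, together with $\|K\|_F^2=\|K_{11}\|_F^2+2\|K_{12}\|_F^2+\|K_{22}\|_F^2$, the lower endpoint collapses exactly to $\tfrac{1-\delta}{1+\delta}\|K\|_F^2=(1-\tilde\delta)\|K\|_F^2$ with $\tilde\delta:=2\delta/(1+\delta)$, and the upper endpoint becomes $\tfrac{2(1+3\delta)}{1+\delta}\|K_{12}\|_F^2+\tfrac{1-\delta}{1+\delta}(\|K_{11}\|_F^2+\|K_{22}\|_F^2)$, which is at most $(1+\tilde\delta)\|K\|_F^2$ since the slack equals $\tfrac{4\delta}{1+\delta}(\|K_{11}\|_F^2+\|K_{22}\|_F^2)\ge 0$. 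This yields the claimed $\tilde\delta$-RIP$_{2r,2s}$.

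For the BDP transfer, the constant quadratic Hessian cancels in a difference, so for symmetric $Z,Z',K,L$ of rank at most $2t$ one has $[\nabla^2 f_s(Z)-\nabla^2 f_s(Z')](K,L)=\frac{4}{1+\delta}[\nabla^2 f_a(Z_{12})-\nabla^2 f_a(Z'_{12})](K_{12},L_{12})$; since $Z_{12},Z'_{12},K_{12},L_{12}$ all have rank at most $2t$, applying $\kappa$-BDP$_{2t}$ of $f_a$ and the elementary bounds $\|K_{12}\|_F\le\|K\|_F/\sqrt2$, $\|L_{12}\|_F\le\|L\|_F/\sqrt2$ gives $\bigl|[\nabla^2 f_s(Z)-\nabla^2 f_s(Z')](K,L)\bigr|\le\tfrac{2\kappa}{1+\delta}\|K\|_F\|L\|_F$, i.e. $\tilde\kappa$-BDP$_{2t}$ with $\tilde\kappa:=2\kappa/(1+\delta)$.

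The last two steps are mechanical once the surrogate is in place; the only delicate point, and the one I would pin down first, is the normalization in the definition of $f_s$. The factor $4/(1+\delta)$ combined with $\mu=(1-\delta)/2$ is precisely what makes the $\|K_{12}\|_F^2$-coefficients at the two ends of the curvature interval line up with the coefficients forced by $1\pm\tilde\delta$, so that the off-diagonal block contribution recombines with the two diagonal block contributions into a clean multiple of $\|K\|_F^2$ at the lower end and leaves only the nonnegative residue $\tfrac{4\delta}{1+\delta}(\|K_{11}\|_F^2+\|K_{22}\|_F^2)$ at the upper end. With any other scaling the two endpoints fail to close up and one is forced to a strictly larger RIP constant, so recognizing that this particular pair of constants is the matching one is the crux of the argument.
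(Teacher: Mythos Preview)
Your proposal is correct and follows essentially the same route as the paper: lift $(U,V)$ to $W=[U;V]$, build the symmetric objective as (a rescaling of) $f_a$ applied to the off-diagonal block plus a quadratic in the block norms, and then read off the curvature bounds blockwise. The only cosmetic difference is that you bake the normalization $2/(1+\delta)$ into the definition of $f_s$ from the outset, whereas the paper first bounds $[\nabla^2 F(N)](K,K)$ by $\min\{1-\delta-\mu,\mu\}\|K\|_F^2$ and $\max\{1+\delta-\mu,\mu\}\|K\|_F^2$ for general $\mu$ and general (not necessarily symmetric) $N,K$, specializes to $\mu=(1-\delta)/2$, and rescales at the end; since you acknowledge the symmetrized variant $\tfrac12[f_a(Z_{12})+f_a(Z_{21}^T)]$, extending your RIP and BDP estimates from symmetric $K$ to arbitrary $K$ (replacing $2\|K_{12}\|_F^2$ by $\|K_{12}\|_F^2+\|K_{21}\|_F^2$) is immediate and matches the paper's computation.
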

\begin{proof}[Proof of Theorem \ref{thm:reduction}]
For any matrix $N\in\mathbb{R}^{(n+m)\times(n+m)}$, we divide the matrix into four blocks as
\[ N = \begin{bmatrix} N_{11} & N_{12} \\ N_{21} & N_{22} \end{bmatrix}, \]
where $N_{11}\in\mathbb{R}^{n\times n},N_{12}\in\mathbb{R}^{n\times m},N_{22}\in\mathbb{R}^{m\times m}$. Then, we define a new function
\[ \tilde{f}(N) := f_a(N_{12}) + f_a(N_{21}^T). \]
We observe that $\tilde{f}(WW^T) = 2h_a(U,V)$, where
\[ W := \begin{bmatrix} U \\ V \end{bmatrix} \in \mathbb{R}^{(n+m) \times r}. \]
For any $K\in\mathbb{R}^{(n+m)\times(n+m)}$, the Hessian of $\tilde{f}(\cdot)$ satisfies
\begin{align}\label{eqn:2} [\nabla^2 \tilde{f}(N)](K,K) = [\nabla^2 {f}_a(N_{12})](K_{12},K_{12}) + [\nabla^2 {f}_a(N_{21}^T)](K_{21}^T,K_{21}^T). \end{align}
Similarly, we can define
\[ \tilde{g}(N) := \|N_{11}\|_F^2 + \|N_{22}\|_F^2 - \|N_{12}\|_F^2 - \|N_{21}\|_F^2. \]
We can also verify that $\tilde{g}(WW^T) = g(U,V)$ and
\begin{align}\label{eqn:3} [\nabla^2 \tilde{g}(N)](K,K) = 2\left( \|K_{11}\|_F^2 + \|K_{22}\|_F^2 - \|K_{12}\|_F^2 - \|K_{21}\|_F^2 \right). \end{align}
for every $K\in\mathbb{R}^{(n+m)\times(n+m)}$. The minimization problem \eqref{eqn:asym-reg} is then equivalent to
\begin{align}\label{eqn:asym-sym}
    \min_{W\in\mathbb{R}^{(n+m)\times r}}~ F(WW^T) := \tilde{f}(WW^T) + \frac{\mu}{2}\cdot \tilde{g}(WW^T),
\end{align}
which is in the symmetric form as problem \eqref{eqn:sym}. For every $N,K\in\mathbb{R}^{(n+m)\times(n+m)}$ with $\rank(N) \leq 2r$ and $\rank(K)\leq 2s$, it results from relations \eqref{eqn:2} and \eqref{eqn:3} that
\begin{align*}
    [\nabla^2 F(N)]&(K,K)\\
    &\geq (1-\delta)\left( \|K_{12}\|_F^2 + \|K_{21}\|_F^2\right) + \mu\left( \|K_{11}\|_F^2 + \|K_{22}\|_F^2 -\|K_{12}\|_F^2 -\|K_{21}\|_F^2 \right)\\
    &\geq \min\{ 1-\delta-\mu, \mu \} \cdot \|K\|_{F}^2
\end{align*}
and
\begin{align*}
    [\nabla^2 F(N)]&(K,K)\\
    &\leq  (1+\delta)\left( \|K_{12}\|_F^2 + \|K_{21}\|_F^2\right) + \mu\left( \|K_{11}\|_F^2 + \|K_{22}\|_F^2 -\|K_{12}\|_F^2 -\|K_{21}\|_F^2 \right)\\
    &\leq \max\{ 1+\delta-\mu, \mu \} \cdot \|K\|_{F}^2.
\end{align*}
Choosing $\mu := (1-\delta)/2$, we obtain
\[ \frac{1-\delta}{2}\cdot \|K\|_{F}^2 \leq [\nabla^2 F(N)](K,K) \leq \frac{1+3\delta}{2} \cdot \|K\|_{F}^2. \]
Hence, it follows that the function $ 2F(\cdot) / (1+\delta)$ satisfies the $2\delta/(1+\delta)$-RIP$_{2r,2s}$ property. 

Moreover, for every $N,N',K,L\in\mathbb{R}^{(n+m)\times(n+m)}$ with
\[ \rank(N),\rank(N'), \rank(K),\rank(L) \leq 2t, \]
it holds that
\begin{align*}
    [\nabla^2 \tilde{g}(N)](K,L) &= [\nabla^2 \tilde{g}(N')](K,L)\\
    &= 2\left( \langle K_{11}, L_{11}\rangle + \langle K_{22}, L_{22}\rangle - \langle K_{12}, L_{12}\rangle - \langle K_{21}, L_{21}\rangle \right)
\end{align*}
and
\begin{align*}
    &\hspace{-4em}\left| [\nabla^2 F(N) - \nabla^2 F(N')]( K,L ) \right|\\
    =& \left| [\nabla^2 {f}(N_{12}) -\nabla^2 {f}(N_{12}') ](K_{12},L_{12}) + [\nabla^2 {f}(N_{21}^T) -\nabla^2 {f}((N_{21}')^T) ](K_{21}^T,L_{21}^T) \right|\\
    \leq& \kappa \|K_{12}\|_{F}\|L_{12}\|_F + \kappa \|K_{21}\|_F\|L_{21}\|_F \leq \kappa \|K\|_F\|L\|_F,
\end{align*}
which implies that the function $\frac{2}{1+\delta} \cdot F(\cdot)$ satisfies the $2\kappa/(1+\delta)$-BDP$_{2r}$ property. Since problem \eqref{eqn:asym-sym} is equivalent to the minimization of $\frac{2}{1+\delta} \cdot F(WW^T)$, it is equivalent to a symmetric problem that satisfies the $2\delta/(1+\delta)$-RIP$_{2r,2s}$ and the $2\kappa/(1+\delta)$-BDP$_{2r}$ properties.
\end{proof}

We can see that both constants $\delta$ and $\kappa$ are approximately doubled in the transformation. As an example, \cite{bhojanapalli2016global} showed that the symmetric linear problem has no spurious local minima if the $\delta$-RIP$_{2r}$ property is satisfied with $\delta < 1/5$. Using Theorem \ref{thm:reduction}, we know that the asymmetric linear problem has no spurious local minima if the $\delta$-RIP$_{2r}$ property is satisfied with $\delta < 1/9$. 

The transformation from a symmetric problem to an asymmetric problem is more straightforward. We can equivalently solve the optimization problem
\begin{align}\label{eqn:relation-1}
    \min_{U,V\in\mathbb{R}^{n\times r}}~f_s\left[ \frac12 \left( UV^T + VU^T \right) \right]
\end{align}
or its regularized version with any parameter $\mu >0$. It can be easily shown that the above problem has the same RIP and BDP constants as the original symmetric problem. We omit the proof for brevity. 
\begin{theorem}\label{thm:reduction-2}
Suppose that the function $f_s(\cdot)$ satisfies the $\delta$-RIP$_{4r,2s}$ and the $\kappa$-BDP$_{4t}$ properties. For every $\mu > 0$, problem \eqref{eqn:sym} is equivalent to an asymmetric problem and its regularized version with the $\delta$-RIP$_{2r,2s}$ and the $\kappa$-BDP$_{2t}$ properties.
\end{theorem}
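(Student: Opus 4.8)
The plan is to exhibit the asymmetric problem explicitly through the symmetrization map $\mathcal{S}:\mathbb{R}^{n\times n}\to\mathbb{R}^{n\times n}$, $\mathcal{S}(N):=\tfrac12(N+N^T)$. Taking $m=n$, set $f_a(N):=f_s(\mathcal{S}(N))$; since $\mathcal{S}$ is linear and $f_s$ is twice continuously differentiable, so is $f_a$, and the factorized problem $\min_{U,V\in\mathbb{R}^{n\times r}}f_a(UV^T)=\min_{U,V}f_s\!\left(\tfrac12(UV^T+VU^T)\right)$ is exactly \eqref{eqn:relation-1}, with its regularized version obtained by adding $\tfrac{\mu}{4}\|U^TU-V^TV\|_F^2$. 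A useful bookkeeping device is the invertible change of variables $U=P+Q$, $V=P-Q$, under which $\mathcal{S}(UV^T)=PP^T-QQ^T$; so \eqref{eqn:relation-1} is, up to a linear bijection of the search space, the problem $\min_{P,Q\in\mathbb{R}^{n\times r}}f_s(PP^T-QQ^T)$, whose matrix argument is always symmetric of rank at most $2r$.

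I would first establish the equivalence of the two optimization problems. One inequality is immediate: restricting to $V=U$ (equivalently $Q=0$) gives $f_a(UU^T)=f_s(UU^T)$, so the optimal value of the asymmetric problem is at most that of \eqref{eqn:sym}. For the converse, one checks that $f_a$ inherits Assumption \ref{asp:1}: with the convention of Appendix \ref{sec:conditions} that $\nabla f_s$ is symmetric, the chain rule gives $\nabla f_a(N)=\mathcal{S}\!\left(\nabla f_s(\mathcal{S}(N))\right)$, hence $\nabla f_a(M^*_s)=\nabla f_s(M^*_s)$, and the first-order optimality characterization then shows that $M^*_a:=M^*_s$ (which has rank at most $r$) is a first-order critical point of \eqref{eqn:asym-original}. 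Granting the RIP transfer below, the results of \cite{zhu2018global} identify $M^*_s$ as the unique global minimum of both \eqref{eqn:sym-original} and \eqref{eqn:asym-original} and show that the balancing regularizer does not change the matrix-space objective; together this gives the claimed equivalence for the asymmetric problem and its regularized version, for every $\mu>0$.

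The propagation of the constants rests on the identity $[\nabla^2 f_a(N)](K,L)=[\nabla^2 f_s(\mathcal{S}N)](\mathcal{S}K,\mathcal{S}L)$ (using that $\mathcal{S}$ is self-adjoint and idempotent), combined with $\rank(\mathcal{S}X)\le 2\rank(X)$ and $\|\mathcal{S}X\|_F\le\|X\|_F$. For the BDP this is routine: if $\rank(N),\rank(N'),\rank(K),\rank(L)\le 2t$, then $\mathcal{S}N,\mathcal{S}N',\mathcal{S}K,\mathcal{S}L$ all have rank at most $4t$, so the $\kappa$-BDP$_{4t}$ property of $f_s$ together with $\|\mathcal{S}K\|_F\|\mathcal{S}L\|_F\le\|K\|_F\|L\|_F$ yields the $\kappa$-BDP$_{2t}$ property of $f_a$. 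For the RIP, the base point $\mathcal{S}N$ has rank at most $4r$ whenever $\rank(N)\le 2r$ (this is the origin of the $4r\mapsto 2r$ change in the base index), and the upper bound $[\nabla^2 f_a(N)](K,K)\le(1+\delta)\|\mathcal{S}K\|_F^2\le(1+\delta)\|K\|_F^2$ is immediate from $\|\mathcal{S}K\|_F\le\|K\|_F$.

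The step I expect to be the main obstacle is the RIP lower bound, because $\mathcal{S}K$ can be a strict contraction of $K$ --- indeed $\mathcal{S}K=0$ for antisymmetric $K$ --- so the direct estimate gives only $(1-\delta)\|\mathcal{S}K\|_F^2$, which is weaker than $(1-\delta)\|K\|_F^2$. The resolution is to carry out the second-order analysis in the $(P,Q)$ parametrization: the curvature directions entering the conditions of Theorem \ref{thm:asym-condition} become, after the substitution, the differential $\tilde P\Delta_P^T+\Delta_P\tilde P^T-\tilde Q\Delta_Q^T-\Delta_Q\tilde Q^T$ of $(P,Q)\mapsto PP^T-QQ^T$, which is already symmetric; on such directions $\mathcal{S}$ is the identity and the $\delta$-RIP of $f_s$ transfers with no loss. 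Making this reduction rigorous --- in particular, checking that the first- and second-order optimality conditions of \eqref{eqn:asym}--\eqref{eqn:asym-reg} correspond, under the bijection $(U,V)\leftrightarrow(P,Q)$, to exactly the conditions that the RIP of $f_s$ controls --- is the technical heart of the proof; the remaining estimates are the bookkeeping sketched above.
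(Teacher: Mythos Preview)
Your construction $f_a=f_s\circ\mathcal{S}$ is exactly the one the paper intends (the paper omits the proof), and the BDP and upper-RIP transfers are correct. The gap is in the RIP lower bound. You yourself note that for antisymmetric $K$ one has $\mathcal{S}K=0$ and hence $[\nabla^2 f_a(N)](K,K)=0$; but this already shows that $f_a=f_s\circ\mathcal{S}$ \emph{cannot} satisfy the $\delta$-RIP$_{2r,2s}$ lower bound in the paper's RIP definition, since any nonzero antisymmetric matrix of rank $2\le 2s$ is an admissible test direction. Your proposed fix---passing to $(P,Q)$ coordinates and observing that the perturbation directions in Theorem~\ref{thm:asym-condition} become symmetric---does not close this gap: it argues that the second-order \emph{optimality analysis} of the factorized problem only ever interrogates $\nabla^2 f_a$ along symmetric tangents, which is a statement about how the downstream landscape conclusions transfer, not a proof that $f_a$ satisfies RIP for all low-rank $K$ as the theorem literally asserts.

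Put differently, what your $(P,Q)$ argument would actually establish is that every consequence proved in the paper for asymmetric problems under $\delta$-RIP$_{2r,2s}$ also holds for \eqref{eqn:sym} under $\delta$-RIP$_{4r,2s}$ on $f_s$; that is the intended \emph{use} of Theorem~\ref{thm:reduction-2}, but it is not the theorem as stated. If you want the literal statement, $f_a=f_s\circ\mathcal{S}$ will not work and a different $f_a$ is needed---for instance $f_a=f_s$ itself, which trivially inherits $\delta$-RIP$_{2r,2s}$ from $\delta$-RIP$_{4r,2s}$, with equivalence of optima following from Assumption~\ref{asp:1} and the uniqueness result of \cite{zhu2018global}; the paper mentions this alternative immediately after the theorem. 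The paper's own one-line claim that \eqref{eqn:relation-1} ``has the same RIP constants'' appears to be imprecise in precisely this respect.
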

Note that the transformation from a symmetric problem to an asymmetric problem will not increase the constants $\kappa$ and $\delta$ but requires stronger RIP and BDP properties. Hence, a direct analysis on the symmetric case may establish the same property under a weaker condition. In addition to problem \eqref{eqn:relation-1}, we can also directly consider the problem $\min_{U,V} f_a(UV^T)$. However, in certain applications, the objective function is only defined for symmetric matrices and we can only use the formulation \eqref{eqn:relation-1} to construct an asymmetric problem. In more restricted cases when the objective function is only defined for symmetric and positive semi-definite matrices, we can only apply the direct analysis to the symmetric case.

\section{Proofs for Section \ref{sec:svp}}

\subsection{Proof of Theorem \ref{thm:svp}}

\begin{proof}[Proof of Theorem \ref{thm:svp}]

We denote $f(\cdot):=f_s(\cdot)$ and $f(\cdot):=f_a(\cdot)$ for the symmetric and asymmetric case, respectively. Using the mean value theorem and the $\delta$-RIP$_{2r,2r}$ property, there exists a constant $s\in[0,1]$ such that
\begin{align*}
    & \hspace{1.5em} f(M_{t+1}) - f(M_t)\\
    & = \langle \nabla f(M_t), M_{t+1} - M_t \rangle + \frac{1}{2} [\nabla^2 f(M_t + s(M_{t+1} - M_t))](M_{t+1} - M_t,M_{t+1} - M_t) \\
    &\leq \langle \nabla f(M_t), M_{t+1} - M_t \rangle + \frac{1+\delta}{2} \|M_{t+1} - M_t\|_F^2.
\end{align*}
We define
\[ \phi_t(M) := \langle \nabla f(M_t), M - M_t \rangle + \frac{1+\delta}{2} \|M - M_t\|_F^2 = \frac{1+\delta}{2}\|M - \tilde{M}_{t+1}\|_F^2 + constant, \]
where the last constant term is independent of $M$. Since the projection is orthogonal, the projected matrix $M_{t+1}$ achieves the minimal value of $\phi_t(M)$ over all matrices on the manifold $\mathcal{M}$. Therefore, we obtain
\begin{align}\label{eqn:1}
    \nonumber f(M_{t+1}) - f(M_t) &\leq \phi_t(M_{t+1}) \leq \phi_t(M^*) \\
    &= \langle \nabla f(M_t), M^* - M_t \rangle + \frac{1+\delta}{2} \|M^* - M_t\|_F^2.
\end{align}
On the other hand, we can similarly prove that the $\delta$-RIP$_{2r,2r}$ property ensures
\begin{align*}
f(M^*) - f(M_t) &\geq \langle \nabla f(M_t), M^* - M_t \rangle + \frac{1-\delta}{2} \|M^* - M_t\|_F^2, \\
f(M_t) - f(M^*) &\geq \frac{1-\delta}{2} \|M^* - M_t\|_F^2.
\end{align*}
Substituting the above two inequalities into \eqref{eqn:1}, it follows that
\begin{align}\label{eqn:2-1} 
\nonumber f(M_{t+1}) - f(M_t) &\leq  f(M^*) - f(M_t) + \delta \|M^* - M_t\|_F^2\\
&\leq f(M^*) - f(M_t) + \frac{2\delta}{1-\delta} [f(M_t) - f(M^*)].
\end{align}
Therefore, using the condition that $\delta < 1/3$, we have
\[ f(M_{t+1}) - f(M^*) \leq \frac{2\delta}{1-\delta} [f(M_t) - f(M^*)] := \alpha [f(M_t) - f(M^*)], \]
where $\alpha:= (1-\delta)/(2\delta) < 1$. Combining this single-step bound with the induction method proves the linear convergence of Algorithm \ref{alg:svp}.
\end{proof}

\section{Proofs for Section \ref{sec:unique}}

\subsection{Proof of Theorem \ref{thm:nes-suf}}

\begin{proof}[Proof of Theorem \ref{thm:nes-suf}]

We only consider the case when $m$ and $n$ are at least $2r$. In this case, we have $\ell = 2r$. Other cases can be handled similarly. For the notational simplicity, we denote $M^* := M^*_a$ in this proof.

\paragraph{Necessity.} 
We first consider problem \eqref{eqn:asym}. Suppose that $M^*$ and $\tilde{M}$ are the optimum and a spurious second-order critical point of problem \eqref{eqn:asym}, respectively. It has been proved in \cite{ha2020equivalence} that spurious second-order critical point $\tilde{M}$ has rank $r$ and is a fixed point of the SVP algorithm with the step size $(1+\delta)^{-1}$. Therefore, the point $\tilde{M}$ should be a minimizer of the projection step of the SVP algorithm. This implies that
\begin{align*}
    \| \tilde{M} - [\tilde{M} - (1+\delta)^{-1} \nabla f_a(\tilde{M})] \|_F^2 \leq \| M^* - [\tilde{M} - (1+\delta)^{-1} \nabla f_a(\tilde{M})] \|_F^2,
\end{align*}
which can be simplified to
\begin{align}\label{eqn:rank-1-add-2}
    \langle \nabla f_a(\tilde{M}), \tilde{M} - M^* \rangle \leq \frac{1+\delta}{2}\|\tilde{M} - M^*\|_F^2.
\end{align}
Let $\mathcal{U}$ and $\mathcal{V}$ denote the subspaces spanned by the columns and rows of $\tilde{M}$ and $M^*$, respectively. Namely, we have
\[ \mathcal{U} := \{ \tilde{M} v_1 + M^* v_2 ~|~v_1,v_2 \in \mathbb{R}^m \},\quad \mathcal{V} := \{ \tilde{M}^T u_1 + (M^*)^T u_2 ~|~u_1,u_2 \in \mathbb{R}^n \}. \]
Since the ranks of both matrices are bounded by $r$, the dimensions of $\mathcal{U}$ and $\mathcal{V}$ are bounded by $2r$. Therefore, we can find orthogonal matrices $U\in\mathbb{R}^{n\times 2r}$ and $V\in\mathbb{R}^{m\times 2r}$ such that
\[ \mathcal{U} \subset \mathrm{range}(U),\quad \mathcal{V} \subset \mathrm{range}(V) \]
and write $\tilde{M},M^*$ in the form
\begin{align*}
    \tilde{M} = U \begin{bmatrix} \Sigma & 0_{r\times r}\\ 0_{r\times r} & 0_{r\times r}\\  \end{bmatrix} V^T,\quad M^* = URV^T,
\end{align*}
where $\Sigma\in\mathbb{R}^{r\times r}$ is a diagonal matrix and $R\in\mathbb{R}^{2r\times 2r}$ has rank at most $r$. Recalling the first condition in Theorem \ref{thm:asym-condition}, the column space and the row space of $\nabla f_a(\tilde{M})$ are orthogonal to the column space and the row space of $\tilde{M}$, respectively. Then, the $\delta$-RIP$_{2r,2r}$ property gives
\begin{align}\label{eqn:rank-1-add} 
\nonumber\exists \alpha \in [1-\delta,1+\delta] \quad\mathrm{s.t.}~ &-\langle \nabla f_a(\tilde{M}), M^*\rangle = \langle \nabla f_a(\tilde{M}), \tilde{M} - M^*\rangle\\
\nonumber&= \int_0^1 [\nabla^2 f_a( M^* + s(\tilde{M}-M^*) )]( \tilde{M} - M^*,\tilde{M} - M^* )~ds\\
&= \alpha \|\tilde{M} - M^*\|_F^2 > 0. \end{align}
This means that
\[ G := \mathcal{P}_{U} \nabla f_a(\tilde{M}) \mathcal{P}_V \neq 0, \]
where $\mathcal{P}_{U}$ and $\mathcal{P}_{V}$ are the orthogonal projections onto $\mathcal{U}$ and $\mathcal{V}$, respectively. Combining with inequality \eqref{eqn:rank-1-add-2}, we obtain $\alpha \leq (1+\delta)/2$. By the definition of $G$, we have
\[ \langle \nabla f_a(\tilde{M}), M^*\rangle = \langle G, M^*\rangle. \]
Since both the column space and the row space of $G$ are orthogonal to $\tilde{M}$, the matrix $G$ has the form
\begin{align}\label{eqn:rank-1-2} G = U \begin{bmatrix} 0_{r\times r} & 0_{r\times r}\\ 0_{r\times r} & -\Lambda\\  \end{bmatrix} V^T, \end{align}
where $\Lambda \in\mathbb{R}^{r\times r}$. We may assume without loss of generality that $\Lambda_{ii}\geq0$ for all $i$; otherwise, one can flip the sign of some of the last $r$ columns of $U$. By another orthogonal transformation, we may assume without loss of generality that $\Lambda$ is a diagonal matrix. Then, Theorem \ref{thm:fixed-pt} gives
\begin{align}\label{eqn:rank-1-1}
    (1+\delta) \min_{1\leq i\leq r} \Sigma_{ii} = (1+\delta) \sigma_r(\tilde{M}) \geq \|\nabla f_a(\tilde{M})\|_2 \geq \|G\|_2 = \max_{1\leq i\leq (\ell-r)} \Lambda_{ii}.
\end{align}
In addition, condition \eqref{eqn:rank-1-add} is equivalent to
\begin{align}\label{eqn:rank-1-3}
    \langle \Lambda, R_{r+1:2r, r+1:2r} \rangle = \alpha \|\tilde{M}-M^*\|_F^2 = \alpha \left[ \tr(\Sigma^2) - 2\langle \Sigma, R_{1:r,1:r}\rangle + \|R\|_F^2 \right].
\end{align}
By the Taylor expansion, for every $Z\in\mathbb{R}^{n\times m}$, we have
\[ \langle \nabla f_a(\tilde{M}), Z\rangle = \int_0^1 [\nabla^2 f_a(M^* + s(\tilde{M} - M^*))]( \tilde{M} - M^*, Z )~ds = (\tilde{M} - M^*) : \mathcal{H} : Z, \]
where the last expression is the tensor multiplication and $\mathcal{H}$ is the tensor such that
\[ K : \mathcal{H} :L = \int_0^1 [\nabla^2 f_a(M^* + s(\tilde{M} - M^*))]( K,L )~ds, \quad \forall K,L\in\mathbb{R}^{n\times m}. \] 
We define
\[ \tilde{G} := G - \alpha(\tilde{M} - M^*). \]
By the definition of $\alpha$, we know that $\langle \tilde{G},\tilde{M}-M^*\rangle = 0$. Furthermore, using the definition of $\mathcal{H}$, we obtain
\begin{align*}
    &(\tilde{M} - M^*) : \mathcal{H} : (\tilde{M}-M^*) = \alpha \|\tilde{M} - M^*\|_F^2,\\
    &(\tilde{M} - M^*) : \mathcal{H} : \tilde{G} = \tilde{G} : \mathcal{H} : (\tilde{M}-M^*) = \|\tilde{G}\|_F^2.
\end{align*}
Suppose that
\[ \tilde{G} : \mathcal{H} : \tilde{G} = \beta \|\tilde{G}\|_F^2 \]
for some $\beta \in [1-\delta,1+\delta]$. We consider matrices of the form
\[ K(t) := t (\tilde{M}-M^*) + \tilde{G}, \quad \forall t\in\mathbb{R}. \]
Since $K(t)$ is a linear combination of $\tilde{M}-M^*$ and $G$, the column space of $K(t)$ is a subspace of $\mathcal{U}$, and thus $K(t)$ has rank at most $2r$ and the $\delta$-RIP$_{2r,2r}$ property implies
\begin{align}\label{eqn:rank-1-4} (1-\delta) \|K(t)\|_F^2 \leq  K(t) : \mathcal{H} : K(t) \leq (1+\delta) \|K(t)\|_F^2 . \end{align}
Using the facts that
\begin{align*}
    \|K(t)\|_F^2 &= \|\tilde{M}-M^*\|_F^2\cdot t^2 + \|\tilde{G}\|_F^2,\\
    K(t) : \mathcal{H} : K(t) &= \alpha \|\tilde{M}-M^*\|_F^2 \cdot t^2 + 2 \|\tilde{G}\|_F^2 \cdot t + \beta \|\tilde{G}\|_F^2,
\end{align*}
we can write the two inequalities in \eqref{eqn:rank-1-4} as quadratic inequalities 
\begin{align}\label{eqn:rank-1-4-add}
\nonumber&[\alpha - (1-\delta)] \|\tilde{M}-M^*\|_F^2 \cdot t^2 + 2 \|\tilde{G}\|_F^2 \cdot t+ [\beta - (1-\delta)] \|\tilde{G}\|_F^2 \geq0,\\
&[(1+\delta) - \alpha] \|\tilde{M}-M^*\|_F^2 \cdot t^2 - 2 \|\tilde{G}\|_F^2 \cdot t + [(1+\delta) - \beta] \|\tilde{G}\|_F^2 \geq0.
\end{align}
If $\alpha = 1-\delta$, then we must have $\|\tilde{G}\|_F = 0$ and thus $G = \alpha(\tilde{M} - M^*)$. Equivalently, we have $M^* = \tilde{M} - \alpha^{-1} G$. Since the column and row spaces of $G \neq0$ are orthogonal to $\tilde{M}$, the rank of $M^*$ is at least $\rank(\tilde{M}) + 1 = r+1$, which is a contradiction. 
Since $\alpha \leq (1+\delta) / 2$, we have $\alpha < 1+\delta$. Thus, we have proved that
\[ 1-\delta < \alpha < 1+\delta. \]
Checking the condition for quadratic functions to be non-negative, we obtain
\begin{align*}
    \|\tilde{G}\|_F^2 &\leq [\alpha - (1-\delta)][\beta - (1-\delta)] \cdot \|\tilde{M}-M^*\|_F^2,\\
    \|\tilde{G}\|_F^2 &\leq [(1+\delta) - \alpha][(1+\delta) - \beta] \cdot \|\tilde{M}-M^*\|_F^2.
\end{align*}
Since
\[ \alpha - (1-\delta) > 0,\quad (1+\delta) - \alpha > 0, \]
the above two inequalities are equivalent to
\begin{align*}
    \frac{\|\tilde{G}\|_F^2}{\alpha - (1-\delta)} &\leq [\beta - (1-\delta)] \cdot \|\tilde{M}-M^*\|_F^2,\\
    \frac{\|\tilde{G}\|_F^2}{(1+\delta) - \alpha} &\leq [(1+\delta) - \beta] \cdot \|\tilde{M}-M^*\|_F^2.
\end{align*}
Summing up the two inequalities and dividing both sides by $2\delta$ gives rise to 
\begin{align}\label{eqn:nes-suf-201} \frac{\|\tilde{G}\|_F^2}{\delta^2 - (1-\alpha)^2}  \leq \|\tilde{M}-M^*\|_F^2. \end{align}
We note that the above condition is also sufficient for the inequalities in \eqref{eqn:rank-1-4-add} to hold by choosing $\beta = 2-\alpha$. Using the relation $\|G\|_F^2 = \|\tilde{G}\|_F^2 + \alpha^2 \|\tilde{M} - M^*\|_F^2$, one can write 
\begin{align}\label{eqn:rank-1-5} \tr(\Lambda^2) = \|G\|_F^2 \leq ( 2\alpha - 1 + \delta^2 ) \|\tilde{M}-M^*\|_F^2 = \alpha^{-1}( 2\alpha - 1 + \delta^2 )\langle \Lambda, R_{r+1:2r, r+1:2r} \rangle. \end{align}
Now, using the fact that $\rank(M^*) \leq r$, we can write the matrix $R$ as
\[ R = \begin{bmatrix} A \\ C \end{bmatrix}\begin{bmatrix} B \\ D \end{bmatrix}^T = \begin{bmatrix} AB^T & AD^T \\ CB^T & CD^T \end{bmatrix}, \]
where $A,B,C,D\in\mathbb{R}^{r\times r}$. Then, conditions \eqref{eqn:rank-1-3} and \eqref{eqn:rank-1-5} become
\begin{align}
\label{eqn:rank-1-6}
    \langle \Lambda, CD^T \rangle &= \alpha \left[ \tr(\Sigma^2) - 2\langle \Sigma, AB^T \rangle + \|AB^T\|_F^2 + \|AD^T\|_F^2 + \|CB^T\|_F^2 + \|CD^T\|_F^2 \right]
\end{align}
and
\begin{align}
\label{eqn:rank-1-7}
    \tr(\Lambda^2) &\leq \alpha^{-1}( 2\alpha - 1 + \delta^2 ) \cdot \langle \Lambda, CD^T \rangle.
\end{align}
If $\langle \Lambda, CD^T \rangle = 0$, we have
\[ \tr(\Sigma^2) - 2\langle \Sigma, AB^T \rangle + \|AB^T\|_F^2 + \|AD^T\|_F^2 + \|CB^T\|_F^2 + \|CD^T\|_F^2 = 0, \]
which implies that
\[ AB^T = \Sigma,\quad AD^T = CB^T = CD^T = 0. \]
This contradicts the assumption that $\tilde{M}\neq M^*$. Combining this with conditions \eqref{eqn:rank-1-1}, \eqref{eqn:rank-1-6} and \eqref{eqn:rank-1-7}, we arrive at the necessity part. For problem \eqref{eqn:asym-reg}, Lemma 3 in \cite{ha2020equivalence} ensures that $\tilde{M}$ is still a fixed point of the SVP algorithm. Recalling the necessary conditions in Theorem \ref{thm:asym-condition}, we know that the same necessary conditions also hold in this case.

\paragraph{Sufficiency.} Now, we study the sufficiency part. We first consider problem \eqref{eqn:asym}. We choose two orthogonal matrices $U\in\mathbb{R}^{n\times 2r},V\in\mathbb{R}^{m\times 2r}$ and define
\[ \tilde{M} =  U \begin{bmatrix} \Sigma & 0_{r\times r}\\ 0_{r\times r} & 0_{r\times r}\\  \end{bmatrix} V^T,\quad M^* := U\left(\begin{bmatrix} A \\ C \end{bmatrix}\begin{bmatrix} B \\ D \end{bmatrix}^T\right) V^T, \quad G := U \begin{bmatrix} 0_{r\times r} & 0_{r\times r}\\ 0_{r\times r} & -\Lambda\\  \end{bmatrix} V^T. \]
Since $\langle \Lambda, CD^T \rangle \neq0$, we have $\tilde{M}\neq M^*$. Then, we know that $\rank(\tilde{M}) \leq r$ and $\rank(M^*) \leq r$. We define
\[ \tilde{G} := G - \alpha(\tilde{M} - M^*), \]
which satisfies $\langle \tilde{G},\tilde{M} - M^*\rangle = 0$ by the condition in the second line of \eqref{eqn:rank-1-thm}. If $\tilde{G} = 0$, then
\[ \begin{bmatrix} 0_{r\times r} & 0_{r\times r}\\ 0_{r\times r} & -\Lambda\\  \end{bmatrix} = \alpha \cdot \begin{bmatrix} \Sigma & 0_{r\times r}\\ 0_{r\times r} & 0_{r\times r}\\  \end{bmatrix} - \alpha \cdot \begin{bmatrix} A \\ C \end{bmatrix}\begin{bmatrix} B \\ D \end{bmatrix}^T = \alpha \cdot \begin{bmatrix} \Sigma & 0_{r\times r}\\ 0_{r\times r} & 0_{r\times r}\\  \end{bmatrix} - \alpha \cdot \begin{bmatrix} AB^T &0 \\ 0& CD^T \end{bmatrix}, \]
where the second step is because of $CB^T=0$ and $AD^T=0$. The above relation is equivalent to
\[ \Sigma = AB^T,\quad \Lambda = \alpha \cdot CD^T. \]
Since $\Sigma \succ 0$, the matrix $AB^T$ has rank $r$. Noticing that the decomposition of matrix $M^*$ ensures that the rank of $M^*$ is at most $r$, we have $CD^T = 0$, which is a contradiction to the condition that $\langle CD^T,\Lambda\rangle \neq 0$. Therefore, we have $\tilde{G} \neq 0$. We consider the rank-$2$ symmetric tensor
\begin{align*} 
\mathcal{G}_1 :=& \frac{\alpha}{\|\tilde{M}-M^*\|_F^2} \cdot (\tilde{M}-M^*)\otimes (\tilde{M}-M^*) + \frac{2-\alpha}{\|\tilde{G}\|_F^2} \cdot \tilde{G}\otimes \tilde{G} \\
&+ \frac{1}{\|\tilde{M}-M^*\|_F^2}\left[ (\tilde{M}-M^*)\otimes \tilde{G} + \tilde{G} \otimes (\tilde{M}-M^*) \right] .
\end{align*}
For every matrix $K \in \mathbb{R}^{n\times m}$, we have the decomposition
\[ K = t(\tilde{M} - M^*) + s \tilde{G} + \tilde{K},\quad \langle \tilde{M}-M^*, \tilde{K}\rangle = \langle \tilde{G},\tilde{K}\rangle = 0,  \]
where $t,s\in\mathbb{R}$ are two suitable constants. Then, using the definition of $\mathcal{G}_1$, we have
\[ K : \mathcal{G}_1 : K = \alpha \|\tilde{M}-M^*\|_F^2 \cdot t^2 + 2 \|\tilde{G}\|_F^2 \cdot ts + (2-\alpha) \|\tilde{G}\|_F^2 \cdot s^2. \]
By the conditions in the third line of \eqref{eqn:rank-1-thm}, one can write 
\[ \|\tilde{G}\|_F^2 \leq [\alpha - (1-\delta)][(1+\delta) - \alpha] \cdot \|\tilde{M}-M^*\|_F^2, \]
which leads to
\begin{align*}
\nonumber&[\alpha - (1-\delta)] \|\tilde{M}-M^*\|_F^2 \cdot t^2 + 2 \|\tilde{G}\|_F^2 \cdot ts+ [(1+\delta) - \alpha] \|\tilde{G}\|_F^2 \cdot s^2 \geq0,\\
&[(1+\delta) - \alpha] \|\tilde{M}-M^*\|_F^2 \cdot t^2 - 2 \|\tilde{G}\|_F^2 \cdot ts + [\alpha - (1-\delta)] \|\tilde{G}\|_F^2 \cdot s^2 \geq0.
\end{align*}
The above two inequalities are equivalent to
\begin{align}\label{eqn:rank-1-4-suf} (1-\delta) [\|\tilde{M}-M^*\|_F^2 \cdot s^2+\|\tilde{G}\|_F^2 \cdot t^2] \leq  K : \mathcal{G}_1 : K \leq (1+\delta) [\|\tilde{M}-M^*\|_F^2 \cdot s^2+\|\tilde{G}\|_F^2 \cdot t^2] . \end{align}
By restricting to the subspace
\[ \mathcal{S} := \mathrm{span}\{\tilde{M}-M^*,\tilde{G}\} = \{ s(\tilde{M}-M^*) + t\tilde{G}~|~ s,t\in\mathbb{R} \}, \]
the tensor $\mathcal{G}_1$ can be viewed as a $2\times 2$ matrix. Then, inequality \eqref{eqn:rank-1-4-suf} implies that the matrix has two eigenvalues $\lambda_1$ and $\lambda_2$ such that 
\[ 1-\delta \leq \lambda_1, \lambda_2 \leq 1+\delta. \]
Therefore, we can rewrite the tensor $\mathcal{G}_1$ restricted to $\mathcal{S}$ as
%
\begin{align*}
    [\mathcal{G}_1]_\mathcal{S} = \lambda_1 \cdot G_1 \otimes G_1 + \lambda_2 \cdot G_2 \otimes G_2,
\end{align*}
where $G_1,G_2$ are linear combinations of $\tilde{M}-M^*,\tilde{G}$ and have the unit norm. Since the orthogonal complementary $\mathcal{S}^\perp$ is in the null space of $\mathcal{G}_1$, we have
\[ \mathcal{G}_1 = [\mathcal{G}_1]_\mathcal{S} = \lambda_1 \cdot G_1 \otimes G_1 + \lambda_2 \cdot G_2 \otimes G_2. \]
Now, we choose matrices $G_3,\dots,G_{N}$ such that $G_1,\dots,G_N$ form an orthonormal basis of the linear vector space $\mathbb{R}^{n\times m}$, where $N := nm$. We define another symmetric tensor by
\begin{align*}
    \mathcal{H} := \mathcal{G}_1 + \sum_{i=3}^N (1+\delta) \cdot G_i \otimes G_i.
\end{align*}
Then, inequality \eqref{eqn:rank-1-4-suf} implies that the quadratic form $K: \mathcal{H} : K$ satisfies the $\delta$-RIP$_{2r,2r}$ property.

Therefore, we can choose the Hessian to be the constant tensor $\mathcal{H}$ and define the function $f_a(\cdot)$ as
\begin{align*}
    f_a(K) := \frac{1}{2} (K-M^*) : \mathcal{H} : (K-M^*), \quad \forall K\in\mathbb{R}^{n\times m}.
\end{align*}
Combining with the definition of $\mathcal{H}$, we know
\[ \nabla f_a(\tilde{M}) = \mathcal{H} : (\tilde{M}-M^*) = G,\quad \nabla^2 f_a(\tilde{M}) = \mathcal{H}. \]
We choose matrices $\bar{U}\in\mathbb{R}^{n\times r},\bar{V}\in\mathbb{R}^{m\times r}$ such that $\tilde{M} = \bar{U}\bar{V}^T$ and $\bar{U}^T\bar{U}=\bar{V}^T\bar{V}$. By the definitions of $\tilde{M}$ and $G$, we know that $\tilde{M}$ and $G$ have orthogonal column and row spaces, i.e.,
\[ \bar{U}^TG = 0,\quad G\bar{V} = 0. \]
This means that the first-order optimality conditions are satisfied at the point $(\bar{U},\bar{V})$. For the second-order necessary optimality conditions, we consider the direction
\[ \Delta := \begin{bmatrix} \Delta_U \\ \Delta_V \end{bmatrix} \in \mathbb{R}^{(n+m)\times r}. \]
We consider the decomposition
\[ \Delta_U = \mathcal{P}_{\bar{U}}\Delta_U + \mathcal{P}_{\bar{U}}^\perp\Delta_U := \Delta_U^1 + \Delta_U^2,\quad \Delta_V = \mathcal{P}_{\bar{V}}\Delta_V + \mathcal{P}_{\bar{V}}^\perp\Delta_V := \Delta_V^1 + \Delta_V^2, \]
where $\mathcal{P}_{\bar{U}},\mathcal{P}_{\bar{V}}$ are the orthogonal projection onto the column space of $\bar{U},\bar{V}$, respectively. Then, using the conditions in the first line of \eqref{eqn:rank-1-thm}, we have
\begin{align}
\label{eqn:rank-suf-2}
\nonumber\langle \nabla f_a(\tilde{M}), \Delta_U\Delta_V^T\rangle &= \langle G, \Delta_U\Delta_V^T \rangle = \langle G, \Delta_U^2(\Delta_V^2)^T \rangle \geq - \|G^T\Delta_U^2\|_F\|\Delta_V^2\|_F\\
&\geq -(1+\delta)\sigma_r(\tilde{M})\|\Delta_U^2\|_F\|\Delta_V^2\|_F \geq -(1+\delta)\sigma_r(\tilde{M}) \cdot \frac{\|\Delta_U^2\|_F^2 + \|\Delta_V^2\|_F^2}{2}.
\end{align}
We define
\[ \Delta_1 := \bar{U}(\Delta_V^1)^T + \Delta_U^1\bar{V}^T,\quad \Delta_2 := \bar{U}(\Delta_V^2)^T + \Delta_U^2\bar{V}^T. \]
Then, we know that $\langle \Delta_1,\Delta_2\rangle = 0$. Using the assumption that $CB^T=AD^T = 0$, we know that $M^*$ has the form
\begin{align} \label{eqn:nes-suf-101}
M^* = U \begin{bmatrix} AB^T &0 \\ 0& CD^T \end{bmatrix} V^T = \mathcal{P}_{\bar{U}} M^* \mathcal{P}_{\bar{V}} + \mathcal{P}_{\bar{U}}^\perp M^* \mathcal{P}_{\bar{V}}^\perp.
\end{align}
%
Then, the special form \eqref{eqn:nes-suf-101} implies that
\[ \langle M^*,\Delta_2 \rangle = \langle M^* , \bar{U}(\Delta_V^2)^T + \Delta_U^2\bar{V}^T\rangle = \left\langle M^* , \bar{U}\Delta_V^T\mathcal{P}_{\bar{V}}^\perp + \mathcal{P}_{\bar{U}}^\perp\Delta_U\bar{V}^T \right\rangle = 0. \]
%
Using the definitions of $\tilde{M}$ and $G$, it can be concluded that 
\[ \langle \tilde{M} , \Delta_2\rangle = 0,\quad \langle G,\Delta_2 \rangle = \langle G , \bar{U}(\Delta_V^2)^T + \Delta_U^2\bar{V}^T\rangle = 0. \]
Since $G_1,G_2$ are linear combinations of $\tilde{M}-M^*$ and $G$, the last three relations lead to
\[ \langle G_1 , \Delta_2\rangle = \langle G_2, \Delta_2\rangle = 0. \]
Therefore, there exist constants $a_3,\dots,a_N$ such that
\[ \Delta_2 = \sum_{i=3}^N a_i G_i. \]
Suppose that the constants $b_1,\dots,b_N$ satisfy
\[ \Delta_1 = \sum_{i=1}^N b_i G_i. \]
Then, the fact $\langle\Delta_1,\Delta_2\rangle = 0$ and the orthogonality of $G_1,\dots,G_N$ imply that
\[ \sum_{i=3}^N a_ib_i = 0. \]
We can calculate that
\begin{align*}
    \nonumber &[\nabla^2 f_a(\tilde{M})]( \bar{U}\Delta_V^T + \Delta_U \bar{V}^T,\bar{U}\Delta_V^T + \Delta_U \bar{V}^T ) = (\Delta_1 + \Delta_2) : \mathcal{H} : (\Delta_1 + \Delta_2)\\
    =& \lambda_1 \cdot b_1^2 + \lambda_2 \cdot b_2^2 + (1+\delta)  \sum_{i=3}^N (a_i + b_i)^2 \geq (1+\delta)  \sum_{i=3}^N (a_i + b_i)^2\\
    =& (1+\delta)  \sum_{i=3}^N \left(a_i^2 + b_i^2\right) \geq (1+\delta)  \sum_{i=3}^N a_i^2 = (1+\delta) \|\bar{U}(\Delta_V^2)^T + \Delta_U^2\bar{V}^T\|_F^2,
\end{align*}
where the third last step is due to $\sum_{i=3}^N a_ib_i = 0$.
%
%
Noticing that $\langle \bar{U}(\Delta_V^2)^T, \Delta_U^2\bar{V}^T\rangle = 0 $, the above inequality gives that
\begin{align*}
    &[\nabla^2 f_a(\tilde{M})]( \bar{U}\Delta_V^T + \Delta_U \bar{V}^T,\bar{U}\Delta_V^T + \Delta_U \bar{V}^T ) \geq (1+\delta) \|\bar{U}(\Delta_V^2)^T\|_F^2 + (1+\delta) \|\Delta_U^2\bar{V}^T\|_F^2\\
    \geq & (1+\delta)\sigma_r(\bar{U})^2\|\Delta_V^2\|_F^2 + (1+\delta)\sigma_r(\bar{V})^2\|\Delta_U^2\|_F^2 = (1+\delta)\sigma_r(\tilde{M}) ( \|\Delta_V^2\|_F^2 + \|\Delta_U^2\|_F^2 ),
\end{align*}
%
where the last equality is because of $\sigma_r(\bar{U})^2 = \sigma_r(\bar{V})^2 = \sigma(\tilde{M})$ when $\bar{U}^T\bar{U}=\bar{V}^T\bar{V}$. Combining with inequality \eqref{eqn:rank-suf-2}, one can write 
\begin{align*}
    &[\nabla^2 h_a(U,V)](\Delta,\Delta) = 2\langle \nabla f_a(\tilde{M}), \Delta_U\Delta_V^T\rangle + [\nabla^2 f_a(\tilde{M})]( \bar{U}\Delta_V^T + \Delta_U \bar{V}^T,\bar{U}\Delta_V^T + \Delta_U \bar{V}^T )\\
    \geq& -(1+\delta)\sigma_r(\tilde{M})( \|\Delta_V^2\|_F^2 + \|\Delta_U^2\|_F^2 ) + (1+\delta)\sigma_r(\tilde{M})( \|\Delta_V^2\|_F^2 + \|\Delta_U^2\|_F^2 ) = 0.
\end{align*}
This shows that $(\bar{U},\bar{V})$ satisfies the second-order necessary optimality conditions, and therefore it is a spurious second-order critical point. 

Now, we consider problem \eqref{eqn:asym-reg}. Since the point $(\bar{U},\bar{V})$ satisfies $\bar{U}^T\bar{U}=\bar{V}^T\bar{V}$, it is also a local minimum of the regularization term. Hence, the point $(\bar{U},\bar{V})$ is also a spurious second-order critical point of the regularized problem \eqref{eqn:asym-reg}.
\end{proof}

\subsection{Proof of Corollary \ref{cor:rank-1}}

\begin{proof}[Proof of Corollary \ref{cor:rank-1}]
We assume that problem \eqref{eqn:asym} has a spurious second-order critical point. By the necessity part of Theorem \eqref{eqn:rank-1-thm}, there exist $\alpha \in (1-\delta,1+\delta)$ and real numbers $\sigma,\lambda, a,b,c,d$ such that
\begin{align}\label{eqn:rank-2-1}
    \nonumber(1+\delta)\sigma &\geq \lambda > 0,\quad \alpha^{-1}(2\alpha - 1 + \delta^2) cd\cdot \lambda \geq \lambda^2 > 0,\\
    cd \cdot \lambda &= \alpha[ \sigma^2 - 2 ab\cdot \sigma + (ab)^2 + (ad)^2 + (cb)^2 + (cd)^2 ].
\end{align}
We first relax the second line to
\begin{align}\label{eqn:rank-2-2}
    cd \cdot \lambda &\geq \alpha[ \sigma^2 - 2 |ab|\cdot \sigma + (ab)^2 + 2|ab| \cdot |cd| + (cd)^2 ].
\end{align}
Then, we denote $x:=|ab|$ and consider the quadratic programming problem
\[ \min_{x\geq0}~ x^2 + 2( |cd| - \sigma ) \cdot x, \]
whose optimal value is
\[ - ( \sigma - |cd| )_+^2, \]
where $(t)_+ := \max\{t,0\}$. Substituting into inequality \eqref{eqn:rank-2-2}, we obtain
\begin{align}\label{eqn:rank-2-3}
    cd \cdot \lambda &\geq \alpha[ \sigma^2 - ( \sigma - |cd| )_+^2 + (cd)^2 ].
\end{align}
Then, we consider two different cases.

\paragraph{Case I.} We first consider the case when $\sigma \geq |cd|$. In this case, the inequality \eqref{eqn:rank-2-3} becomes
\begin{align*}
    cd \cdot \lambda &\geq 2\alpha \cdot \sigma |cd| = 2\alpha \cdot \sigma cd,
\end{align*}
where the last equality is due to $cd > 0$. Therefore,
\[ \lambda \geq 2\alpha \cdot \sigma. \]
The second inequality in \eqref{eqn:rank-2-1} implies $\lambda \leq \alpha^{-1}(2\alpha - 1 + \delta^2) \cdot cd$. Combining with the above inequality and the assumption of this case, it follows that
\begin{align*}
     \alpha^{-1}(2\alpha - 1 + \delta^2) \cdot \sigma \geq \alpha^{-1}(2\alpha - 1 + \delta^2) \cdot cd \geq 2\alpha \cdot \sigma,
\end{align*}
which is further equivalent to 
\[ \alpha^{-1}(2\alpha - 1 + \delta^2) \geq 2\alpha \iff \delta^2 \geq 2\alpha^2 - 2\alpha + 1. \]
Since $2\alpha^2 - 2\alpha + 1 \geq 1/2$, we arrive at $\delta^2 \geq 1/2$, which is a contradiction to $\delta < 1/2$.

\paragraph{Case II.} We then consider the case when $\sigma \leq |cd|$. In this case, the inequality \eqref{eqn:rank-2-3} becomes
\begin{align*}
    cd \cdot \lambda &\geq \alpha[ \sigma^2 + (cd)^2 ].
\end{align*}
Combining with the second inequality in \eqref{eqn:rank-2-1}, we obtain $\lambda \leq \alpha^{-1}(2\alpha - 1 + \delta^2) \cdot (cd)$. Therefore,
\begin{align*}
    \alpha^{-1}(2\alpha - 1 + \delta^2) \cdot (cd)^2  \geq cd \cdot \lambda\geq \alpha[ \sigma^2 + (cd)^2 ].
\end{align*}
Moreover, the first inequality in \eqref{eqn:rank-2-1} gives
\begin{align*}
    (1+\delta) \sigma \cdot cd  \geq cd \cdot \lambda\geq \alpha[ \sigma^2 + (cd)^2 ].
\end{align*}
By denoting $y := cd$, the above two inequalities become
\begin{align}\label{eqn:rank-2-6}
    \nonumber\alpha^{-1}(2\alpha - 1 + \delta^2) \cdot y^2  &\geq \alpha[ \sigma^2 + y^2 ],\\
    (1+\delta) \sigma \cdot y  &\geq \alpha[ \sigma^2 + y^2 ].
\end{align}
By denoting $z := y / \sigma$, the first inequality in \eqref{eqn:rank-2-6} implies
\begin{align}\label{eqn:rank-2-7}
    z^2 \geq \frac{\alpha^2}{\delta^2 - (1-\alpha)^2}.
\end{align}
Since $\delta < 1/2$, one can write 
\[ (1-\alpha)^2 + \alpha^2 \geq \frac12 > \frac14 > \delta^2, \]
which is equivalent to $\alpha^2 \geq \delta^2 - (1-\alpha)^2$. Therefore, inequality \eqref{eqn:rank-2-7} implies that $z^2 \geq 1$ and
\begin{align}
\label{eqn:rank-2-8}
    z^2 + \frac{1}{z^{2}} \geq \frac{\alpha^2}{\delta^2 - (1-\alpha)^2} + \frac{\delta^2 - (1-\alpha)^2}{\alpha^2}.
\end{align}
On the other hand, the second inequality in \eqref{eqn:rank-2-6} implies
\begin{align*}
    z + \frac{1}{z} \leq \frac{1+\delta}{\alpha} \quad \text{and thus}\quad z^2 + \frac{1}{z^2} + 2 \leq \frac{(1+\delta)^2}{\alpha^2}.
\end{align*}
Combining with inequality \eqref{eqn:rank-2-8}, it follows that
\begin{align}\label{eqn:rank-r-add}
    \frac{\alpha^2}{\delta^2 - (1-\alpha)^2} + \frac{\delta^2 - (1-\alpha)^2}{\alpha^2} + 2 \leq \frac{(1+\delta)^2}{\alpha^2}.
\end{align}
By some calculation, the above inequality is equivalent to
\[ (\delta^2+2\delta+5) \cdot \alpha^2 + (2\delta^2 - 4\delta - 6) \cdot \alpha + 2(1+\delta)(1-\delta^2) \leq 0. \]
Checking the discriminant of the above quadratic function, we obtain
\[ (2\delta^2 - 4\delta - 6)^2 - 8(\delta^2+2\delta+5)(1+\delta)(1-\delta^2) \geq0, \]
which is equivalent to
\[ 4(2\delta - 1)(\delta+1)^4 \geq 0. \]
However, the above claim contradicts the assumption that $\delta < 1/2$.

In summary, the contradictions in the two cases imply that the condition \eqref{eqn:rank-2-1} cannot hold, and therefore there does not exist spurious second-order critical points.
\end{proof}

\subsection{Counterexample for the Rank-one Case}

\begin{example}\label{exm:1}
Let $e_i\in\mathbb{R}^n$ be the $i$-th standard basis of $\mathbb{R}^n$. We define the tensor
\begin{align*}
    \mathcal{H} := &\sum_{i,j=1}^n (e_i e_j^T) \otimes (e_ie_j^T) + \frac12(e_1 e_1^T) \otimes (e_2 e_2^T) + \frac12(e_2 e_2^T) \otimes (e_1 e_1^T)\\
    &+ \frac14 \left[(e_1 e_2^T) \otimes (e_1 e_2^T) + (e_2 e_1^T) \otimes (e_2 e_1^T)\right] + \frac14 (e_1 e_2^T) \otimes (e_2 e_1^T)+ \frac14 (e_2 e_1^T) \otimes (e_1 e_2^T)
\end{align*}
and the objective function
\[ f_a(M) := (M - e_1e_1^T) : \mathcal{H} : (M - e_1e_1^T) \quad \forall M \in \mathbb{R}^{n\times n}. \]
The global minimizer of $f_a(\cdot)$ is the rank-$1$ matrix $M^*:=e_1e_1^T$. It has been proved in \cite{zhang2019sharp} that the function $f_a(\cdot)$ satisfies the $\delta$-RIP$_{2,2}$ property with $\delta = 1/2$. Moreover, we define
\[ U:= \frac{1}{\sqrt{2}} e_2,\quad \tilde{M} := UU^T \neq M^*. \]
It has been proved in \cite{zhang2019sharp} that the first-order optimality condition is satisfied. To verify the second-order necessary condition, we can calculate that
\begin{align*}
    [\nabla^2 h_a(U,U)](\Delta,\Delta) &= 2\langle \nabla f_a(\tilde{M}), \Delta_U \Delta_V^T\rangle + (U\Delta_V^T + \Delta_U U^T) : \mathcal{H} : (U\Delta_V^T + \Delta_U U^T)\\
    &= -\frac32 (\Delta_U)_1(\Delta_V)_1 + \frac58 \left[ (\Delta_U)_1^2 + (\Delta_V)_1^2 \right] + \frac14(\Delta_U)_1(\Delta_V)_1\\
    &\quad+ \frac12\left[ (\Delta_U)_2 + (\Delta_V)_2 \right]^2  + \frac12 \sum_{i=3}^n \left[ (\Delta_U)_i^2 +(\Delta_V)_i^2 \right]\\
    &= \frac58\left[ (\Delta_U)_1 - (\Delta_V)_1 \right]^2 + \frac12\left[ (\Delta_U)_2 + (\Delta_V)_2 \right]^2  + \frac12 \sum_{i=3}^n \left[ (\Delta_U)_i^2 +(\Delta_V)_i^2 \right],
\end{align*}
which is non-negative for every $\Delta\in\mathbb{R}^n$. Hence, we conclude that the point $\tilde{M}$ is a spurious second-order critical point of problem \eqref{eqn:asym}. Moreover, since we choose $V = U$, the point $\tilde{M}$ is a global minimizer of the regularizer $\|U^TU-V^TV\|_F^2$ and thus $\tilde{M}$ is also a spurious second-order critical point of problem \eqref{eqn:asym-reg}.
\end{example}

\subsection{Proof of Corollary \ref{cor:rank-r}}

\begin{proof}[Proof of Corollary \ref{cor:rank-r}]
We first consider the case when $\delta \leq 1/3$. We assume that there exists a spurious second-order critical point $\tilde{M}$. Then, by Theorem \ref{thm:nes-suf}, we know that there exists a constant $\alpha \in(1-\delta, (1+\delta)/2]$. This means that
\[ 1 - \delta < \frac{1+\delta}{2}, \]
which contradicts the assumption that $\delta \leq 1/3$.

Then, we consider the case when $\delta < 1/2$. With no loss of generality, assume that $\tilde{M}\neq M^*$ and $M^* \neq 0$; otherwise, the inequality in this theorem is trivially true. Define
\[ m_{11} := \|\Sigma\|_F^2,\quad m_{12} := \langle \Sigma, AB^T\rangle ,\quad m_{22} :=\|AB^T\|_F^2 + \|AD^T\|_F^2 + \|CB^T\|_F^2 + \|CD^T\|_F^2. \]
By our construction in Theorem \ref{thm:nes-suf}, we know that
\[ m_{11} = \|\tilde{M}\|_F^2,\quad m_{12} = \langle \tilde{M},M^* \rangle,\quad m_{22} = \|M^*\|_F^2. \]
Therefore, we only need to prove $m_{12} \geq C(\delta)\cdot \sqrt{m_{11}m_{22}}$ for some constant $C(\delta) > 0$. By the analysis in \cite{ha2020equivalence}, we know that the second-order critical point $\tilde{M}$ must have rank $r$ and thus $m_{11}\neq 0$. The remainder of the proof is split into two steps.

\paragraph{Step I.} First, we prove that
\begin{align}\label{eqn:rank-r-1}
    \frac{(m_{11} + m_{22} - 2m_{12})^2}{m_{11}m_{22} - m_{12}^2} \leq \frac{(1+\delta)^2}{\alpha^2},\quad \frac{(m_{11} - m_{12})^2}{m_{11}m_{22} - m_{12}^2} \leq  \frac{\delta^2 - (1-\alpha)^2}{\alpha^2}.
\end{align}
We first rule out the case when $m_{11}m_{22} - m_{12}^2 = 0$. In this case, the equality condition of the Cauchy inequality shows that there exists a constant $t$ such that
\[ \tilde{M} = tM^*. \]
Since $\tilde{M}\neq 0$, the constant $t$ is not $0$. Using the mean value theorem, there exists a constant $c\in[0,1]$ such that
\begin{align*} 
\langle \nabla f_a(\tilde{M}), Z\rangle &= \nabla^2 f[M^* + c(\tilde{M} - M^*)](\tilde{M} - M^*,Z)\\
&= \nabla^2 f[M^* + c(\tilde{M} - M^*)][ (t - 1) M^*, Z], \quad \forall Z\in\mathbb{R}^{n\times m}.
\end{align*}
The $\delta$-RIP$_{2r,2r}$ property gives
\[ \langle \nabla f_a(\tilde{M}), \tilde{M} \rangle = \nabla^2 f[M^* + c(\tilde{M} - M^*)][ (t-1)M^*, tM^* ] \geq t(t-1)(1-\delta)\|M^*\|_F^2. \]
If $t = 1$, we conclude that $\tilde{M} = M^*$, which contradicts the assumption that $\tilde{M}\neq M^*$. Therefore, it holds that
\[ \langle \tilde{M}, \nabla f_a(\tilde{M}) \rangle \neq 0. \]
This contradicts the first-order optimality condition, which states that $\langle \tilde{M},\nabla f_a(\tilde{M}) \rangle = 0$. Hence, we have proved that inequality \eqref{eqn:rank-r-1} is well defined. We consider the decomposition
\begin{align*}
    \begin{bmatrix}0&0\\0&\Lambda\end{bmatrix} = c_1 \begin{bmatrix}\Sigma&0\\0&0\end{bmatrix} + c_2 \begin{bmatrix} A \\ C\end{bmatrix}\begin{bmatrix} B\\D\end{bmatrix}^T + K,\quad \left\langle K, \begin{bmatrix}\Sigma&0\\0&0\end{bmatrix} \right\rangle = \left\langle K, \begin{bmatrix} A \\ C\end{bmatrix}\begin{bmatrix} B\\D\end{bmatrix}^T \right\rangle = 0.
\end{align*}
%
Using the conditions in Theorem \ref{thm:nes-suf}, it follows that
\begin{align*}
    \left\langle \begin{bmatrix}0&0\\0&\Lambda\end{bmatrix}, \begin{bmatrix}\Sigma&0\\0&0\end{bmatrix} \right\rangle = 0,\quad \left\langle \begin{bmatrix}0&0\\0&\Lambda\end{bmatrix}, \begin{bmatrix} A \\ C\end{bmatrix}\begin{bmatrix} B\\D\end{bmatrix}^T \right\rangle = \alpha( m_{11} - 2m_{12} + m_{22} ).
\end{align*}
The pair of coefficients $(c_1,c_2)$ can be uniquely solved as
\begin{align*}
    c_1 = -\alpha \cdot \frac{m_{11}+m_{22}-2m_{12}}{m_{11}m_{22}-m_{12}^2} \cdot m_{12},\quad c_2 =  \alpha \cdot \frac{m_{11}+m_{22}-2m_{12}}{m_{11}m_{22}-m_{12}^2} \cdot m_{11}.
\end{align*}
Using the orthogonality of the decomposition, we have
\begin{align}\label{eqn:rank-r-4}
    \nonumber\|\Lambda\|_F^2 &\geq \left\| c_1 \begin{bmatrix}\Sigma&0\\0&0\end{bmatrix} + c_2 \begin{bmatrix} A \\ C\end{bmatrix}\begin{bmatrix} B\\D\end{bmatrix}^T \right\|_F^2 =  c_1^2 m_{11} + 2c_1c_2 m_{12} + c_2^2 m_{22}\\
    &= \alpha^2 \cdot \frac{m_{11}(m_{11} + m_{22} - 2m_{12})^2}{m_{11}m_{22} - m_{12}^2}.
\end{align}
Using the last two lines of condition \eqref{eqn:rank-1-thm}, one can write 
\begin{align*}
    &\alpha^2 \cdot \frac{m_{11}(m_{11} + m_{22} - 2m_{12})^2}{m_{11}m_{22} - m_{12}^2} \leq \|\Lambda\|_F^2\\
    &\leq (2\alpha - 1 + \delta^2) \left[ \tr(\Sigma^2) - 2\langle \Sigma, AB^T \rangle + \|AB^T\|_F^2 + \|AD^T\|_F^2 + \|CB^T\|_F^2 + \|CD^T\|_F^2 \right]\\
    &= (2\alpha - 1 + \delta^2)(m_{11} - 2m_{12} + m_{22}).
\end{align*}
Simplifying the above inequality, we arrive at the second inequality in \eqref{eqn:rank-r-1}. Now, the first inequality in condition \eqref{eqn:rank-1-thm} implies that
\[ \|\Lambda\|_F^2 \leq (1+\delta)^2 \|\Sigma\|_F^2 = (1+\delta)^2 m_{11}. \]
Substituting inequality \eqref{eqn:rank-r-4} into the left-hand side, it follows that
\begin{align*}
    \alpha^2 \cdot \frac{m_{11}(m_{11} + m_{22} - 2m_{12})^2}{m_{11}m_{22} - m_{12}^2} \leq (1+\delta)^2 m_{11},
\end{align*}
which is equivalent to the first inequality in \eqref{eqn:rank-r-1}.

\paragraph{Step II.} Next, we prove the existence of $C(\delta)$. We denote
\[ \kappa := \frac{m_{12}}{\sqrt{m_{11}m_{22}}} \in (-1,1). \]
%
and
\[ C_1 := \frac{\delta^2 - (1-\alpha)^2}{\alpha^2},\quad C_2 := \frac{(1+\delta)^2}{\alpha^2},\quad t := \sqrt{\frac{m_{11}}{m_{22}}}. \]
Since $\tilde{M}\neq 0$, we have $t> 0$. The inequalities in \eqref{eqn:rank-r-1} can be written as
\begin{align}\label{eqn:rank-r-2}
    ( t - \kappa )^2 \leq (1 - \kappa ^2)  C_1,\quad ( t + 1/t - 2\kappa )^2 \leq (1 - \kappa ^2) C_2.
\end{align}
Using the assumption that $\delta < 1/2$, we can write 
\[ \delta^2 < \frac14 < (1-\alpha)^2 + \frac12\alpha^2, \]
which leads to
\begin{align*}
    C_1 = \frac{\delta^2 - (1-\alpha)^2}{\alpha^2} < \frac12.
\end{align*}
If $\kappa + \sqrt{(1-\kappa^2)C_1} \geq 1$, then
\begin{align}\label{eqn:rank-r-3}
    |\kappa| \geq \frac{1-C_1}{1+C_1} \geq \frac13 > 0.
\end{align}
If $\kappa < 0$, then it holds that
\[ \kappa + \sqrt{(1-\kappa^2)C_1} \leq -\frac13 + \sqrt{\frac12} < 1, \]
which contradicts the assumption. Therefore, we have $\kappa \geq0$ and inequality \eqref{eqn:rank-r-3} gives $\kappa \geq 1/3$.

Now, we assume that $\kappa + \sqrt{(1-\kappa^2)C_1} \leq 1$. Then, the first inequality in \eqref{eqn:rank-r-2} gives
\[ 0 < t \leq \kappa + \sqrt{(1-\kappa^2)C_1} \leq 1, \]
which further leads to
\[ t + \frac1t - 2\kappa \geq -\kappa + \sqrt{(1-\kappa^2)C_1} + \frac{1}{\kappa + \sqrt{(1-\kappa^2)C_1}}. \]
Combining with the second inequality in \eqref{eqn:rank-r-2}, we obtain
\begin{align*}
    -\kappa + \sqrt{(1-\kappa^2)C_1} + \frac{1}{\kappa + \sqrt{(1-\kappa^2)C_1}} \leq \sqrt{(1 - \kappa ^2) C_2}.
\end{align*}
The above inequality can be simplified to
\begin{align*}
    \sqrt{1-\kappa^2}( 1 + C_1 - \sqrt{C_1C_2} ) \leq \kappa \sqrt{C_2}.
\end{align*}
We notice that the inequality $1 + C_1 - \sqrt{C_1C_2} \leq 0$ is equivalent to inequality \eqref{eqn:rank-r-add}, which cannot hold when $\delta < 1/2$. Therefore, we have $1 + C_1 - \sqrt{C_1C_2} > 0$ and $\kappa > 0$. Then, the above inequality is equivalent to
\[ (1 - \kappa^2)( 1 + C_1 - \sqrt{C_1C_2} )^2 \leq \kappa^2 \cdot C_2. \]
Therefore, we have
\begin{align*}
    \kappa^2 \geq \frac{( 1 + C_1 - \sqrt{C_1C_2} )^2}{ ( 1 + C_1 - \sqrt{C_1C_2} )^2 + C_2 } = 1 - \frac{1}{1 + \eta^2},
\end{align*}
where we define
\[ \eta := \frac{1 + C_1 - \sqrt{C_1C_2}}{\sqrt{C_2}}. \]
To prove the existence of $C(\delta)$ such that $\kappa \geq C(\delta) > 0$, we only need to show that $\eta$ is lower bounded by a positive constant. With $\delta$ fixed, $\eta$ can be viewed as a continuous function of $\alpha$. Since $\eta = (1-\delta)/(1+\delta) > 0$ when $\alpha = 1 - \delta$, the function/parameter $\eta$ is defined for all $\alpha$ in the compact set $[1-\delta,(1+\delta)/2]$. Combining with the fact that $1 + C_1 - \sqrt{C_1C_2} > 0$, the function $\eta$ is positive on a compact set, and thus there exists a positive lower bound $\bar{C}(\delta) > 0$.

In summary, we can define the function
\[ C(\delta) := \min\left\{ \frac13, \bar{C}(\delta) \right\} >0 \]
such that $\kappa \geq C(\delta)$ for every spurious second-order critical point $\tilde{M}$. 
\end{proof}

\subsection{Counterexample for the General Rank Case with Linear Measurements}

\begin{example}

Using the previous rank-$1$ example, we design a counterexample with linear measurement for the rank-$r$ case. Let $n\geq 2r$ be an integer and $e_i\in\mathbb{R}^n$ be the $i$-th standard basis of $\mathbb{R}^n$. We define the tensor
\begin{align*}
    \mathcal{H} := &\frac32\sum_{i,j=1}^n (e_i e_j^T) \otimes (e_ie_j^T) + \sum_{i=1}^r \Big\{-\frac12\left[(e_{2i-1} e_{2i-1}^T) \otimes (e_{2i-1} e_{2i-1}^T) + (e_{2i} e_{2i}^T) \otimes (e_{2i} e_{2i}^T)\right]\\
    &+\frac12\left[(e_{2i-1} e_{2i-1}^T) \otimes (e_{2i} e_{2i}^T) + (e_{2i} e_{2i}^T) \otimes (e_{2i-1} e_{2i-1}^T)\right]\\
    &- \frac14 \left[(e_{2i-1} e_{2i}^T) \otimes (e_{2i-1} e_{2i}^T) + (e_{2i} e_{2i-1}^T) \otimes (e_{2i} e_{2i-1}^T)\right]\\
    &+ \frac14 \left[(e_{2i-1} e_{2i}^T) \otimes (e_{2i} e_{2i-1}^T)+ (e_{2i} e_{2i-1}^T) \otimes (e_{2i-1} e_{2i}^T)\right]\Big\}
\end{align*}
and the rank-$r$ global minimum
\[ U^* := \begin{bmatrix} e_1 & e_3 &\cdots & e_{2r-1} \end{bmatrix},\quad  M^* := U^*(U^*)^T = \sum_{i=1}^r e_{2i-1} e_{2i-1}^T. \]
The objective function is defined as
\[ f_a(M) := (M - M^*) : \mathcal{H} : (M - M^*) \quad \forall M \in \mathbb{R}^{n\times n}. \]
We can similarly prove that the function $f_a(\cdot)$ satisfies the $\delta$-RIP$_{2r,2r}$ property with $\delta = 1/2$. Moreover, we define
\[ \tilde{U} := \frac{1}{\sqrt{2}}\begin{bmatrix} e_2 & e_4 &\cdots & e_{2r} \end{bmatrix},\quad \tilde{M}:= \tilde{U}\tilde{U}^T = \frac{1}{2} \sum_{i=1}^r e_{2i}e_{2i}^T  \neq M^*. \]
The gradient of $f_a(\cdot)$ at point $\tilde{M}$ is
\[ \nabla f_a(\tilde{M}) = - \frac{3}{4} \sum_{i=1}^r e_{2i-1} e_{2i-1}^T \in \mathbb{R}^{2r\times 2r}. \]
Since the column and row spaces of the gradient are orthogonal to those of $\tilde{M}$, the first-order optimality condition is satisfied. To verify the second-order necessary condition, we can similarly calculate that
\begin{align*}
    [\nabla^2 h_a(\tilde{U},\tilde{U})]&(\Delta,\Delta)\\
    =& 2 \langle \nabla f_a(\tilde{M}), \Delta_U \Delta_V^T\rangle + (\tilde{U}\Delta_V^T + \Delta_V \tilde{U}^T) : \mathcal{H} : (\tilde{U}\Delta_V^T + \Delta_U \tilde{U}^T)\\
    =& -\frac32 \sum_{i=1}^r \left[ \sum_{j=1}^r (\Delta_U)_{2i-1,j} \right]\left[ \sum_{j=1}^r (\Delta_V)_{2i-1,j} \right] + \sum_{i=1}^r \Big\{\frac58 \left[ (\Delta_U)_{2i-1,i}^2 + (\Delta_V)_{2i-1,i}^2 \right]\\
    &+ \frac14(\Delta_U)_{2i-1,i}(\Delta_V)_{2i-1,i}+ \frac12\left[ (\Delta_U)_{2i,i} + (\Delta_V)_{2i,i} \right]^2 \Big\}\\
    & +  \sum_{1\leq i,j\leq n, i\neq j} \frac34\left[ (\Delta_U)_{2j,i} +(\Delta_V)_{2i,j} \right]^2+  \sum_{1\leq i,j\leq n, i\neq j} \frac34\left[ (\Delta_U)_{2j-1,i}^2 +(\Delta_V)_{2j-1,i}^2 \right]\\
    &=\sum_{i=1}^r \Big\{\frac58 \left[ (\Delta_U)_{2i-1,i} - (\Delta_V)_{2i-1,i} \right]^2 + \frac12\left[ (\Delta_U)_{2i,i} + (\Delta_V)_{2i,i} \right]^2 \Big\}\\
    & +  \sum_{1\leq i,j\leq n, i\neq j} \frac34\left[ (\Delta_U)_{2j,i} +(\Delta_V)_{2i,j} \right]^2 +  \sum_{1\leq i,j\leq n, i\neq j} \frac34\left[ (\Delta_U)_{2j-1,i} - (\Delta_V)_{2j-1,i} \right]^2,
\end{align*}
which is non-negative for every $\Delta\in\mathbb{R}^{n\times r}$. Hence, the point $\tilde{M}$ is a spurious second-order critical point of problem \eqref{eqn:asym}. Moreover, since we choose $\tilde{V} = \tilde{U}$, the point $\tilde{M}$ is a global minimizer of the regularizer $\|\tilde{U}^T\tilde{U}-\tilde{V}^T\tilde{U}\|_F^2$ and thus $\tilde{M}$ is also a spurious second-order critical point of problem \eqref{eqn:asym-reg}.
\end{example}

\section{Proofs for Section \ref{sec:strict-saddle} }

\subsection{Proof of Theorem \ref{thm:strict-saddle-1}}

In this subsection, we use the following notations:
\[ M := UV^T,~ M^* := U^*(V^*)^T,\quad W := \begin{bmatrix} U\\V \end{bmatrix},~ W^* := \begin{bmatrix} U^*\\V^* \end{bmatrix},\quad \hat{W} := \begin{bmatrix} U\\-V \end{bmatrix},~ \hat{W}^* := \begin{bmatrix} U^*\\-V^* \end{bmatrix}, \]
where $M^*:=M_a^*$ is the global optimum. We always assume that $U^*$ and $V^*$ satisfy $(U^*)^TU^* = (V^*)^TV^*$. When there is no ambiguity about $W$, we use $W^*$ to denote the minimizer of $\min_{X \in \mathcal{X}^*} \|W - X\|_F$, where $\mathcal{X}^*$ is the set of global minima of problem \eqref{eqn:asym-reg}. We note that the set $\mathcal{X}^*$ is the trajectory of a global minimum $(U^*,V^*)$ under the orthogonal group:
\[ \mathcal{X}^* = \{ (U^*R,V^*R)~|~ R\in\mathbb{R}^{r\times r}, R^TR = RR^T = I_r \}. \]
%
Therefore, the set $\mathcal{X}^*$ is a compact set and its minimum can be attained.
With this choice, it holds that
\[ \mathrm{dist}(W,\mathcal{X}^*) = \|W - W^*\|_F. \]
We first summarize some technical results in the following lemma.
\begin{lemma}[\cite{tu2016low,zhu2018global}]
\label{lem:tech}
The following statements hold for every $U \in \mathbb{R}^{n\times r}$, $V \in \mathbb{R}^{m\times r}$ and $W \in \mathbb{R}^{(n+m)\times r}$:
\begin{itemize}
    \item $4\|M-M^*\|_F^2 \geq \|WW^T - W^*(W^*)^T\|_F^2 - \|U^TU - V^TV\|_F^2$.
    \item $\|W^*(W^*)^T\|_F^2 = 4\|M^*\|_F^2$.
    \item If $\rank(W^*)=r$, then $\|WW^T - W^*(W^*)^T\|_F^2 \geq 2(\sqrt{2}-1)\sigma_r^2(W^*)\|W-W^*\|_F^2$.
    \item If $\rank(U^*)=r$, then $\|UU^T - U^*(U^*)^T\|_F^2 \geq 2(\sqrt{2}-1)\sigma_r^2(U^*)\|U-U^*\|_F^2$.
\end{itemize}
\end{lemma}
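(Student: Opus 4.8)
The plan is to dispatch the four bullet points separately: the first two are exact Frobenius-norm identities obtained by passing to Gram matrices, and the last two are instances of the standard matrix-factorization distance bound of \cite{tu2016low}.

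For the first bullet, I would block-expand $WW^T-W^*(W^*)^T$ into the diagonal blocks $UU^T-U^*(U^*)^T$, $VV^T-V^*(V^*)^T$ and the off-diagonal block $UV^T-U^*(V^*)^T = M-M^*$, which gives
\[ \|WW^T-W^*(W^*)^T\|_F^2 = \|UU^T-U^*(U^*)^T\|_F^2 + 2\|M-M^*\|_F^2 + \|VV^T-V^*(V^*)^T\|_F^2 . \]
The claimed inequality then reduces to
\[ 2\|M-M^*\|_F^2 + \|U^TU-V^TV\|_F^2 \;\geq\; \|UU^T-U^*(U^*)^T\|_F^2 + \|VV^T-V^*(V^*)^T\|_F^2 . \]
Introducing $P:=U^TU$, $Q:=V^TV$, $S:=(U^*)^TU^*=(V^*)^TV^*$, $X:=U^TU^*$, $Y:=V^TV^*$ and using $\|AA^T\|_F^2=\|A^TA\|_F^2$, $\langle AA^T,BB^T\rangle=\|A^TB\|_F^2$, together with $\|M-M^*\|_F^2 = \langle P,Q\rangle - 2\langle X,Y\rangle + \|S\|_F^2$, both sides become explicit polynomials in $P,Q,S,X,Y$ and their difference collapses to $2\|X-Y\|_F^2 = 2\|U^TU^*-V^TV^*\|_F^2\geq 0$. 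The balance hypothesis $(U^*)^TU^*=(V^*)^TV^*$ is exactly what makes the $S$-terms cancel. The second bullet is the same bookkeeping with $U=U^*$, $V=V^*$: one gets $\|W^*(W^*)^T\|_F^2 = 2\|U^*(U^*)^T\|_F^2 + 2\|M^*\|_F^2$, and then $\|U^*(U^*)^T\|_F^2 = \|(U^*)^TU^*\|_F^2 = \langle (U^*)^TU^*,(V^*)^TV^*\rangle = \|U^*(V^*)^T\|_F^2 = \|M^*\|_F^2$, so the right-hand side is $4\|M^*\|_F^2$.

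For the third and fourth bullets, recall that, by the convention fixed above, $W^*$ is the element of the orbit $\mathcal{X}^*$ closest to $W$, so $\|W-W^*\|_F = \min\{\|W-W_0^*R\|_F : R^TR=RR^T=I_r\}$ for a fixed representative $W_0^*$; hence these are exactly the factorization distance bound $\|ZZ^T-Z^*(Z^*)^T\|_F^2 \geq 2(\sqrt2-1)\,\sigma_r^2(Z^*)\,\mathrm{dist}^2(Z,Z^*)$, valid whenever $Z^*$ has full column rank $r$, applied to $(Z,Z^*)=(W,W^*)$ and, with the target taken in its optimally aligned form, to $(Z,Z^*)=(U,U^*)$. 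I would prove this bound by first reducing to the case where $R=I$ is the optimal orthogonal alignment of $Z$ to $Z^*$ — which leaves $Z^*(Z^*)^T$, $\sigma_r(Z^*)$ and $\mathrm{dist}(Z,Z^*)$ unchanged — so that $\Delta:=Z-Z^*$ satisfies the symmetry relation $(Z^*)^T\Delta=\Delta^TZ^*$ coming from first-order optimality of the Procrustes problem, and then expanding $ZZ^T-Z^*(Z^*)^T = \Delta(Z^*)^T + Z^*\Delta^T + \Delta\Delta^T$. In the squared Frobenius norm the quadratic-in-$\Delta$ term obeys $\|\Delta(Z^*)^T+Z^*\Delta^T\|_F^2 \geq 2\sigma_r^2(Z^*)\|\Delta\|_F^2$ (the symmetry discards a nonnegative cross term), and the cubic and quartic remainders are controlled by splitting into the regimes $\|\Delta\|_F\leq c\,\sigma_r(Z^*)$ and $\|\Delta\|_F>c\,\sigma_r(Z^*)$; in the latter $\|\Delta\Delta^T\|_F^2$ alone dominates, and optimizing the threshold $c$ between the two regimes produces the constant $2(\sqrt2-1)$.

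The routine part is the first two identities, a few lines of Frobenius-norm algebra. The main obstacle is the sharp constant in the last two bullets: one must use first-order optimality of the orthogonal alignment correctly to get the symmetry of $(Z^*)^T\Delta$, and then carry out the two-regime case analysis carefully so that the constant comes out to exactly $2(\sqrt2-1)$ rather than something weaker. Since Lemma~\ref{lem:tech} is quoted from \cite{tu2016low,zhu2018global}, in the final write-up I would either reproduce this short argument or simply cite those references.
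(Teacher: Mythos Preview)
The paper does not prove this lemma at all: it is stated with the citation \cite{tu2016low,zhu2018global} and then used without further justification. Your proposal therefore goes beyond what the paper does, and your own closing remark---that in the final write-up one may simply cite those references---already matches the paper's treatment exactly.

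For what it is worth, your sketch is correct. The block expansion of $WW^T-W^*(W^*)^T$ together with the Gram-matrix bookkeeping does reduce the first bullet to $2\|U^TU^*-V^TV^*\|_F^2\ge 0$, using the balance condition $(U^*)^TU^*=(V^*)^TV^*$ exactly as you say; the second bullet is an immediate specialization. For the last two bullets, the paper's conventions (stated just before the lemma) do make $W^*$ and $U^*$ the Procrustes-optimal representatives, so the symmetry of $(Z^*)^T\Delta$ is available and the Tu et al.\ two-regime argument goes through with the constant $2(\sqrt{2}-1)$. The only point where your outline is slightly loose is the handling of the cubic cross term $\langle \Delta(Z^*)^T+Z^*\Delta^T,\Delta\Delta^T\rangle$: symmetry of $(Z^*)^T\Delta$ alone does not make it nonnegative (positive semidefiniteness would), so the two-regime split genuinely has to absorb it; but that is precisely what the argument in \cite{tu2016low} does, and you flag this as the main obstacle.
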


The proof of Theorem \ref{thm:strict-saddle-1} follows from the following sequence of lemmas. We first identify two cases when the gradient is large.
\begin{lemma}
Given a constant $\epsilon>0$, if 
\[ \|U^TU - V^TV\|_F \geq \epsilon, \]
then
\[ \|\nabla \rho(U,V)\|_F \geq \mu (\epsilon/r)^{3/2}. \]
\end{lemma}
\begin{proof}

Using the relationship between the $2$-norm and the Frobenius norm, we have
\[ \|U^TU - V^TV\|_2 \geq r^{-1}\|U^TU - V^TV\|_F \geq \epsilon / r. \]
Let $q\in\mathbb{R}^r$ be an eigenvector of $U^TU - V^TV$ such that
\[ \|q\|_2 = 1,\quad \left|q^T(U^TU - V^TV)q\right| = \|U^TU - V^TV\|_2. \]
We consider the direction
\[ \Delta := \hat{W} qq^T. \]
Then, we can calculate that
\[ \|\Delta\|_F^2 = \tr\left( \hat{W} qq^Tqq^T\hat{W}^T \right) =\tr\left( q^T\hat{W}^T\hat{W} q \right) = q^T(U^TU+V^TV)q. \]
In addition, we have
\begin{align*}
    \langle \nabla h_a(U,V) ,\Delta \rangle &= \left\langle \begin{bmatrix} \nabla f_a(M) V\\ \left[\nabla f_a(M)\right]^T U \end{bmatrix}, \begin{bmatrix} U qq^T\\ -Vqq^T \end{bmatrix} \right\rangle\\
    &= \tr\left[V^T [\nabla f_a(M)]^TU qq^T \right] - \tr\left[U^T \nabla f_a(M)V qq^T \right]\\
    &= q^T \left[ V^T [\nabla f_a(M)]^TU \right]q - q^T \left[ U^T \nabla f_a(M)V \right]q = 0.
\end{align*}
and
\begin{align*}
    \left|\left\langle \frac{\mu}{4}\nabla g(U,V), \Delta \right\rangle\right| &= \mu\left|\left\langle \hat{W}\hat{W}^T W, Wqq^T \right\rangle\right| \\
    &= \mu \left|\tr\left[ (U^TU-V^TV)(U^TU+V^TV)qq^T \right] \right|\\
    &= \mu \left| q^T(U^TU-V^TV)(U^TU+V^TV)q \right|\\
    &= \mu \|U^TU - V^TV\|_2 \cdot q^T(U^TU+V^TV)q\\
    &= \mu \|U^TU - V^TV\|_2 \cdot \sqrt{q^T(U^TU+V^TV)q} \cdot \|\Delta\|_F.
\end{align*}
Hence, Cauchy's inequality implies that
\begin{align*}
    \|\nabla \rho(U,V)\|_F &\geq \frac{\left|\langle \nabla \rho(U,V), \Delta \rangle\right|}{\|\Delta\|_F} = \mu \|U^TU - V^TV\|_2 \cdot \sqrt{q^T(U^TU+V^TV)q}.
\end{align*}
Using the fact that
\[ q^T(U^TU+V^TV)q \geq \left| q^T(U^TU-V^TV)q \right| = \|U^TU - V^TV\|_2, \]
we obtain
\[ \|\nabla \rho(U,V)\|_F \geq \mu \|U^TU - V^TV\|_2^{3/2} \geq \mu (\epsilon/r)^{3/2}.  \]
\end{proof}

\begin{lemma}
Given a constant $\epsilon>0$, if 
\[ \frac{1-\delta}{3} \leq \mu < 1-\delta,\quad  \|WW^T\|_F^{3/2} \geq \max\left\{\left(\frac{1+\delta}{1-\mu-\delta}\right)^2 \|W^*(W^*)^T\|_F^{3/2}, \frac{4\sqrt{r}\lambda}{1-\mu-\delta} \right\}, \]
then
\[ \|\nabla \rho(U,V)\|_F \geq \lambda. \]
\end{lemma}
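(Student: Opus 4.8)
The plan is to test $\nabla\rho(U,V)$ against the radial direction $\Delta := W = \begin{bmatrix} U \\ V\end{bmatrix}$ and use Cauchy--Schwarz in the factorization space, $\|\nabla\rho(U,V)\|_F \ge \langle \nabla\rho(U,V), W\rangle / \|W\|_F$. Geometrically, when $\|WW^T\|_F$ is much larger than $\|W^*(W^*)^T\|_F$ the point lies far ``outside'' the ground truth, so scaling it toward the origin strictly decreases $\rho$, and the radial derivative $\langle \nabla\rho(U,V), W\rangle$ must be a large positive multiple of $\|WW^T\|_F^2$; since $\|W\|_F$ will be controlled by $\|WW^T\|_F^{1/2}$, this produces a lower bound of order $\|WW^T\|_F^{3/2}$, which is exactly the quantity the two hypotheses bound from below. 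Concretely, with $M := UV^T$, using $\nabla h_a(U,V) = \begin{bmatrix} \nabla f_a(M) V \\ [\nabla f_a(M)]^T U\end{bmatrix}$ and $\nabla g(U,V) = 4\hat W\hat W^T W$, a short computation gives the identity
\[
\langle \nabla\rho(U,V), W\rangle = 2\langle \nabla f_a(M), M\rangle + \mu\, g(U,V).
\]

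\textbf{Step 1 (a RIP lower bound on $\langle \nabla f_a(M), M\rangle$).} Apply the mean value theorem to $f_a$ along the segment from $M$ to $0$, and separately from $M^*_a$ to $0$; every matrix appearing in the resulting Hessian curvature forms has rank at most $2r$, so the $\delta$-RIP$_{2r,2r}$ bounds are applicable. Combining this with $\langle \nabla f_a(M^*_a), M^*_a\rangle = 0$ (first-order stationarity of the rank-$r$ matrix $M^*_a$, cf.\ Assumption~\ref{asp:2} and the optimality conditions) and with $f_a(M) \ge f_a(M^*_a)$ (valid because $M=UV^T$ has rank $\le r$, hence is feasible, and $M^*_a$ is the global minimum), one obtains
\[
2\langle \nabla f_a(M), M\rangle \;\ge\; (1-\delta)\|M\|_F^2 - (1+\delta)\|M^*_a\|_F^2 .
\]

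\textbf{Step 2 (translating to $W$ and finishing).} Combine the last inequality with the elementary identity $\|WW^T\|_F^2 = 4\|M\|_F^2 + \|U^TU - V^TV\|_F^2$ and with $\|W^*(W^*)^T\|_F^2 = 4\|M^*_a\|_F^2$ (Lemma~\ref{lem:tech}); because $\mu \ge (1-\delta)/3 > (1-\delta)/4$, the coefficient of $g(U,V)=\|U^TU-V^TV\|_F^2$ that is produced is nonnegative and may be discarded, leaving
\[
\langle \nabla\rho(U,V), W\rangle \;\ge\; \tfrac{1-\delta}{4}\,\|WW^T\|_F^2 - \tfrac{1+\delta}{4}\,\|W^*(W^*)^T\|_F^2 .
\]
The first hypothesis makes $\|WW^T\|_F$ a large enough multiple of $\|W^*(W^*)^T\|_F$ for the right-hand side to be at least $\tfrac{1-\mu-\delta}{4}\|WW^T\|_F^2$. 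Finally, since $U^TU + V^TV$ is an $r\times r$ positive semidefinite matrix, Cauchy--Schwarz on its eigenvalues gives $\|W\|_F^2 = \tr(U^TU + V^TV) \le \sqrt r\,\|U^TU+V^TV\|_F = \sqrt r\,\|WW^T\|_F$, i.e.\ $\|W\|_F \le r^{1/4}\|WW^T\|_F^{1/2}$; dividing, $\|\nabla\rho(U,V)\|_F \ge \tfrac{1-\mu-\delta}{4}\,r^{-1/4}\,\|WW^T\|_F^{3/2}$, and the second hypothesis $\|WW^T\|_F^{3/2} \ge 4\sqrt r\,\lambda/(1-\mu-\delta)$ then yields $\|\nabla\rho(U,V)\|_F \ge r^{1/4}\lambda \ge \lambda$.

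\textbf{Main obstacle.} The delicate point is Step~1: one must verify that every argument fed into the Hessian form has rank at most $2r$ so that RIP is usable, and one must exploit \emph{both} the first-order stationarity of $M^*_a$ \emph{and} its global optimality (not merely a local property). The second point requiring care is tracking the constants through Step~2 so that precisely the two stated thresholds on $\|WW^T\|_F$ suffice; the fractional power $3/2$ and the factor $\sqrt r$ both originate from the Cauchy--Schwarz bound $\|W\|_F \le r^{1/4}\|WW^T\|_F^{1/2}$.
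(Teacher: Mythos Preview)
Your overall plan coincides with the paper's: test $\nabla\rho$ against the radial direction $\Delta=W$, use the identity $\langle\nabla\rho(U,V),W\rangle=2\langle\nabla f_a(M),M\rangle+\mu\,g(U,V)$, convert to $\|WW^T\|_F$ via $\|WW^T\|_F^2=4\|M\|_F^2+g(U,V)$ and $\|W^*(W^*)^T\|_F^2=4\|M^*_a\|_F^2$, and finish with $\|W\|_F\le r^{1/4}\|WW^T\|_F^{1/2}$ (your Cauchy--Schwarz bound is in fact sharper than the paper's $\sqrt r$). The radial identity and the Step~2 algebra are correct.

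The genuine gap is quantitative and lies in Step~1. Your two-segment mean-value argument is valid and gives
\[
2\langle\nabla f_a(M),M\rangle\ \ge\ (1-\delta)\|M\|_F^2-(1+\delta)\|M^*_a\|_F^2,
\]
but this is weaker by a factor of~$2$ in the leading term than what the paper uses. The paper writes
\[
\langle\nabla f_a(M),M\rangle=\int_0^1[\nabla^2 f_a(M^*_a+s(M-M^*_a))](M-M^*_a,M)\,ds\ \ge\ (1-\delta)\|M\|_F^2-(1+\delta)\|M\|_F\|M^*_a\|_F,
\]
and then absorbs the cross term with Young's inequality (parameter $c=(1-\delta-\mu)/2$), obtaining the stronger lower bound $\langle\nabla\rho,W\rangle\ge\tfrac{1-\delta-\mu}{2}\|WW^T\|_F^2-\tfrac{(1+\delta)^2}{4(1-\delta-\mu)}\|W^*(W^*)^T\|_F^2$. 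That extra factor of $2$ is exactly what makes the stated thresholds sufficient.

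With your Step~1 bound, the intermediate claim in Step~2 (``the right-hand side is at least $\tfrac{1-\mu-\delta}{4}\|WW^T\|_F^2$'') is equivalent to $\|WW^T\|_F^2\ge\frac{1+\delta}{\mu}\|W^*(W^*)^T\|_F^2$, whereas the first hypothesis only yields $\|WW^T\|_F^2\ge\bigl(\tfrac{1+\delta}{1-\mu-\delta}\bigr)^{8/3}\|W^*(W^*)^T\|_F^2$. At the admissible boundary $\delta=0$, $\mu=\tfrac13$ this requires $(3/2)^{8/3}\ge 3$, which is false ($(3/2)^8=6561/256<27=3^3$). Tracing the rest of your argument at $r=1$, $\delta=0$, $\mu=\tfrac13$ shows the final lower bound you obtain is only $\tfrac56\,\lambda<\lambda$, so the proof as written does not close. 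The fix is local: replace your Step~1 by the direct integral estimate above (every matrix fed to the Hessian there has rank $\le 2r$, so RIP applies), and then proceed exactly as in the paper with Young's inequality; your Step~2 identities and the $r^{1/4}$ norm bound can be reused unchanged.
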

\begin{proof}
Choosing the direction $\Delta := W$, we can calculate that
\begin{align}\label{eqn:5-5}
    \langle \nabla \rho(U,V), \Delta \rangle = 2\langle \nabla f_a(UV^T), UV^T\rangle + \mu \|U^TU - V^TV\|_F^2.
\end{align}
Using the $\delta$-RIP$_{2r,2r}$ property, we have
\begin{align*}
    [\nabla^2 f_a(N)](M,M) \geq (1-\delta) \|M\|_F^2,\quad [\nabla^2 f_a(N)](M^*,M) \leq (1+\delta) \|M\|_F\|M^*\|_F,
\end{align*}
where $N\in\mathbb{R}^{n\times m}$ is every matrix with rank at most $2r$. Then, the first term can be estimated as
\begin{align*}
    \langle \nabla f_a(UV^T), UV^T\rangle &= \int_0^1 [\nabla^2 f_a(M^* + s(M-M^*)][ M - M^*, M ]~ds\\
    &\geq (1-\delta)\|M\|_F^2 - (1+\delta) \|M^*\|_F \|M\|_F.
\end{align*}
The second term is 
\[ \mu\|U^TU - V^TV\|_F^2 = \mu \left(\|UU^T\|_F^2 + \|VV^T\|_F^2\right) - 2\mu\|M\|_F^2. \]
%
Substituting into equation \eqref{eqn:5-5}, it follows that
\begin{align*}
    \langle\nabla \rho(U,V), \Delta \rangle &\geq \mu\left(\|UU^T\|_F^2 + \|VV^T\|_F^2\right) + 2(1-\delta-\mu)\|M\|_F^2 - 2(1+\delta)\|M^*\|_F\|M\|_F\\
    &\geq \mu\left(\|UU^T\|_F^2 + \|VV^T\|_F^2\right) + 2(1-\delta-\mu)\|M\|_F^2 - 2c\|M\|_F^2 - \frac{(1+\delta)^2}{2c}\|M^*\|_F^2\\
    &\geq \min\left\{ \mu , 1-\delta-\mu-c  \right\} \|WW^T\|_F^2  - \frac{(1+\delta)^2}{2c}\|M^*\|_F^2,
\end{align*}
where $c\in(0,1-\delta-\mu)$ is a constant to be designed later. 
Using equality that $(U^*)^TU^* = (V^*)^TV^*$, Lemma \ref{lem:tech} gives
\[ \|W^*(W^*)^T\|_F^2 = 4\|M^*\|_F^2. \]
As a result, 
\begin{align*}
    \langle\nabla \rho(U,V), \Delta \rangle &\geq \min\left\{ \mu , 1-\delta-\mu-c  \right\} \|WW^T\|_F^2  - \frac{(1+\delta)^2}{8c}\|W^*(W^*)^T\|_F^2.
\end{align*}
Now, choosing
\[ c = \frac{1-\delta-\mu}{2} \]
and noticing that $\mu \geq (1-\delta-\mu)/2$, it yields that
\begin{align}\label{eqn:5-6}
    \langle\nabla \rho(U,V), \Delta \rangle &\geq \frac{1-\delta-\mu}{2} \|WW^T\|_F^2  - \frac{(1+\delta)^2}{4(1-\delta-\mu)}\|W^*(W^*)^T\|_F^2.
\end{align}
On the other hand,
\[ \|\Delta\|_F = \|W\|_F \leq \sqrt{r} \|WW^T\|_F^{1/2}. \]
Combining with inequality \eqref{eqn:5-6} and using the assumption of this lemma, one can write
\begin{align*}
    \|\nabla \rho(U,V)\|_F &\geq \frac{\langle\nabla \rho(U,V), \Delta \rangle}{\|\Delta\|_F} \\
    &\geq \frac{1-\delta-\mu}{2\sqrt{r}} \|WW^T\|_F^{3/2}  - \frac{(1+\delta)^2}{4\sqrt{r}(1-\delta-\mu)}\|W^*(W^*)^T\|_F^2 \|WW^T\|_F^{-1/2}\\
    &\geq \frac{1-\delta-\mu}{2\sqrt{r}} \|WW^T\|_F^{3/2}  - \frac{(1+\delta)^2}{4\sqrt{r}(1-\delta-\mu)}\|W^*(W^*)^T\|_F^{3/2}\\
    &\geq \frac{1-\delta-\mu}{4\sqrt{r}} \|WW^T\|_F^{3/2} \geq \lambda.
\end{align*}
\end{proof}

Using the above two lemmas, we only need to focus on points such that
\[ \|U^TU - V^TV\|_F = o(1),\quad \|WW^T\|_F = O(1). \]
The following lemma proves that if $(U,V)$ is an approximate first-order critical point with a small singular value $\sigma_r(W)$, then the Hessian of the objective function at this point has a negative curvature.
\begin{lemma}\label{lem:5-1}
Consider positive constants $\alpha,C,\epsilon,\lambda$ such that
\begin{align}\label{eqn:5-901} \epsilon^2 \leq (\sqrt{2}-1)\sigma_r^2(W^*) \cdot \alpha^2,\quad G > \mu \left( \epsilon + \frac{4H^2}{G^2}\right) + \frac{(1+\delta)H^2}{G^2} , \end{align}
where $G:= \|\nabla f_a(M)\|_2$ and $H:=\lambda + \mu\epsilon C$. If
\begin{align*} 
\|U^TU - V^TV\|_F^2 &\leq \epsilon^2,\quad \|WW^T\|_F \leq C^2,\quad \|W - W^*\|_F \geq \alpha,\quad \|\nabla \rho(U,V)\|_F \leq \lambda
\end{align*}
and
\begin{align}\label{eqn:cond} \sigma_r^2(W) \leq \frac{2}{1+\delta} \left[ G - \mu\left( \epsilon + \frac{4H^2}{G^2} \right) - \frac{(1+\delta)H^2}{G^2} \right] - 2\tau \end{align}
for some positive constant $\tau$, then it holds that
\[ \lambda_{min}(\nabla^2 \rho(U,V)) \leq - (1+\delta)\tau. \]
\end{lemma}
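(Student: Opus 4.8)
The plan is to exhibit an explicit descent direction in the factorized space along which the Hessian of $\rho$ is strictly negative, and then to lower bound the defect by the quantities appearing in \eqref{eqn:cond}. Since $\sigma_r(W)$ is small, we may pick a unit vector $q\in\mathbb{R}^r$ with $\|Wq\|_2 = \sigma_r(W)$, and consider the rank-one perturbation of the factors obtained by adding a component along the bottom singular direction of $\nabla f_a(M)$. Concretely, write $\nabla f_a(M) = \sum_i \tau_i a_i b_i^T$ in SVD form with $\tau_1 = G = \|\nabla f_a(M)\|_2$, and take $\Delta_U := a_1 q^T$ (or $b_1 q^T$ for $\Delta_V$), so that the matrix direction $K := U\Delta_V^T + \Delta_U V^T$ picks out the curvature term $[\nabla^2 f_a(M)](K,K)\le (1+\delta)\|K\|_F^2$ while the first-order term $2\langle\nabla f_a(M),\Delta_U\Delta_V^T\rangle$ contributes a large negative amount of order $-2G$ once we account for $\sigma_r(W)$ being small. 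First I would compute $[\nabla^2\rho(U,V)](\Delta,\Delta)$ exactly using Theorem \ref{thm:asym-condition}, splitting it into the $f_a$-Hessian term, the cross term $2\langle\nabla f_a(M),\Delta_U\Delta_V^T\rangle$, and the regularizer term $\frac{\mu}{2}\|\cdots\|_F^2$.

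Next I would bound each piece. For the regularizer term, use $\|U^TU-V^TV\|_F\le\epsilon$ together with $\|WW^T\|_F\le C^2$ to show it is at most something like $\mu(\epsilon + 4H^2/G^2)\|\Delta\|_F^2$, where the $H^2/G^2$ correction comes from the fact that $(U,V)$ is only an \emph{approximate} first-order critical point with $\|\nabla\rho\|_F\le\lambda$: from the first-order defect one extracts $\|\nabla f_a(M)V\|_F$ and $\|[\nabla f_a(M)]^TU\|_F$ of order $H = \lambda+\mu\epsilon C$, hence the components of $U,V$ that are \emph{not} annihilated by $\nabla f_a(M)$ have norm controlled by $H/G$. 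For the cross term, the same approximate-criticality bound shows that the ``bad'' part of $\langle\nabla f_a(M),\Delta_U\Delta_V^T\rangle$ is $O(H^2/G^2)$, and the ``good'' part is exactly $-G\sigma_r(W)^2$ up to the approximation, while the $f_a$-Hessian term is $\le (1+\delta)\sigma_r(W)^2$ (the direction $K$ has Frobenius norm essentially $\sigma_r(W)$). Collecting terms gives

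\begin{align*}
\lambda_{min}(\nabla^2\rho(U,V)) \le \frac{[\nabla^2\rho(U,V)](\Delta,\Delta)}{\|\Delta\|_F^2} \le -\,2\Bigl[G - \mu\Bigl(\epsilon + \tfrac{4H^2}{G^2}\Bigr) - \tfrac{(1+\delta)H^2}{G^2}\Bigr] + (1+\delta)\sigma_r^2(W),
\end{align*}

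and then substituting the hypothesis \eqref{eqn:cond} on $\sigma_r^2(W)$ yields $\lambda_{min}(\nabla^2\rho(U,V)) \le -(1+\delta)\tau$, as claimed. The condition $\epsilon^2\le(\sqrt2-1)\sigma_r^2(W^*)\alpha^2$ together with $\|W-W^*\|_F\ge\alpha$ and Lemma \ref{lem:tech} is what guarantees $\rank(W)<r$ cannot be ``spuriously forced'' — more precisely it ensures $\sigma_r(W)$ can legitimately be small while staying a bounded distance $\alpha$ from the solution set, so that the regime considered is nonempty and consistent; it also lets us pass between the matrix distance and the factor distance.

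The main obstacle I anticipate is the careful bookkeeping of the ``approximate'' first-order conditions: in the exact second-order-critical-point analysis one has $\nabla f_a(M)V = 0$ and $[\nabla f_a(M)]^TU = 0$ exactly, so the cross term is cleanly $-G\sigma_r(W)^2$ and the regularizer direction $\hat W qq^T$ interacts trivially; here every such orthogonality holds only up to an error of size $H/G$, and these errors enter quadratically through products like $\langle\nabla f_a(M),\Delta_U\Delta_V^T\rangle$ and through $\|\Delta\|_F$ in the denominator of the Rayleigh quotient. Keeping all the $H^2/G^2$ and $\mu\epsilon$ error terms aligned with the exact form of the hypothesis \eqref{eqn:5-901}–\eqref{eqn:cond}, and checking that $\|\Delta\|_F$ is bounded below by a constant multiple of $\sigma_r(W)$ (so that dividing by it does not destroy the estimate), is the delicate part; everything else is a direct expansion using the $\delta$-RIP$_{2r,2r}$ property and Cauchy--Schwarz.
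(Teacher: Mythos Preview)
Your approach is the same as the paper's: pick $q$ with $\|Wq\|_2=\sigma_r(W)$, take the top singular pair $(u,v)$ of $\nabla f_a(M)$, set $\Delta_U:=-uq^T$, $\Delta_V:=vq^T$, and expand $[\nabla^2\rho](\Delta,\Delta)$ into cross term, $f_a$-Hessian, and regularizer. Your final displayed inequality matches the paper's bound on $[\nabla^2\rho](\Delta,\Delta)$ and the substitution of \eqref{eqn:cond} goes through.

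That said, your verbal account of where the pieces come from is off in two places, and would trip you up if you tried to fill in the details as described. First, the cross term is \emph{exactly} $\langle\nabla f_a(M),\Delta_U\Delta_V^T\rangle=\langle\nabla f_a(M),-uv^T\rangle=-G$; there is no ``bad part'' and no factor $\sigma_r(W)^2$. The $H^2/G^2$ corrections do not come from the cross term at all; they enter through the $f_a$-Hessian bound (via $\|U^Tu\|_F,\|V^Tv\|_F\le H/G$, since $u=\nabla f_a(M)v/G$ and $\|U^T\nabla f_a(M)\|_F\le H$) and through the regularizer's $\|U^T\Delta_U-\ldots\|_F^2$ piece. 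Second, $\|\Delta\|_F^2=\|uq^T\|_F^2+\|vq^T\|_F^2=2$ exactly, independent of $\sigma_r(W)$, so there is no issue of bounding it below; your displayed inequality is the raw quadratic form, not the Rayleigh quotient, and dividing by $2$ gives the paper's
\[
\lambda_{\min}(\nabla^2\rho)\le -G+\tfrac{1+\delta}{2}\sigma_r^2(W)+\mu\epsilon+(4\mu+1+\delta)\tfrac{H^2}{G^2},
\]
after which \eqref{eqn:cond} yields $-(1+\delta)\tau$ on the nose. Finally, the first condition in \eqref{eqn:5-901} is not actually used inside this lemma's proof; it is a consistency hypothesis carried for the later lemmas, so your paragraph about it guaranteeing the regime is nonempty is beside the point here.
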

\begin{proof}

We choose a singular vector $q$ of $W$ such that
\[ \|q\|_2 = 1,\quad \|Wq\|_2 = \sigma_r(W). \]
Since $\|Wq\|_2 = \sqrt{\|Uq\|_2^2 + \|Vq\|_2^2}$, we have
\[ \|Uq\|_2^2 + \|Vq\|_2^2 = \sigma_r^2(W). \]
We choose singular vectors $u$ and $v$ such that
\[ \|u\|_2 = \|v\|_2 = 1,\quad \|\nabla f_a(M) \|_2 = u^T \nabla f_a(M) v. \]
We define the direction as
\[ \Delta_U := -uq^T,\quad \Delta_V := vq^T,\quad \Delta := \begin{bmatrix} \Delta_U \\ \Delta_V \end{bmatrix},\quad \hat{\Delta} := \begin{bmatrix} \Delta_U \\ -\Delta_V \end{bmatrix}. \]
For the Hessian of $h_a(\cdot,\cdot)$, we can calculate that
\begin{align}\label{eqn:5-8} \langle \nabla f_a(M) ,\Delta_U\Delta_V^T\rangle = -\|\nabla f_a(M)\|_2 = -G \end{align}
and the $\delta$-RIP$_{2r,2r}$ property gives
\begin{align}\label{eqn:5-9}
    \nonumber[\nabla ^2f_a(M)](\Delta_UV^T &+ U\Delta_V^T,\Delta_UV^T + U\Delta_V^T)\\
    \nonumber&\leq (1+\delta) \|\Delta_UV^T + U\Delta_V^T\|_F^2 = (1+\delta) \|-u (Vq)^T + (Uq)v^T\|_F^2\\
    \nonumber&= (1+\delta) \left( \|Vq\|_F^2 + \|Uq\|_F^2 \right) - 2(1+\delta) [ q^T (U^Tu)] \cdot [q^T (V^Tv) ]\\
    &\leq (1+\delta)\sigma_r^2(W) + 2(1+\delta) \cdot \|U^Tu\|_F  \|V^Tv\|_F .
\end{align}
Then, we consider the terms coming from the Hessian of the regularizer. First, we have
\begin{align}\label{eqn:5-10}
    \nonumber\langle\hat{\Delta}\hat{W}^T,\Delta W^T\rangle &\leq \|U^TU - V^TV\|_F \cdot \|\Delta_U^T\Delta_U - \Delta_V^T\Delta_V\|_F\\
    &\leq \epsilon \cdot \left[ \|\Delta_U^T\Delta_U\|_F + \|\Delta_V^T\Delta_V\|_F\right] = 2\epsilon.
\end{align} 
Next, we can estimate that
\begin{align}\label{eqn:5-7}
    \nonumber\langle\hat{W}\hat{\Delta}^T,\Delta W^T\rangle + \langle\hat{W}\hat{W}^T,{\Delta}\Delta^T\rangle &= \frac12 \|U^T\Delta_U + \Delta_U^TU - V^T\Delta_V - \Delta_V^T V \|_F^2\\
    \nonumber&\leq 4 \left( \|U^T\Delta_U\|_F^2 + \|V^T\Delta_V\|_F^2 \right)\\
    \nonumber&= 4\left( \|(U^Tu)q^T\|_F^2 + \|(V^Tv)q^T\|_F^2 \right)\\
    &= 4\left( \|U^Tu\|_F^2 + \|V^Tv\|_F^2 \right).
\end{align}
Using the assumption that $\|WW^T\|_F\leq C^2$ and $\|U^TU - V^TV\|_F^2 \leq \epsilon^2$, one can write 
\begin{align*}
    \|\hat{W}\hat{W}^TW\|_F^2 &\leq \|U^TU - V^TV\|_F^2 \cdot \|U^TU + V^TV\|_F \leq \epsilon^2 \|WW^T\|_F \leq \epsilon^2 C^2
\end{align*}
and
\begin{align}\label{eqn:5-101} 
\left\| \begin{bmatrix}\nabla f_a(UV^T)V\\ \nabla f_a(UV^T)^TU  \end{bmatrix} \right\|_F = \|\nabla \rho(U,V) - \mu\hat{W}\hat{W}^TW\|_F \leq \lambda + \mu\epsilon C = H.
\end{align}
The second relation implies that
\begin{align}\label{eqn:5-301} \|\nabla f_a(UV^T)V\|_2 \leq \|\nabla f_a(UV^T)V\|_F \leq H, \quad \|U^T \nabla f_a(UV^T)\|_2 \leq \|U^T \nabla f_a(UV^T)\|_F \leq H. \end{align}
By the definition of $u$ and $v$, it holds that
\[ \|v\|_2 = 1,\quad \|\nabla f_a(M)\|_2 u = \nabla f_a(M){v}. \]
Therefore,
\begin{align*} 
\|U^Tu\|_F^2 &= \frac{\|U^T \nabla f_a(M) {v}\|_F^2}{\|\nabla f_a(M)\|_2^2} \leq \frac{\|U^T \nabla f_a(M)\|_F^2 \|{v}\|_2^2}{\|\nabla f_a(M)\|_2^2} \leq \frac{H^2}{G^2}.
\end{align*}
Similarly, 
\[ \|V^Tv\|_F^2 \leq \frac{H^2}{G^2}. \]
Substituting into \eqref{eqn:5-9} and \eqref{eqn:5-7} yields that
\begin{align}\label{eqn:5-12}
    &\quad[\nabla ^2f_a(M)](\Delta_UV^T + U\Delta_V^T,\Delta_UV^T + U\Delta_V^T) \leq (1+\delta)\sigma_r^2(W) + 2(1+\delta) \cdot \frac{H^2}{G^2}
\end{align}
and
\begin{align}\label{eqn:5-11} \langle\hat{W}\hat{\Delta}^T,\Delta W^T\rangle + \langle\hat{W}\hat{W}^T,{\Delta}\Delta^T\rangle \leq 8 \cdot\frac{H^2}{G^2}. \end{align}
Combining \eqref{eqn:5-8}, \eqref{eqn:5-10}, \eqref{eqn:5-12} and \eqref{eqn:5-11}, it follows that
\begin{align*}
    [\nabla^2 \rho(U,V)](\Delta,\Delta) \leq -2G + (1+\delta)\sigma_r^2(W) + 2\mu\epsilon + [8\mu + 2(1+\delta)] \cdot\frac{H^2}{G^2}.
\end{align*}
Since $\|\Delta\|_F^2 = 2$, the above relation implies
\[ \lambda_{min}(\nabla^2 \rho(U,V)) \leq -G + \frac{1+\delta}{2}\sigma_r^2(W) + \mu \epsilon + (4\mu + 1+\delta) \cdot\frac{H^2}{G^2} \leq - (1+\delta) \tau. \]

\end{proof}
\begin{remark}
The positive constants  $\epsilon$ and $\lambda$ in the proof of Lemma \ref{lem:5-1} can be chosen to be arbitrarily small with $\alpha,C$ fixed. Hence, we may choose small enough $\epsilon$ and $\lambda$ such that the assumptions given in inequality \eqref{eqn:5-901} are satisfied. This lemma resolves the case when the minimal singular value $\sigma_r^2(W)$ is on the order of $\|\nabla f_a(M)\|_2 / (2+2\delta)$. In the next lemma, we will show that this is the only case when $\delta<1/3$.
\end{remark}

The final step is to prove that condition \eqref{eqn:cond} always holds provided that $\delta < 1/3$ and $\epsilon,\lambda, \tau =  o(1)$.
\begin{lemma}\label{lem:5-2}
Given positive constants $\alpha,C,\epsilon,\lambda$, if
\begin{align*} 
\|U^TU - V^TV\|_F^2 &\leq \epsilon^2,\quad \max\{\|WW^T\|_F,\|W^*(W^*)^T\|_F\} \leq C^2,\\
\|W - W^*\|_F &\geq \alpha,\quad \|\nabla \rho(U,V)\|_F \leq \lambda,\quad \delta < 1/3,
\end{align*}
then the inequality $G \geq c\alpha$ holds for some constant $c>0$ independent of $\alpha,\epsilon,\lambda,C$. Furthermore, there exist two positive constants
\[ \epsilon_0 (\delta,\mu,\sigma_r(M^*_a),\|M^*_a\|_F,\alpha,C),\quad \lambda_0 (\delta,\mu,\sigma_r(M^*_a),\|M^*_a\|_F,\alpha,C) \]
such that
\begin{align}\label{eqn:cond-1} \sigma_r^2(W) \leq \frac{2}{1+\delta} \left[ G - \mu\left( 2 \epsilon + \frac{4H^2}{G^2} \right) - \frac{(1+\delta)H^2}{G^2} \right] \end{align}
whenever 
\begin{align*}
    0 < &\epsilon \leq \epsilon_0 (\delta,\mu,\sigma_r(M^*_a),\|M^*_a\|_F,\alpha,C),\\
    0 < &\lambda \leq \lambda_0 (\delta,\mu,\sigma_r(M^*_a),\|M^*_a\|_F,\alpha,C).
\end{align*} 
Here, $G$ and $H$ are defined in Lemma \ref{lem:5-1}.
\end{lemma}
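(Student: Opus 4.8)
The plan is to treat this lemma as the quantitative (robust) analogue of the statement ``there is no spurious second-order critical point when $\delta<1/3$'' that underlies Theorem~\ref{thm:fixed-pt} and the proof of Theorem~\ref{thm:nes-suf}, keeping track of the two quantities that the qualitative version discards: the distance $\|M-M^*\|_F$, which is bounded below through $\|W-W^*\|_F\ge\alpha$, and the perturbation error $H=\lambda+\mu\epsilon C$ coming from approximate stationarity. Throughout I would use that, under Assumption~\ref{asp:1}, $M^*=M^*_a$ satisfies $\nabla f_a(M^*)=0$, so that for any rank-$\le r$ matrix $M=UV^T$ the $\delta$-RIP$_{2r,2r}$ property gives $\langle\nabla f_a(M),M-M^*\rangle=\int_0^1[\nabla^2 f_a(M^*+s(M-M^*))](M-M^*,M-M^*)\,ds$, which lies in $[(1-\delta)\|M-M^*\|_F^2,(1+\delta)\|M-M^*\|_F^2]$, and that $\|\nabla\rho(U,V)\|_F\le\lambda$ together with $\|U^TU-V^TV\|_F\le\epsilon$ and $\|WW^T\|_F\le C^2$ forces $\|\nabla f_a(M)V\|_F\le H$ and $\|\nabla f_a(M)^TU\|_F\le H$, exactly as in \eqref{eqn:5-101}.

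For the bound $G\ge c\alpha$: expanding $\langle\nabla f_a(M),M-M^*\rangle=\langle\nabla f_a(M)V,U\rangle-\langle\nabla f_a(M),M^*\rangle$ and using $\mathrm{rank}(M-M^*)\le 2r$ to pass from the Frobenius to the nuclear norm, I get $G\sqrt{2r}\,\|M-M^*\|_F\ge\langle\nabla f_a(M),M-M^*\rangle\ge(1-\delta)\|M-M^*\|_F^2$, hence the clean linear bound $G\ge\frac{1-\delta}{\sqrt{2r}}\|M-M^*\|_F$. Next, Lemma~\ref{lem:tech} (first and third bullets, using $\mathrm{rank}(W^*)=r$ from Assumption~\ref{asp:2}) yields $4\|M-M^*\|_F^2\ge 2(\sqrt2-1)\sigma_r^2(W^*)\|W-W^*\|_F^2-\epsilon^2\ge 2(\sqrt2-1)\sigma_r^2(W^*)\alpha^2-\epsilon^2$, so once $\epsilon$ is below a threshold depending only on $\sigma_r(M^*_a)$ and $\alpha$ one has $\|M-M^*\|_F\ge c'\sigma_r(W^*)\alpha$, and combining the two gives $G\ge c\alpha$ with $c$ depending only on $\delta,r,\sigma_r(M^*_a)$.

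For the main inequality \eqref{eqn:cond-1} I would split on the size of $\sigma_r(W)$. If $\sigma_r^2(W)\le\frac{c\alpha}{1+\delta}$, then since $G\ge c\alpha$ and $H\to 0$ as $\epsilon,\lambda\to 0$, the right-hand side of \eqref{eqn:cond-1} is at least $\frac{2}{1+\delta}(c\alpha-o(1))$, which exceeds $\frac{c\alpha}{1+\delta}\ge\sigma_r^2(W)$ once $\epsilon_0,\lambda_0$ are small enough. If instead $\sigma_r^2(W)>\frac{c\alpha}{1+\delta}$, then $\sigma_r(U)^2,\sigma_r(V)^2$ are bounded below by a positive constant (up to an $O(\epsilon)$ balance error), so $\mathcal P_U\nabla f_a(M)$ and $\nabla f_a(M)\mathcal P_V$ have Frobenius norm $O(H)$; equivalently $\nabla f_a(M)=N_1+N_2$ with $N_2=\mathcal P_U^\perp\nabla f_a(M)\mathcal P_V^\perp$ and $\|N_1\|_F$ negligible, i.e.\ $(U,V)$ is an approximate SVP fixed point in the sense of Theorem~\ref{thm:fixed-pt}. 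Suppose \eqref{eqn:cond-1} failed; since $\frac{1+\delta}{2}\sigma_r^2(W)\le(1+\delta)\sigma_r(M)+O(\epsilon^2)$ in this regime, this forces $\|\nabla f_a(M)\|_2\le(1+\delta)\sigma_r(M)+s$ with $s=o(1)$, so that, arguing as in the necessity part of Theorem~\ref{thm:nes-suf} but now with the error terms tracked explicitly, $M$ is within $O(s+H+\epsilon)$ of the minimizer of the SVP projection step at $M$; comparing with the rank-$\le r$ competitor $M^*$ gives $\langle\nabla f_a(M),M-M^*\rangle\le\frac{1+\delta}{2}\|M-M^*\|_F^2+O(s+H+\epsilon)$, and subtracting the RIP lower bound $\langle\nabla f_a(M),M-M^*\rangle\ge(1-\delta)\|M-M^*\|_F^2$ yields $\frac{1-3\delta}{2}\|M-M^*\|_F^2\le O(s+H+\epsilon)$, which is impossible for $\epsilon_0,\lambda_0$ small because $\delta<1/3$ and $\|M-M^*\|_F\ge c'\sigma_r(W^*)\alpha>0$.

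The step I expect to be the main obstacle is precisely the robustification in the second case: converting ``$(U,V)$ is an approximate stationary point with $\|\nabla f_a(M)\|_2\lesssim(1+\delta)\sigma_r(M)$'' into ``$M$ is within an explicitly bounded error of the SVP projection of $M-(1+\delta)^{-1}\nabla f_a(M)$'', since the projections $\mathcal P_U,\mathcal P_V$ degrade as $\sigma_r(W)\to0$ and the top-$r$ singular subspace of $M-(1+\delta)^{-1}\nabla f_a(M)$ is only marginally separated from the orthogonal block $N_2$. The case split on $\sigma_r^2(W)$ versus $c\alpha/(1+\delta)$ is what keeps this quantitative: in the regime where the perturbation argument is delicate, $\sigma_r(W)$ is already bounded below, whereas in the complementary regime \eqref{eqn:cond-1} holds for the trivial reason that $\sigma_r^2(W)$ is small and $G\ge c\alpha$. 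Everything else — the nuclear-norm passages, the $O(\epsilon)$ balance bookkeeping, and the final calibration of $\epsilon_0,\lambda_0$ so that all error terms fall below the fixed gap of order $(1-3\delta)\alpha^2$ — is routine.
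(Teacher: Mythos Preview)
Your overall strategy matches the paper's: establish $G\ge c\alpha$ via RIP and Lemma~\ref{lem:tech}, then argue by contradiction using the SVP one-step/projection comparison with $M^*$. Your nuclear-norm passage $\langle\nabla f_a(M),M-M^*\rangle\le G\cdot\sqrt{2r}\,\|M-M^*\|_F$ for the first part is actually cleaner than what the paper writes.

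There is, however, one imprecision in Case~2 that would derail the argument if taken literally. You write that under the failure of \eqref{eqn:cond-1} ``$M$ is within $O(s+H+\epsilon)$ of the minimizer of the SVP projection step at $M$''. Read as $\|M-\bar M\|_F=O(s+H+\epsilon)$ with $\bar M=\mathcal P_r\bigl(M-(1+\delta)^{-1}\nabla f_a(M)\bigr)$, this is false: when $\|N\|_2\approx\sigma_r(M)$ (which is exactly what the contradiction hypothesis forces), the truncation $\bar M$ can include order-$\sigma_r(M)$ components of $N_4=\mathcal P_U^\perp N\mathcal P_V^\perp$, so $\|M-\bar M\|_F$ is $\Theta(1)$, not $o(1)$. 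What you actually need for the conclusion $\langle\nabla f_a(M),M-M^*\rangle\le\frac{1+\delta}{2}\|M-M^*\|_F^2+O(\cdot)$ is only the weaker statement that $M$ achieves nearly the minimal distance, i.e.\ $\|\tilde M-M\|_F^2-\|\tilde M-\bar M\|_F^2=O(s+H)$. This \emph{does} hold, and it is exactly what the paper proves in its Step~II: writing $-\phi(\bar M)=\tfrac{1+\delta}{2}\bigl[\|N\|_F^2-\|M+N\|_F^2+\|\mathcal P_r(M+N)\|_F^2\bigr]$, decomposing $N=N_1+N_2+N_3+N_4$ along $\mathcal P_U,\mathcal P_V$ (with $\|N_1\|_F,\|N_2\|_F,\|N_3\|_F=O(H/\sigma_r(U))$), and using Weyl to replace $\sigma_i(M+N)$ by $\sigma_i(M+N_4)$. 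Since $M$ and $N_4$ have orthogonal row and column spaces, their singular values interleave, and the identity
\[
\sum_{i=1}^r\sigma_i^2(M+N_4)-\|M\|_F^2=\sum_{j=1}^{r-k}\sigma_j^2(N_4)-\sum_{i=k+1}^r\sigma_i^2(M)
\]
together with $\|N_4\|_2\le\sigma_r(M)+O(s)$ shows every term on the right lies in $[\sigma_r^2(M),(\sigma_r(M)+O(s))^2]$, so the whole expression is $O(rs)$. In other words, your ``main obstacle'' (marginal separation of singular subspaces) is a red herring: only singular \emph{value} perturbation is needed, not Davis--Kahan, and Weyl's inequality requires no gap. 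Once you replace the claim ``$M$ is close to $\bar M$'' by ``$M$ is nearly optimal for the projection'', your Case~2 and the paper's Steps~I--III are the same argument.
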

\begin{proof}

We first prove the existence of the constant $c$. Using Lemma \ref{lem:tech}, one can write 
\[ 4\|M-M^*\|_F^2 \geq \|WW^T - W^*(W^*)^T\|_F^2 - \|U^TU - V^TV\|_F^2 \geq \|WW^T - W^*(W^*)^T\|_F^2 - \epsilon^2. \]
Using Lemma \ref{lem:tech} and the assumption that $\|W-W^*\|_F \geq \alpha$, we have
\begin{align}\label{eqn:5-100} 
\|M-M^*\|_F^2 \geq \frac{\sqrt{2}-1}{2}\sigma_r^2(W^*)\|W-W^*\|_F^2 - \frac{\epsilon^2}{4} \geq \frac{\sqrt{2}-1}{2}\sigma_r^2(W^*) \cdot \alpha^2 - \frac{\epsilon^2}{4}. 
\end{align}
By the definition of $\epsilon$, it follows that
\[ \|M-M^*\|_F^2 \geq \frac{\sqrt{2}-1}{4}\sigma_r^2(W^*) \cdot \alpha^2 > 0. \]
Thus, the $\delta$-RIP$_{2r,2r}$ property gives
\[ \|\nabla f_a(M)\|_F \geq \frac{\langle \nabla f_a(M), M-M^*\rangle}{\|M-M^*\|_F} \geq (1-\delta) \|M-M^*\|_F \geq  \sqrt{\frac{\sqrt{2}-1}{4}} \cdot\sigma_r(W^*) (1-\delta) \cdot \alpha. \]
%
Hence, we have
\[ G = \|\nabla f_a(M) \|_2 \geq \sqrt{\frac{\sqrt{2}-1}{4r}} \cdot \sigma_r(W^*) (1-\delta) \cdot \alpha = c\alpha, \]
where we define
\[ c:= \sqrt{\frac{\sqrt{2}-1}{4r}} \cdot \sigma_r(W^*) (1-\delta). \]

Next, we prove inequality \eqref{eqn:cond-1} by contradiction, i.e., we assume
\begin{align}\label{eqn:cond-2} \sigma_r^2(W) > \frac{2}{1+\delta} \left[ G - \mu\left( 2 \epsilon + \frac{4H^2}{G^2} \right) - \frac{(1+\delta)H^2}{G^2} \right] \geq \frac{2c\alpha}{1+\delta}  + \mathrm{poly}(\epsilon,\lambda). \end{align}
The remainder of the proof is divided into three steps.
\paragraph{Step I.} We first develop a lower bound for $\sigma_r(M)$. 
We choose a vector $p\in\mathbb{R}^r$ such that
\[ \|p\|_F = 1,\quad U^TU p = \sigma_r^2(U) \cdot p. \]
It can be shown that 
\begin{align*}
    \|(Wp)^TW\|_F &= \|p^T U^TU + p^T V^TV\|_F \leq 2\|p^TU^TU\|_F + \|p^T(V^TV - U^TU)\|_F\\
    &\leq 2\sigma^2_r(U) + \|p^T\|_F\|V^TV - U^TU\|_F \leq 2\sigma^2_r(U) + \epsilon.
\end{align*}
On the other hand, since $W$ has rank $r$, it holds that
\[ \left\| (Wp)^T W\right\|_F \geq \sigma_r^2(W) \cdot \|p\|_F = \sigma_r^2(W). \]
Combining the above two estimates, we arrive at
\[ 2\sigma_r^2(U) \geq \sigma_r^2(W) - \epsilon > 0, \]
where the last inequality is from the assumption that $\epsilon,\lambda$ are small and $\sigma_r(W)$ is lower bounded by a positive value in \eqref{eqn:cond-2}. Using the inequality that $\sqrt{1-x} \geq 1-x$ for every $x\in[0,1]$, the above inequality implies that
\begin{align}\label{eqn:5-302} \sigma_r(U) \geq \frac{1}{\sqrt{2}}\sigma_r(W) \cdot \sqrt{1 - \frac{\epsilon}{\sigma_r^2(W)}} \geq \frac1{\sqrt{2}} \sigma_r(W) - \frac{\epsilon}{\sqrt{2}\sigma_r(W)}. \end{align}
Similarly, one can prove that
\[ \sigma_r(V) \geq \frac1{\sqrt{2}} \sigma_r(W) - \frac{\epsilon}{\sqrt{2}\sigma_r(W)}. \]
%
When $\epsilon$ is small enough, we know that $\sigma_r(U),\sigma_r(V) \neq 0$ and both $U,V$ have rank $r$. To lower bound the singular value $\sigma_r(M)$, we consider vectors $x$ such that $\|x\|_2 = 1$ and lower bound $x^T V (U^TU) V^T x$. Since the range of $V (U^TU) V^T$ is a subspace of the range of $V$ and the range of $V$ has exactly dimension $r$, directions $x$ that are in the orthogonal complement of the range of $V$ correspond to exactly $m-r$ zero singular values. Hence, to estimate the $r$-th largest singular value of $M$, we only need to consider directions that are in the range of $V$. Namely, we only consider directions that have the form $x = Vy$ for some vector $y$. Then, we have
\begin{align*} 
x^T V (U^TU) V^T x &= y^T (V^TV)(U^TU)(V^TV)y\\
&= y^T (V^TV)^3y + y^T (V^TV)(U^TU - V^TV)(V^TV)y. 
\end{align*}
First, we bound the second term by calculating that
\begin{align*}
    \|V(V^TV - U^TU)V^T\|_2 &\leq \|V\|_2^2 \|U^TU - V^TV\|_2 \leq \|V^TV\|_F \|U^TU - V^TV\|_F\\
    &\leq \|W^TW\|_F \|U^TU - V^TV\|_F \leq C^2 \epsilon.
\end{align*}
This implies that 
\[ y^T (V^TV)(U^TU - V^TV)(V^TV)y \geq -C^2\epsilon \cdot \|Vy\|_F^2. \]
Next, we assume that $y$ has the decomposition
\[ y = \sum_{i=1}^r c_i v_i, \]
where $v_i$ is an eigenvector of $V^TV$ associated with the eigenvalue $\sigma_i^2(V)$. 
Then, we can calculate that
\begin{align*}
    y^T (V^TV)^3y = \sum_{i=1}^r c_i^2 \sigma_i^6(V),\quad \|Vy\|_F^2 = \sum_{i=1}^r c_i^2 \sigma_i^2(V) = 1.
\end{align*}
Combining the above estimates leads to 
\begin{align*}
    x^T V (U^TU) V^T x &\geq \left[ \frac{\sum_{i=1}^r c_i^2 \sigma_i^6(V)}{\sum_{i=1}^r c_i^2 \sigma_i^2(V)} - C^2 \epsilon\right] \cdot \|Vy\|_F^2\\
    &= \frac{\sum_{i=1}^r c_i^2 \sigma_i^6(V)}{\sum_{i=1}^r c_i^2 \sigma_i^2(V)} - C^2\epsilon \geq \sigma_r^4(V) - C^2\epsilon.
\end{align*}
This implies that
\begin{align}\label{eqn:5-15}
    \nonumber\sigma_r^2(M) &\geq \sigma_r^4(V) - C^2\epsilon \geq \left[ \frac1{\sqrt{2}} \sigma_r(W) - \frac{\epsilon}{\sqrt{2}\sigma_r(W)} \right]^4 - C^2\epsilon\\
    \nonumber&\geq \frac{1}{4}\sigma_r^4(W) - \sigma_r^2(W)\epsilon - \sigma_r^{-2}(W)\epsilon^3 - C^2\epsilon\\
    \nonumber&\geq \frac{1}{4}\sigma_r^4(W) - \sigma_r^{-2}(W)\epsilon^3  - 2C^2\epsilon\\
    \nonumber&\geq \frac{1}{4}\sigma_r^4(W) - \frac{1+\delta}{G} \cdot \epsilon^3  - 2C^2\epsilon\\
    &\geq \frac{1}{4}\sigma_r^4(W) - \frac{1+\delta}{c\alpha} \cdot \epsilon^3  - 2C^2\epsilon.
\end{align}
where the second last inequality is due to \eqref{eqn:cond-2} and the assumption that $\epsilon$ and $\lambda$ are sufficiently small. 

\paragraph{Step II.} Next, we derive an upper bound for $\sigma_r(M)$. We define
\[ \bar{M} := \mathcal{P}_r\left[ M - \frac{1}{1+\delta} \nabla f_a(M) \right], \]
where $\mathcal{P}_r$ is the orthogonal projection onto the low-rank set via SVD. Since $M \neq M^*$ and $\delta < 1/3$, we recall that inequality \eqref{eqn:2-1} gives
\begin{align*}
    -\phi(\bar{M}) &\geq \frac{1-3\delta}{1-\delta} [ f_a(M) - f_a(M^*) ]  \geq  \frac{1-3\delta}{2}\|M-M^*\|_F^2\\
    &\geq \frac{1-3\delta}{2} \left[\frac{\sqrt{2}-1}{2}\sigma_r^2(W^*)\alpha^2 - \frac{\epsilon^2}{4} \right] := K,
\end{align*}
where the second inequality follows from \eqref{eqn:5-100} and
\[ -\phi(\bar{M}) = \langle \nabla f_a(M), M - \bar{M}\rangle - \frac{1+\delta}{2}\|M-\bar{M}\|_F^2. \]
Hence,
\begin{align}\label{eqn:5-13}
    \langle \nabla f_a(M), M - \bar{M}\rangle - \frac{1+\delta}{2}\|M-\bar{M}\|_F^2 \geq K.
\end{align}
When we choose $\epsilon$ to be small enough, it holds that $K > 0$. For simplicity, we define
\[ N := -\frac{1}{1 + \delta} \nabla f_a(M). \]
Then, $\bar{M} = \mathcal{P}_r(M+N)$ and the left-hand side of \eqref{eqn:5-13} is equal to
\begin{align}\label{eqn:5-14} 
\nonumber&\quad \langle \nabla f_a(M), M - \bar{M}\rangle - \frac{1+\delta}{2}\|M-\bar{M}\|_F^2\\
\nonumber&= (1+\delta) \langle N,  \mathcal{P}_r(M+N) - M \rangle - \frac{1+\delta}{2} \| \mathcal{P}_r(M+N) - M \|_F^2\\
\nonumber&= \frac{1+\delta}{2}\left[ \|N\|_F^2 - \| N + M - \mathcal{P}_r(M+N) \|_F^2 \right]\\
&= \frac{1+\delta}{2}\left[ \|N\|_F^2 - \| N + M\|_F^2 + \|\mathcal{P}_r(M+N) \|_F^2 \right].
\end{align}
Similar to the proof of inequality \eqref{eqn:5-301}, we can prove that
\[ \| NV \|_F \leq \tilde{H} := \frac{H}{1+\delta},\quad \|U^TN\|_F \leq \tilde{H}. \]
Then, we have
\begin{align*}
    -\tr[N^T (UV^T)] &\leq \|U^TN\|_F \|V\|_F \leq \tilde{H} \cdot \|W\|_F \leq \tilde{H} \cdot \sqrt{ \sqrt{r} \|WW^T\|_F } \leq \sqrt[4]{r}C \cdot \tilde{H}.
\end{align*}
Using the above relation, we obtain
\begin{align*}
    \|N\|_F^2 - \| N + M\|_F^2 &= -2\tr[ N^T (UV^T) ] - \|M\|_F^2 \leq 2 \sqrt[4]{r}C \cdot \tilde{H} - \|M\|_F^2.
\end{align*}
Suppose that $\mathcal{P}_U$ and $\mathcal{P}_V$ are the orthogonal projections onto the column spaces of $U$ and $V$, respectively. We define
\[ N_1 := \mathcal{P}_U N \mathcal{P}_V,~ N_2 := \mathcal{P}_U N (I-\mathcal{P}_V),~ N_3 := (I-\mathcal{P}_U)N\mathcal{P}_V,~ N_4 := (I-\mathcal{P}_U)N(I-\mathcal{P}_V). \]
Then, recalling the assumption \eqref{eqn:cond-2} and inequality \eqref{eqn:5-302}, it follows that
\begin{align*}
    \|N_1\|_F &= \| \mathcal{P}_U N \mathcal{P}_V \|_F \leq \sigma_r^{-1}(U) \| U^T \mathcal{P}_U N \mathcal{P}_V \|_F \leq \sigma_r^{-1}(U) \| U^T N \|_F \leq \frac{\sqrt{2}\sigma_r(W)}{\sigma_r^2(W) - \epsilon} \cdot \tilde{H}\\
    &\leq \left[\sqrt{\frac{1+\delta}{G}} + \mathrm{poly}(\epsilon,\lambda) \right] \cdot \tilde{H} \leq \left[\sqrt{\frac{1+\delta}{c\alpha}} + \mathrm{poly}(\epsilon,\lambda) \right] \cdot \tilde{H} := \kappa \tilde{H}.
\end{align*}
Similarly, we can prove that
\begin{align*}
    &\|N_1 + N_2\|_F = \| \mathcal{P}_U N\|_F \leq \kappa\tilde{H},\quad \|N_1 + N_3\|_F = \| N \mathcal{P}_V \|_F \leq \kappa \tilde{H},
\end{align*}
which leads to
\[ \|N_2\|_F \leq 2\kappa \tilde{H},\quad \|N_3\|_F \leq 2\kappa \tilde{H}. \]
Using Weyl's theorem, the following holds for every $1\leq i\leq r$:
\begin{align*}
    | \sigma_i(M+N) - \sigma_i(M+N_4) | \leq \|N_1+N_2+N_3\|_2 \leq \|N_1+N_2+N_3\|_F \leq 3\kappa \tilde{H}.
\end{align*}
Therefore, we have
\begin{align}\label{eqn:5-102}
    \nonumber\|\mathcal{P}_r(M+N) \|_F^2 &= \sum_{i=1}^r \sigma_i^2(M+N)\\
    \nonumber&\geq \sum_{i=1}^r \sigma_i^2(M+N_4) - r \cdot 3\kappa \tilde{H} \cdot ( \|M+N\|_2 + \|M+N_4\|_2 )\\
    \nonumber&\geq \sum_{i=1}^r \sigma_i^2(M+N_4) - 6r \kappa \tilde{H} \cdot ( \|M\|_2 + \|N\|_2 )\\
    &\geq \sum_{i=1}^r \sigma_i^2(M+N_4) - 6r \kappa \tilde{H} \cdot \left( \|M\|_F  + \frac{G}{1+\delta} \right).
\end{align}
%
Using the assumption \eqref{eqn:cond-2} and the inequality \eqref{eqn:5-15}, one can write 
\begin{align}\label{eqn:5-303}
    \frac{G}{1+\delta} &\leq \frac{\sigma_r^2(W)}{2} + \mathrm{poly}(\sqrt{\epsilon},\lambda) \leq \sigma_r(M) + \mathrm{poly}(\sqrt{\epsilon},\lambda) \leq \|M\|_F + \mathrm{poly}(\sqrt{\epsilon},\lambda),
\end{align}
where $\mathrm{poly}(\sqrt{\epsilon},\lambda)$ means a polynomial of $\sqrt{\epsilon}$ and $\lambda$. Therefore, we attain the bound
\begin{align}\label{eqn:5-304} 
\nonumber\|M\|_F  + \|N\|_F &\leq 2\|M\|_F + \mathrm{poly}(\sqrt{\epsilon},\lambda) \leq 2 \cdot \frac{\|WW^T\|_F}{\sqrt{2}} + \mathrm{poly}(\sqrt{\epsilon},\lambda)\\
&\leq \sqrt{2}C^2 + \mathrm{poly}(\sqrt{\epsilon},\lambda). 
\end{align}
Substituting back into the previous estimate \eqref{eqn:5-102}, it follows that
\[ \|\mathcal{P}_r(M+N) \|_F^2 \geq \sum_{i=1}^r \sigma_i^2(M+N_4) - 6\sqrt{2} r \kappa \tilde{H}C^2 + \mathrm{poly}(\sqrt{\epsilon},\lambda) = \sum_{i=1}^r \sigma_i^2(M+N_4) + \mathrm{poly}(\sqrt{\epsilon},\lambda). \]
Now, since $M$ and $N_4$ have orthogonal column and row spaces, the maximal $r$ singular values of $M+N_4$ are simply the maximal $r$ singular values of the singular values $M$ and $N_4$, which we assume to be
\[ \sigma_i(M),~i=1,\dots,k \quad \text{and} \quad \sigma_i(N_4),~i=1,\dots,r-k. \]
Now, it follows from \eqref{eqn:5-14} that
\begin{align*}
    &\hspace{-3em}\frac{2}{1+\delta}\left[\langle \nabla f_a(M), M - \bar{M}\rangle - \frac{1+\delta}{2}\|M-\bar{M}\|_F^2\right]\\
    &=\|N\|_F^2 - \| N + M\|_F^2 + \|\mathcal{P}_r(M+N) \|_F^2\\
    &\leq -\sum_{i=1}^r \sigma_i^2(M) + \sum_{i=1}^k \sigma_i^2(M) + \sum_{i=1}^{r-k} \sigma_i^2(N_4) + \mathrm{poly}(\sqrt{\epsilon},\lambda) + 2 \sqrt[4]{r}C \cdot \tilde{H}\\
    &= -\sum_{i=k+1}^{r} \sigma_i^2(M) + \sum_{i=1}^{r-k} \sigma_i^2(N_4) + \mathrm{poly}(\sqrt{\epsilon},\lambda)\\
    &\leq -(r-k) \sigma_r^2(M) + (r-k)\|N_4\|_2^2 + \mathrm{poly}(\sqrt{\epsilon},\lambda)\\
    &\leq -(r-k) \sigma_r^2(M) + (r-k)\|N\|_2^2 + \mathrm{poly}(\sqrt{\epsilon},\lambda).
\end{align*}
If $k=r$, then the above inequality and inequality \eqref{eqn:5-13} imply that
\[ \mathrm{poly}(\sqrt{\epsilon},\lambda) \geq K = O(\alpha^2), \]
which contradicts the assumption that $\epsilon$ and $\lambda$ are small. Hence, it can be concluded that  $r-k\geq1$. Combining with \eqref{eqn:5-13}, we obtain the upper bound
\begin{align}\label{eqn:5-16}
    \nonumber\sigma_r^2(M) &\leq - \frac{2}{1+\delta} \cdot \frac{K}{r-k} + \|N\|_2^2 + \frac{1}{r-k} \cdot \mathrm{poly}(\sqrt{\epsilon},\lambda)\\
    &= - \frac{2}{1+\delta}\cdot \frac{K}{r} + \|N\|_2^2 + \mathrm{poly}(\sqrt{\epsilon},\lambda).
\end{align}

\paragraph{Step III.} In the last step, we combine the inequalities \eqref{eqn:5-15} and \eqref{eqn:5-16}, which leads to
\begin{align*}
    \frac{1}{4}\sigma_r^4(W) - \frac{1+\delta}{c\alpha} \cdot \epsilon^3  - 2C^2\epsilon \leq - \frac{2}{1+\delta}\cdot \frac{K}{r} + \frac{1}{(1+\delta)^2}G^2 +  \mathrm{poly}(\sqrt{\epsilon},\lambda).
\end{align*}
This means that
\begin{align*}
    \sigma_r^4(W) + \frac{8}{1+\delta}\cdot \frac{K}{r} \leq \frac{4}{(1+\delta)^2}G^2 + \mathrm{poly}(\sqrt{\epsilon},\lambda).
\end{align*}
%
Since $K>0$ has lower bounds that are independent of $\epsilon$ and $\lambda$, we can choose $\epsilon$ and $\lambda$ to be small enough such that
\begin{align*}
    \sigma_r^4(W) + \frac{4}{1+\delta}\cdot \frac{K}{r} \leq \frac{4}{(1+\delta)^2}G^2.
\end{align*}
However, recalling the assumption \eqref{eqn:cond-2}, we have
\begin{align*}
    \sigma_r^4(W) &> \frac{4}{(1+\delta)^2} \left[ G - \mu\left( 2 \epsilon + \frac{4H^2}{G^2} \right) - \frac{(1+\delta)H^2}{G^2} \right]^2\\
    &\geq \frac{4}{(1+\delta)^2} G^2 - \frac{16}{(1+\delta)^2} G \cdot \mu\epsilon + \mathrm{poly}(\sqrt{\epsilon},\lambda)\\
    &\geq \frac{4}{(1+\delta)^2} G^2 - \frac{16}{(1+\delta)^2} \mu\epsilon \cdot \frac{1}{\sqrt{2}}(1+\delta) C^2 + \mathrm{poly}(\sqrt{\epsilon},\lambda)\\
    &= \frac{4}{(1+\delta)^2} G^2 + \mathrm{poly}(\sqrt{\epsilon},\lambda),
\end{align*}
where in the third inequality we use inequalities \eqref{eqn:5-303}-\eqref{eqn:5-304} to conclude that
\[ G \leq (1+\delta)\|M\|_F + \mathrm{poly}(\sqrt{\epsilon},\lambda) \leq \frac{1}{\sqrt{2}}(1+\delta)C^2 + \mathrm{poly}(\sqrt{\epsilon},\lambda). \]
The above two inequalities cannot hold simultaneously when $\lambda$ and $\epsilon$ are small enough. This contradiction means that the condition \eqref{eqn:cond-1} holds by choosing 
\begin{align*} 
0<&\epsilon \leq \epsilon_0 (\delta,\mu,\sigma_r(M^*_a),\|M^*_a\|_F,\alpha,C),\\
0<&\lambda \leq \lambda_0 (\delta,\mu,\sigma_r(M^*_a),\|M^*_a\|_F,\alpha,C),
\end{align*}
for some small enough positive constants
\[ \epsilon_0 (\delta,\mu,\sigma_r(M^*_a),\|M^*_a\|_F,\alpha,C),\quad \lambda_0 (\delta,\mu,\sigma_r(M^*_a),\|M^*_a\|_F,\alpha,C). \]

\end{proof}

The only thing left is to piecing everything together.
\begin{proof}[Proof of Theorem \ref{thm:strict-saddle-1}]
We first choose 
\[ C:= \left[\left(\frac{1+\delta}{1-\mu-\delta}\right)^2 \|W^*(W^*)^T\|_F^{3/2} \right]^{1/3}. \]
Then, we select $\epsilon_1$ and $\lambda_1$ as 
\begin{align*} 
\epsilon_1(\delta,r,\mu,\sigma_r(M^*_a),\|M^*_a\|_F,\alpha) &:= \epsilon_0(\delta,r,\mu,\sigma_r(M^*_a),\|M^*_a\|_F,\alpha,C),\\
\lambda_1(\delta,r,\mu,\sigma_r(M^*_a),\|M^*_a\|_F,\alpha) &:= \min\Bigg\{\lambda_0(\delta,r,\mu,\sigma_r(M^*_a),\|M^*_a\|_F,\alpha,C),\\
&\hspace{14em}\frac{(1-\mu-\delta)C^3}{4\sqrt{r}}\Bigg\}.
\end{align*}
Finally, we combine Lemmas \ref{lem:5-1}-\ref{lem:5-2} to get the bounds for the gradient and the Hessian.
\end{proof}

\subsection{Proof of Theorem \ref{thm:strict-saddle-2}}

In this subsection, we use similar notations:
\[ M := UU^T,\quad M^* := U^*(U^*)^T, \]
where  $M^*:=M_s^*$ is the global optimum. We also assume that $U^*$ is the minimizer of $\min_{X \in \mathcal{X}^*}\|U-X\|_F$ when there is no ambiguity about $U$. In this case, the distance is given by
\[ \mathrm{dist}(U,\mathcal{X}^*) = \|U - U^*\|_F. \]
The proof of Theorem \ref{thm:strict-saddle-2} is similar to that of Theorem \ref{thm:strict-saddle-1}. We first consider the case when $\|UU^T\|_F$ is large.
\begin{lemma}\label{lem:7-1}
Given a constant $\epsilon>0$, if 
\[ \|UU^T\|_F^{2} \geq \max\left\{\frac{2(1+\delta)}{1-\delta} \|U^*(U^*)^T\|_F^{2}, \left(\frac{2\lambda\sqrt{r}}{1-\delta}\right)^{4/3}  \right\}, \]
then
\[ \|\nabla h_s(U)\|_F \geq \lambda. \]
\end{lemma}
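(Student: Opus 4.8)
The plan is to transcribe the proof of the preceding asymmetric lemma (the one treating the regime where $\|WW^T\|_F^{3/2}$ is large) to the symmetric factorized objective $h_s(U)=f_s(UU^T)$, testing the gradient against the direction $\Delta:=U$, and deleting the regularizer contribution. Write $M:=UU^T$ and $M^*:=M^*_s=U^*(U^*)^T$. First I would record the gradient identity $\nabla h_s(U)=2\nabla f_s(M)U$ (using the convention that $\nabla f_s$ is symmetric), so that $\langle\nabla h_s(U),U\rangle=2\langle\nabla f_s(M),M\rangle$. Invoking the first-order optimality condition $\nabla f_s(M^*)U^*=0$ valid under Assumption \ref{asp:2} (exact parametrization), the boundary term $\langle\nabla f_s(M^*),M^*\rangle$ vanishes, and by the fundamental theorem of calculus along the segment $[M^*,M]$,
\[ \langle\nabla f_s(M),M\rangle=\int_0^1[\nabla^2 f_s(M^*+s(M-M^*))](M-M^*,M)\,ds+\langle\nabla f_s(M^*),M\rangle. \]

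Next I would lower-bound the integrand. Splitting $M-M^*=M-M^*$ in the first slot and $M=(M-M^*)+M^*$ in the second is not needed; instead use $[\nabla^2 f_s](M-M^*,M)=[\nabla^2 f_s](M,M)-[\nabla^2 f_s](M^*,M)$, so the $\delta$-RIP$_{2r,2r}$ property (note $\mathrm{rank}(M)\le 2r$, $\mathrm{rank}(M^*)\le 2r$, and the base point has rank $\le 2r$) gives, exactly as in the estimate of $\langle\nabla f_a(UV^T),UV^T\rangle$ in the asymmetric lemma,
\[ \langle\nabla f_s(M),M\rangle\geq (1-\delta)\|M\|_F^2-(1+\delta)\|M^*\|_F\|M\|_F, \]
where the stray term $\langle\nabla f_s(M^*),M\rangle$ is dropped after noting it is nonnegative (the symmetric analogue of the argument used for $\nabla f_a(M^*)$ in that lemma). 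Then I would use the first hypothesis $\|UU^T\|_F^2\geq \frac{2(1+\delta)}{1-\delta}\|U^*(U^*)^T\|_F^2$, i.e.\ $\|M^*\|_F\leq \sqrt{(1-\delta)/(2(1+\delta))}\,\|M\|_F$, to absorb the cross term and obtain $\langle\nabla h_s(U),U\rangle\geq c_\delta\,\|M\|_F^2$ for a constant $c_\delta>0$ depending only on $\delta$ (positive precisely because $\delta<1/3$).

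Finally, since $\|\Delta\|_F=\|U\|_F$ and $\|U\|_F^2=\mathrm{tr}(M)\leq \sqrt{r}\,\|M\|_F$, Cauchy's inequality yields
\[ \|\nabla h_s(U)\|_F\geq \frac{\langle\nabla h_s(U),U\rangle}{\|U\|_F}\geq \frac{c_\delta}{r^{1/4}}\,\|UU^T\|_F^{3/2}, \]
and the second hypothesis $\|UU^T\|_F^2\geq\big(\frac{2\lambda\sqrt{r}}{1-\delta}\big)^{4/3}$, equivalently $\|UU^T\|_F^{3/2}\geq\frac{2\lambda\sqrt{r}}{1-\delta}$, closes the chain to $\|\nabla h_s(U)\|_F\geq\lambda$ once the value of $c_\delta$ is substituted. (The parameter $\epsilon$ in the statement plays no role here, as it does not appear in the conclusion.)

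The main obstacle I anticipate is purely the bookkeeping of constants in the last two displays: one must make $c_\delta$ explicit enough — by the right weighted Young inequality or by substituting the ratio bound directly — so that exactly the two thresholds $\frac{2(1+\delta)}{1-\delta}$ and $\big(\frac{2\lambda\sqrt r}{1-\delta}\big)^{4/3}$ suffice, while tracking whether the coarse bound $\|U\|_F\leq\sqrt r\,\|M\|_F^{1/2}$ or the sharper $r^{1/4}\|M\|_F^{1/2}$ is needed. A secondary point requiring care is the justification that $\langle\nabla f_s(M^*),M\rangle\geq 0$ under the exact-parametrization first-order conditions; everything else is a routine transcription of the asymmetric argument with $W$ replaced by $U$ and the $\mu$-regularizer terms removed.
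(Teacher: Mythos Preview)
Your approach matches the paper's: test the gradient against $\Delta=U$, lower-bound $\langle\nabla f_s(M),M\rangle$ via the RIP estimate on $\int_0^1[\nabla^2 f_s(\cdot)](M-M^*,M)\,ds$, absorb the cross term using the first hypothesis, then divide by $\|U\|_F\le\sqrt r\,\|UU^T\|_F^{1/2}$ and invoke the second hypothesis. One correction to your handling of the boundary term: Assumption~\ref{asp:1} (not Assumption~\ref{asp:2}) asserts that $M^*_s$ is a first-order critical point of $f_s$ as an \emph{unconstrained} function, so $\nabla f_s(M^*)=0$ and the term $\langle\nabla f_s(M^*),M\rangle$ simply vanishes --- this is exactly what the paper (and the asymmetric lemma you reference) uses, and your proposed nonnegativity fallback under the weaker condition $\nabla f_s(M^*)U^*=0$ would neither be needed nor, in general, valid.
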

\begin{proof}
Choosing the direction $\Delta := U$, we can calculate that
\begin{align*}
    \langle \nabla h_s(U), \Delta \rangle = \langle \nabla f_s(UU^T), UU^T\rangle.
\end{align*}
Using the $\delta$-RIP$_{2r,2r}$ property, we have
\begin{align*}
    \langle \nabla f_s(UU^T), UU^T\rangle &= \int_0^1 [\nabla^2 f_s(M^* + s(M-M^*)][ M - M^*, M ]\\
    &\geq (1-\delta)\|M\|_F^2 - (1+\delta) \|M^*\|_F \|M\|_F\\
    &\geq \frac{1-\delta}{2}\|M\|_F^2.
\end{align*}
Moreover, 
\[ \|\Delta\|_F = \|U\|_F \leq \sqrt{r} \|UU^T\|_F^{1/2}. \]
This leads to
\begin{align*}
    \| \nabla h_s(U) \|_F &\geq \frac{\langle \nabla h_s(U), \Delta\rangle}{\|\Delta\|_F} = \frac{\langle \nabla f_s(UU^T), UU^T\rangle}{\|U\|_F} \geq \frac{1-\delta}{2\sqrt{r}}\|UU^T\|_F^{3/2} \geq \lambda.
\end{align*}
\end{proof}

The next lemma is a counterpart of Lemma \ref{lem:5-1}.
\begin{lemma}
Consider positive constants  $\alpha,C,\lambda$ such that
\[ \lambda \leq 2(\sqrt{r}C)^{-1}(\sqrt{2}-1)\sigma_r^2(U^*) \cdot \alpha^2,\quad G > \frac{(1+\delta)\lambda^2}{4G^2}, \]
where $G:= -\lambda_{min}(\nabla f_s(M))$. If
\begin{align*} 
 \|UU^T\|_F \leq C^2,\quad \|U - U^*\|_F \geq \alpha,\quad \|\nabla h_s(U)\|_F \leq \lambda,
\end{align*}
then the inequality $G \geq c\alpha^2$ holds for some constant $c>0$ independent of $\alpha,\lambda,C$. Moreover, if 
\begin{align}\label{eqn:cond-sym} \sigma_r^2(U) \leq \frac{1}{1+\delta} \left[ G - \frac{(1+\delta)\lambda^2}{4G^2} \right] - \tau, \end{align}
then there exists some positive constant $\tau$ such that 
\[ \lambda_{min}(\nabla^2 h_s(U)) \leq - 2(1+\delta)\tau. \]
\end{lemma}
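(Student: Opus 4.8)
The plan is to follow the two-part template of the asymmetric Lemmas~\ref{lem:5-2} and~\ref{lem:5-1}, with the one structural change that the governing scalar $G$ is now $-\lambda_{min}(\nabla f_s(M))$ rather than an operator norm, and that the negative-curvature direction is a rank-one matrix $vq^{T}$ instead of a block pair $(\Delta_U,\Delta_V)$. Throughout I write $M=UU^{T}$, $M^{*}=M^{*}_{s}$, and I use that $\nabla f_s(M)$ is symmetric (assumed without loss of generality) and that $\nabla h_s(U)=2\nabla f_s(M)U$, so $\|\nabla f_s(M)U\|_F=\|U^{T}\nabla f_s(M)\|_F=\|\nabla h_s(U)\|_F/2\le\lambda/2$.

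For the bound $G\ge c\alpha^{2}$: I would start from $\|U-U^{*}\|_F\ge\alpha$ and the fourth item of Lemma~\ref{lem:tech} (valid since $\rank(U^{*})=r$ by Assumption~\ref{asp:2}) to get $\|M-M^{*}\|_F^{2}\ge 2(\sqrt2-1)\sigma_r^{2}(U^{*})\alpha^{2}$, and apply the $\delta$-RIP$_{2r,2r}$ property on the segment from $M^{*}$ to $M$ to obtain $\langle\nabla f_s(M),M-M^{*}\rangle\ge(1-\delta)\|M-M^{*}\|_F^{2}$. I then split the left side: the term $\langle\nabla f_s(M),M\rangle=\langle\nabla f_s(M)U,U\rangle$ is at most $(\lambda/2)\sqrt r\,C$ in absolute value, using $\|U\|_F\le\sqrt r\,\|UU^{T}\|_F^{1/2}\le\sqrt r\,C$; and, since $M^{*}\succeq0$, the eigenvalue inequality gives $-\langle\nabla f_s(M),M^{*}\rangle\le -\lambda_{min}(\nabla f_s(M))\,\tr(M^{*})=G\,\tr(M^{*})$. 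Plugging the hypothesis $\lambda\le 2(\sqrt r C)^{-1}(\sqrt2-1)\sigma_r^{2}(U^{*})\alpha^{2}$ into the $\lambda$-term leaves $G\,\tr(M^{*})\ge(1-2\delta)(\sqrt2-1)\sigma_r^{2}(U^{*})\alpha^{2}$, which is positive because $\delta<1/3$; dividing by $\tr(M^{*})\le\sqrt r\,\|M^{*}\|_F$ yields $G\ge c\alpha^{2}$ with $c:=(\sqrt2-1)(1-2\delta)\sigma_r(M^{*}_{s})/(\sqrt r\,\|M^{*}_{s}\|_F)$, which depends only on $\delta,r,\sigma_r(M^{*}_{s}),\|M^{*}_{s}\|_F$ and not on $\alpha,\lambda,C$. (This is the place where the symmetric estimate degrades from the linear-in-$\alpha$ bound of Lemma~\ref{lem:5-2} to $\alpha^{2}$, because $M^{*}$ enters through $\tr(M^{*})$ instead of a nuclear-norm duality.)

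For the Hessian bound: I would pick a unit eigenvector $v$ of $\nabla f_s(M)$ with $\nabla f_s(M)v=-Gv$ and a unit right singular vector $q$ of $U$ with $\|Uq\|_2=\sigma_r(U)$, and take $\Delta:=vq^{T}$, so $\|\Delta\|_F=1$. From the chain-rule formula
\[ [\nabla^{2}h_s(U)](\Delta,\Delta)=2\langle\nabla f_s(M),\Delta\Delta^{T}\rangle+[\nabla^{2}f_s(M)]\big(U\Delta^{T}+\Delta U^{T},\,U\Delta^{T}+\Delta U^{T}\big), \]
the first term equals $2v^{T}\nabla f_s(M)v=-2G$ (as $\Delta\Delta^{T}=vv^{T}$), while $U\Delta^{T}+\Delta U^{T}=(Uq)v^{T}+v(Uq)^{T}$ has rank at most $2\le 2r$, so the $\delta$-RIP$_{2r,2r}$ property bounds the second term by $(1+\delta)\big(2\sigma_r^{2}(U)+2(v^{T}Uq)^{2}\big)$. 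I then bound $(v^{T}Uq)^{2}\le\|U^{T}v\|_F^{2}$ and use $U^{T}\nabla f_s(M)v=-G\,U^{T}v$ together with $\|U^{T}\nabla f_s(M)\|_F\le\lambda/2$ to get $\|U^{T}v\|_F\le\lambda/(2G)$, hence $(v^{T}Uq)^{2}\le\lambda^{2}/(4G^{2})$. Collecting terms gives $\lambda_{min}(\nabla^{2}h_s(U))\le -2G+2(1+\delta)\sigma_r^{2}(U)+(1+\delta)\lambda^{2}/(2G^{2})$, and substituting the hypothesis $\sigma_r^{2}(U)\le\frac{1}{1+\delta}\big[G-\frac{(1+\delta)\lambda^{2}}{4G^{2}}\big]-\tau$ collapses the right-hand side precisely to $-2(1+\delta)\tau$ (using $\|\Delta\|_F^{2}=1$).

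The estimates are routine once the direction $\Delta=vq^{T}$ is fixed, so I do not expect a serious technical obstacle; the step that most needs care is the lower bound on $G$, which — unlike the asymmetric case — must be extracted through $\lambda_{min}(\nabla f_s(M))$ and the PSD structure of $M^{*}$, and this is precisely what forces the $\alpha^{2}$ (rather than $\alpha$) dependence. The remaining points to watch are: that the rank-$2$ perturbation $(Uq)v^{T}+v(Uq)^{T}$ is admissible for RIP$_{2r,2r}$ (it is, since $r\ge1$); and that all the final constants stay independent of $C$. The hypothesis $G>(1+\delta)\lambda^{2}/(4G^{2})$ only serves to make the admissible range for $\sigma_r^{2}(U)$ in the last hypothesis nonempty with positive slack $\tau$, and is not otherwise used in the argument.
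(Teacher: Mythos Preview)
Your proposal is correct and follows essentially the same approach as the paper: the same direction $\Delta=vq^{T}$ (the paper calls it $uq^{T}$), the same RIP bound on $\|U\Delta^{T}+\Delta U^{T}\|_F^{2}$, the same estimate $\|U^{T}v\|_F\le\lambda/(2G)$ via $\nabla f_s(M)v=-Gv$, and the same extraction of $G\ge c\alpha^{2}$ through $\langle\nabla f_s(M),M^{*}\rangle\ge\lambda_{\min}(\nabla f_s(M))\tr(M^{*})$. Your arithmetic in the first part actually gives the sharper factor $(1-2\delta)$ rather than the paper's $(1-\delta)$, and your final Hessian bound $-2(1+\delta)\tau$ matches the statement (the paper's last displayed line has a minor slip).
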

\begin{proof}

We choose a singular vector $q$ of $U$ such that
\[ \|q\|_2 = 1,\quad \|Uq\|_2 = \sigma_r(U). \]
We first prove the existence of the constant $c$. The $\delta$-RIP$_{2r,2r}$ property gives
\[ \langle \nabla f_s(M), M^*-M\rangle \leq -(1-\delta) \|M-M^*\|_F^2. \]
Using the assumption of this lemma, we have
\begin{align}\label{eqn:7-101} 
\|\nabla f_s(M) U\|_2 \leq \left\|  \nabla f_s(M) U \right\|_F = \frac12 \|\nabla h_s(U)\|_F \leq \frac12 \lambda,
\end{align}
which leads to
\[ \langle \nabla f_s(M), M\rangle = \langle\nabla f_s(M) U, U\rangle \leq \|\nabla f_s(M)U\|_F\|U\|_F \leq \frac12 \lambda \cdot \sqrt{r}C. \]
Substituting into \eqref{eqn:7-101}, it follows that
\[ \langle \nabla f_s(M), M^*\rangle \leq -(1-\delta) \|M-M^*\|_F^2 + \frac12 \lambda \cdot \sqrt{r}C. \]
Using Lemma \ref{lem:tech}, we have
\begin{align*}
\|M-M^*\|_F^2 \geq 2(\sqrt{2}-1)\sigma_r^2(U^*)\|U-U^*\|_F^2  \geq 2(\sqrt{2}-1)\sigma_r^2(U^*) \cdot \alpha^2. 
\end{align*}
By the condition on $\lambda$, it follows that
\begin{align}\label{eqn:7-105} \langle \nabla f_s(M), M^*\rangle \leq -(1-\delta) \|M-M^*\|_F^2 + \frac12 \lambda \cdot \sqrt{r}C \leq -(\sqrt{2}-1)(1-\delta) \sigma_r^2(U^*) \cdot \alpha^2. \end{align}
The above inequality also indicates that $\lambda_{min}(\nabla f_s(M)) < 0$. Using the relations that
\[ \nabla f_s(M) \succeq \lambda_{min}(\nabla f_s(M)) \cdot I_n,\quad M^*\succeq0, \]
we arrive at
\[ \langle \nabla f_s(M), M^*\rangle \geq \lambda_{min}(\nabla f_s(M)) \tr(M^*) \geq \sqrt{r}\|M^*\|_F \cdot \lambda_{min}(\nabla f_s(M)). \]
Combining the last inequality with \eqref{eqn:7-105}, we obtain
\begin{align*}
    \lambda_{min}(\nabla f_s(M)) \leq - (\sqrt{r}\|M^*\|_F)^{-1} (\sqrt{2}-1)(1-\delta) \sigma_r^2(U^*) \cdot \alpha^2 = -c\alpha^2
\end{align*}
and thus $G \geq c \alpha^2$, where
\[ c := (\sqrt{r}\|M^*\|_F)^{-1} (\sqrt{2}-1)(1-\delta) \sigma_r^2(U^*) \]

Next, we prove the upper bound on the minimal eigenvalue. We choose an eigenvector $u$ such that
\[ \|u\|_2 = 1,\quad \lambda_{min}(\nabla f_s(M)) = u^T \nabla f_s(M) u. \]
The direction is chosen to be
\[ \Delta := uq^T. \]
For the Hessian of $h_s(\cdot,\cdot)$, we can calculate that
\begin{align}\label{eqn:7-8} \langle \nabla f_s(M) ,\Delta\Delta ^T\rangle = \lambda_{min}(\nabla f_s(M)) = -G \end{align}
and the $\delta$-RIP$_{2r,2r}$ property gives
\begin{align}\label{eqn:7-9}
    \nonumber[\nabla ^2f_s(M)](\Delta U^T + &U\Delta^T,\Delta U^T + U\Delta^T)\\
    \nonumber&\leq (1+\delta) \|\Delta U^T + U\Delta^T\|_F^2 = (1+\delta) \| u (Uq)^T + (Uq)u^T\|_F^2\\
    \nonumber&= 2(1+\delta) \|Uq\|_F^2 + 2(1+\delta) [ q^T (U^Tu)]^2\\
    &\leq 2(1+\delta)\sigma_r^2(U) + 2 (1+\delta) \cdot \|U^Tu\|_F^2 .
\end{align}
%
%
By letting the vector $\tilde{v}$ be
\[ \|\tilde{v}\|_2 = 1,\quad \lambda_{min}(\nabla f_s(M)) u = \nabla f_s(M) \tilde{v}, \]
the inequality \eqref{eqn:7-101} implies that
\begin{align*} 
\|U^Tu\|_F^2 &= \frac{\|U^T \nabla f_s(M) \tilde{v}\|_F^2}{\lambda_{min}^2(\nabla f_s(M))} = \frac{\|U^T \nabla f_s(M) \tilde{v}\|_2^2}{\lambda_{min}^2(\nabla f_s(M))} \leq \frac{\|U^T \nabla f_s(M)\|_2^2 \|\tilde{v}\|_2^2}{\lambda_{min}^2(\nabla f_s(M))} \leq \frac{\lambda^2}{4G^2}.
\end{align*}
%
Substituting into \eqref{eqn:7-9}, we obtain
\begin{align}\label{eqn:7-12}
    &\quad[\nabla ^2f_s(M)](\Delta U^T + U\Delta^T,\Delta U^T + U\Delta^T) \leq 2(1+\delta)\sigma_r^2(U) + (1+\delta) \cdot \frac{\lambda^2}{2G^2}.
\end{align}
%
Combining \eqref{eqn:7-8} and \eqref{eqn:7-12}, it follows that
\begin{align*}
    [\nabla^2 h_s(U)](\Delta,\Delta) \leq -2G + 2(1+\delta)\sigma_r^2(U) + (1+\delta) \cdot \frac{\lambda^2}{2G^2}.
\end{align*}
Since $\|\Delta\|_F^2 = 1$, the above inequality implies
\[ \lambda_{min}(\nabla^2 h_s(U)) \leq -2G + 2(1+\delta)\sigma_r^2(U) + (1+\delta) \cdot \frac{\lambda^2}{2G^2} \leq - (1+\delta) \tau. \]

\end{proof}

We finally give the counterpart of Lemma \ref{lem:5-2}, which states that the condition \eqref{eqn:cond-sym} always holds when $\delta < 1/3$.
\begin{lemma}\label{lem:7-2}
Given positive constants $\alpha,C,\epsilon,\lambda$, if
\begin{align*} 
\max\{\|UU^T\|_F,\|U^*(U^*)^T\|_F\} \leq C^2,~\|U - U^*\|_F &\geq \alpha,~ \|\nabla h_s(U)\|_F \leq \lambda,~ \delta < 1/3,
\end{align*}
then there exists a positive constant $\lambda_0 (\delta,W^*,\alpha,C)$ such that
\begin{align}\label{eqn:cond-sym-1} \sigma_r^2(U) \leq \frac{1}{1+\delta} \left[ G - \frac{(1+\delta)\lambda^2}{4G^2} - \lambda \right] \end{align}
whenever 
\[ 0 < \lambda \leq \lambda_0 (\delta,\sigma_r(M^*_s),\|M^*_s\|_F,\alpha,C). \]
\end{lemma}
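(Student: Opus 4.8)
The plan is to adapt the proof of Lemma \ref{lem:5-2} to the symmetric case, where it simplifies substantially: since $M=UU^T$ one has the \emph{exact} identity $\sigma_r(M)=\sigma_r^2(U)$, so there is no need to balance two factors and the entire role played by the parameter $\epsilon$ in Lemma \ref{lem:5-2} disappears. I would argue by contradiction, assuming that \eqref{eqn:cond-sym-1} fails, i.e.\ that
\[ \sigma_r^2(U) \;>\; \frac{1}{1+\delta}\Big[ G-\frac{(1+\delta)\lambda^2}{4G^2}-\lambda \Big], \]
and refer to this hypothesis as $(\star)$. The first step is to pin down $G=-\lambda_{min}(\nabla f_s(M))$ from both sides. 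The lower bound $G\geq c\alpha^2$, for a constant $c$ depending only on $\delta,r,\sigma_r(M^*_s),\|M^*_s\|_F$, is obtained exactly as in the preceding lemma: from $\|\nabla f_s(M)U\|_F=\tfrac12\|\nabla h_s(U)\|_F\leq\lambda/2$, the $\delta$-RIP$_{2r,2r}$ bound $\langle\nabla f_s(M),M-M^*\rangle\geq(1-\delta)\|M-M^*\|_F^2$, Lemma \ref{lem:tech}, and $M^*\succeq0$. The upper bound $G\leq(1+\delta)C^2+O(\lambda)$ then follows from $(\star)$ combined with $\sigma_r^2(U)=\sigma_r(M)\leq\|M\|_F\leq C^2$. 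Together these show that, once $\lambda$ is small, $(\star)$ forces $\sigma_r^2(U)\geq\tfrac{c\alpha^2}{1+\delta}-O(\lambda)>0$, so $U$ has rank $r$ and $\sigma_r(U)$ is bounded below by a positive constant.

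The heart of the argument is a Singular Value Projection step. I would set $N:=-\tfrac{1}{1+\delta}\nabla f_s(M)$, $\bar M:=\mathcal{P}_r(M+N)$ (the symmetric SVP projection, keeping the $r$ largest positive eigenvalues), and $\phi(M'):=\langle\nabla f_s(M),M'-M\rangle+\tfrac{1+\delta}{2}\|M'-M\|_F^2$. Since $\delta<1/3$ and, by Lemma \ref{lem:tech} and $\|U-U^*\|_F\geq\alpha$, $\|M-M^*\|_F^2\geq 2(\sqrt2-1)\sigma_r^2(U^*)\alpha^2>0$ (so $M\neq M^*$), the descent inequality \eqref{eqn:2-1} applied to $f_s$ gives
\[ -\phi(\bar M)\;\geq\;\tfrac{1-3\delta}{2}\|M-M^*\|_F^2\;\geq\;(1-3\delta)(\sqrt2-1)\sigma_r^2(U^*)\alpha^2\;=:\;K\;>\;0. \]
Completing the square and using orthogonality of SVD truncation rewrites the left side as $\tfrac{1+\delta}{2}\big[\|N\|_F^2-\|M+N\|_F^2+\|\mathcal{P}_r(M+N)\|_F^2\big]$; since $|\langle N,M\rangle|\leq\|U^TN\|_F\|U\|_F\leq\tfrac{\sqrt r\,C\lambda}{2(1+\delta)}$, this yields the lower bound $\|\mathcal{P}_r(M+N)\|_F^2\geq\|M\|_F^2+\tfrac{2K}{1+\delta}-O(\lambda)$. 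For the matching upper bound I would split $N=N_1+N_2+N_3+N_4$ along $\mathcal{P}_U$ and $I-\mathcal{P}_U$: the three blocks meeting $\mathcal{P}_U$ have Frobenius norm at most $\sigma_r^{-1}(U)\|U^TN\|_F=O(\lambda)$ (this is where the positive lower bound on $\sigma_r(U)$ enters), and $\|N\|_2\leq O(\lambda)+\|N_4\|_2$ is bounded, so by Weyl's inequality $\|\mathcal{P}_r(M+N)\|_F^2=\|\mathcal{P}_r(M+N_4)\|_F^2+O(\lambda)$. Because $M$ and $N_4$ have orthogonal ranges, the top-$r$ eigenvalues of $M+N_4$ are the $k$ largest eigenvalues $\mu_1\geq\cdots\geq\mu_k$ of $M$ together with $r-k$ positive eigenvalues of $N_4$, each bounded by $\lambda_{max}(N_4)\leq\tfrac{G}{1+\delta}$ (as $N_4$ is a compression of $-\tfrac{1}{1+\delta}\nabla f_s(M)$ and $\nabla f_s(M)\succeq-G\,I$), which gives $\|\mathcal{P}_r(M+N)\|_F^2\leq\sum_{i=1}^k\mu_i^2+(r-k)\tfrac{G^2}{(1+\delta)^2}+O(\lambda)$.

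Comparing the two bounds and cancelling $\sum_{i=1}^k\mu_i^2$ leaves $\sum_{i=k+1}^r\mu_i^2+\tfrac{2K}{1+\delta}\leq(r-k)\tfrac{G^2}{(1+\delta)^2}+O(\lambda)$. If $k=r$ this reads $\tfrac{2K}{1+\delta}\leq O(\lambda)$, impossible for small $\lambda$ since $K>0$ is independent of $\lambda$; hence $1\leq r-k\leq r$, and using $\sum_{i=k+1}^r\mu_i^2\geq(r-k)\mu_r^2=(r-k)\sigma_r^4(U)$ and dividing by $r-k$ yields $\sigma_r^4(U)+\tfrac{2K}{(1+\delta)r}\leq\tfrac{G^2}{(1+\delta)^2}+O(\lambda)$. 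On the other hand, squaring $(\star)$ — legitimate because $\tfrac{G}{1+\delta}\geq\tfrac{c\alpha^2}{1+\delta}$ is bounded below while $\tfrac{(1+\delta)\lambda^2}{4G^2}$ and $\tfrac{\lambda}{1+\delta}$ are $O(\lambda)$ and $G$ is bounded above — gives $\sigma_r^4(U)\geq\tfrac{G^2}{(1+\delta)^2}-O(\lambda)$. The last two inequalities force $\tfrac{2K}{(1+\delta)r}\leq O(\lambda)$, contradicting $K>0$ once $\lambda$ is small enough; choosing $\lambda_0(\delta,\sigma_r(M^*_s),\|M^*_s\|_F,\alpha,C)$ below the threshold at which every ``for small $\lambda$'' invocation above is valid then completes the proof. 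I expect the main obstacle to be the control of $\|\mathcal{P}_r(M+N)\|_F$ in the Singular Value Projection step — the block splitting, the Weyl estimate, and the eigenvalue bound on $N_4$ — since this is where $\delta<1/3$ genuinely enters (through $K>0$) and where the argument departs from a routine transcription of Lemma \ref{lem:5-2}.
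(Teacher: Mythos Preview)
Your proposal is correct and follows essentially the same route as the paper: argue by contradiction, run one SVP step with $N=-\tfrac{1}{1+\delta}\nabla f_s(M)$, use the descent inequality \eqref{eqn:2-1} to get the gap $K>0$, split $N=N_1+N_2+N_3+N_4$ along $\mathcal{P}_U$, apply Weyl, and compare the resulting upper bound on $\sigma_r^4(U)$ with the lower bound obtained by squaring $(\star)$. The only cosmetic difference is that you take $\mathcal{P}_r$ to be the PSD projection while the paper keeps the top $r$ eigenvalues without dropping negatives; under the contradiction hypothesis $\sigma_r^2(U)$ is bounded below so the top $r$ eigenvalues of $M+N_4$ (and hence of $M+N$) are positive anyway, and the two choices coincide.
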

\begin{proof}

We prove by contradiction, i.e., we assume
\begin{align}\label{eqn:cond-sym-2} \sigma_r^2(U) > \frac{1}{1+\delta} \left[ G - \frac{(1+\delta)\lambda^2}{4G^2} - \lambda \right] \geq \frac{c\alpha^2}{1+\delta} + \mathrm{poly}(\lambda). \end{align}
To follow the proof of Lemma \ref{lem:5-2}, we also divide the argument into three steps, although the first step is superficial.
\paragraph{Step I.} We first give a lower bound for $\lambda_r(M)$. In the symmetric case, this step is straightforward, since we always have
\begin{align}\label{eqn:7-15} \lambda_r^2(M) = \sigma_r^4(U). \end{align}

\paragraph{Step II.} Next, we derive an upper bound for $\lambda_r(M)$. We define
\[ \bar{M} := \mathcal{P}_r\left[ M - \frac{1}{1+\delta} \nabla f_s(M) \right], \]
where $\mathcal{P}_r$ is the orthogonal projection onto the low-rank manifold (we do not drop negative eigenvalues in this proof). Since $M \neq M^*$ and $\delta < 1/3$, we recall that inequality \eqref{eqn:2-1} gives
\begin{align*}
    -\phi(\bar{M}) &\geq \frac{1-3\delta}{1-\delta} [ f_s(M) - f_s(M^*) ]  \geq  \frac{1-3\delta}{2}\|M-M^*\|_F^2\\
    &\geq (1-3\delta) \cdot (\sqrt{2}-1)\sigma_r^2(W^*)\alpha^2  := K > 0,
\end{align*}
where the second inequality comes from Lemma \ref{lem:tech} and
\[ -\phi(\bar{M}) = \langle \nabla f_s(M), M - \bar{M}\rangle - \frac{1+\delta}{2}\|M-\bar{M}\|_F^2. \]
Hence, 
\begin{align}\label{eqn:7-13}
    \langle \nabla f_s(M), M - \bar{M}\rangle - \frac{1+\delta}{2}\|M-\bar{M}\|_F^2 \geq K.
\end{align}
For simplicity, we define
\[ N := -\frac{1}{1 + \delta} \nabla f_s(M). \]
Then, $\bar{M} = \mathcal{P}_r(M+N)$ and the left-hand side of \eqref{eqn:7-13} is equal to
\begin{align}\label{eqn:7-14} 
\nonumber \langle \nabla f_s(M), M - \bar{M}\rangle &- \frac{1+\delta}{2}\|M-\bar{M}\|_F^2\\
\nonumber&= (1+\delta) \langle N,  \mathcal{P}_r(M+N) - M \rangle - \frac{1+\delta}{2} \| \mathcal{P}_r(M+N) - M \|_F^2\\
\nonumber&= \frac{1+\delta}{2}\left[ \|N\|_F^2 - \| N + M - \mathcal{P}_r(M+N) \|_F^2 \right]\\
&= \frac{1+\delta}{2}\left[ \|N\|_F^2 - \| N + M\|_F^2 + \|\mathcal{P}_r(M+N) \|_F^2 \right].
\end{align}
Similar to the proof of inequality \eqref{eqn:7-101}, we can prove that
\[ \|U^TN\|_F \leq \tilde{H} := \frac{\lambda}{2(1+\delta)}. \]
Then, we have
\begin{align*}
    -\tr[N^T (UU^T)] &\leq \|U^TN\|_F \|U\|_F \leq \tilde{H} \cdot \|U\|_F \leq \tilde{H} \cdot \sqrt{ \sqrt{r} \|UU^T\|_F } \leq \sqrt[4]{r}C \cdot \tilde{H}.
\end{align*}
Using the above relation, one can write
\begin{align*}
    \|N\|_F^2 - \| N + M\|_F^2 &= -2\tr[ N^T (UU^T) ] - \|M\|_F^2 \leq 2 \sqrt[4]{r}C \cdot \tilde{H} - \|M\|_F^2.
\end{align*}
Suppose that $\mathcal{P}_U$ is the orthogonal projections onto the column space of $U$. We define
\[ N_1 := \mathcal{P}_U N \mathcal{P}_U,~ N_2 := \mathcal{P}_U N (I-\mathcal{P}_U),~ N_3 := (I-\mathcal{P}_U)N\mathcal{P}_U,~ N_4 := (I-\mathcal{P}_U)N(I-\mathcal{P}_U). \]
Then, it follows from \eqref{eqn:cond-sym-2} that
\begin{align*}
    \|N_1\|_F &= \| \mathcal{P}_U N \mathcal{P}_U \|_F \leq \sigma_r^{-1}(U) \| U^T \mathcal{P}_U N \mathcal{P}_U \|_F \leq \sigma_r^{-1}(U) \| U^T N \|_F \leq \sigma_r^{-1}(U) \cdot \tilde{H}\\
    &\leq \left[\sqrt{\frac{1+\delta}{G}} + \mathrm{poly}(\lambda) \right] \cdot \tilde{H} \leq \left[\sqrt{\frac{1+\delta}{c\alpha^2}} + \mathrm{poly}(\lambda) \right] \cdot \tilde{H} := \kappa \tilde{H}.
\end{align*}
Similarly, we can prove that
\begin{align*}
    &\|N_1 + N_2\|_F = \| \mathcal{P}_U N\|_F \leq \kappa\tilde{H},\quad \|N_1 + N_3\|_F = \| N \mathcal{P}_V \|_F \leq \kappa \tilde{H},
\end{align*}
which leads to
\[ \|N_2\|_F \leq 2\kappa \tilde{H},\quad \|N_3\|_F \leq 2\kappa \tilde{H}. \]
Using Weyl's theorem, the following holds for every $1\leq i\leq r$:
\begin{align*}
    | \lambda_i(M+N) - \lambda_i(M+N_4) | \leq \|N_1+N_2+N_3\|_2 \leq \|N_1+N_2+N_3\|_F \leq 3\kappa \tilde{H}.
\end{align*}
Therefore, we have
\begin{align}\label{eqn:7-102}
    \nonumber\|\mathcal{P}_r(M+N) \|_F^2 &= \sum_{i=1}^r \lambda_i^2(M+N)\\
    \nonumber&\geq \sum_{i=1}^r \lambda_i^2(M+N_4) - r \cdot 3\kappa \tilde{H} \cdot ( \|M+N\|_2 + \|M+N_4\|_2 )\\
    \nonumber&\geq \sum_{i=1}^r \lambda_i^2(M+N_4) - 6r \kappa \tilde{H} \cdot ( \|M\|_2 + \|N\|_2 )\\
    &\geq \sum_{i=1}^r \lambda_i^2(M+N_4) - 6r \kappa \tilde{H} \cdot \left( \|M\|_F  + \frac{G}{1+\delta} \right).
\end{align}
%
Similar to the asymmetric case, we can prove that
\[ \frac{G}{1+\delta} \leq \|M\|_F + \mathrm{poly}(\lambda). \]
holds under the assumption \eqref{eqn:cond-sym-2}. Therefore, we obtain the bound
\[ \|M\|_F  + \|N\|_F \leq 2\|M\|_F + \mathrm{poly}(\lambda) \leq 2C^2 + \mathrm{poly}(\lambda). \]
Substituting back into the previous estimate \eqref{eqn:7-102}, it follows that
\[ \|\mathcal{P}_r(M+N) \|_F^2 \geq \sum_{i=1}^r \lambda_i^2(M+N_4) + \mathrm{poly}(\lambda). \]
Now, since $M$ and $N_4$ have orthogonal column and row spaces, the maximal $r$ eigenvalues of $M+N_4$ are simply the maximal $r$ eigenvalues of the eigenvalues of $M$ and $N_4$, which we assume to be
\[ \lambda_i(M),~i=1,\dots,k \quad \text{and} \quad \lambda_i(N_4),~i=1,\dots,r-k. \]
Now, it follows from \eqref{eqn:7-14} that
\begin{align}\label{eqn:7-103}
    \nonumber&\hspace{-5em}\frac{2}{1+\delta}\left[\langle \nabla f_s(M), M - \bar{M}\rangle - \frac{1+\delta}{2}\|M-\bar{M}\|_F^2\right]\\
    \nonumber&=\|N\|_F^2 - \| N + M\|_F^2 + \|\mathcal{P}_r(M+N) \|_F^2\\
    \nonumber&\leq -\sum_{i=1}^r \lambda_i^2(M) + \sum_{i=1}^k \lambda_i^2(M) + \sum_{i=1}^{r-k} \lambda_i^2(N_4) + \mathrm{poly}(\lambda) + 2 \sqrt[4]{r}C \cdot \tilde{H}\\
    &= -\sum_{i=k+1}^{r} \lambda_i^2(M) + \sum_{i=1}^{r-k} \lambda_i^2(N_4) + \mathrm{poly}(\lambda).
\end{align}
Using the assumption \eqref{eqn:cond-sym-2} and the fact that $\lambda$ is small, we know that $\lambda_i(N_4) > 0$ for all $i\in\{1,\dots,k\}$. Therefore,
\[ -\sum_{i=k+1}^{r} \lambda_i^2(M) + \sum_{i=1}^{r-k} \lambda_i^2(N_4) \leq -(r-k) \lambda_r^2(M) + (r-k)\lambda_{max}(N_4)^2. \]
Substituting into \eqref{eqn:7-103} gives rise to 
\begin{align*}
    &\hspace{-8em}\frac{2}{1+\delta}\left[\langle \nabla f_s(M), M - \bar{M}\rangle - \frac{1+\delta}{2}\|M-\bar{M}\|_F^2\right]\\
    &\leq -(r-k) \lambda_r^2(M) + (r-k)\lambda_{max}(N_4)^2 + \mathrm{poly}(\lambda)\\
    &\leq -(r-k) \lambda_r^2(M) + (r-k)\lambda_{max}(N)^2 + \mathrm{poly}(\lambda).
\end{align*}
If $k=r$, then the above inequality and inequality \eqref{eqn:7-13} imply that
\[ \mathrm{poly}(\lambda) \geq K = O(\alpha^2), \]
which contradicts the assumption that $\lambda$ is small. Hence, we conclude that $r-k\geq1$. 
Combining with \eqref{eqn:7-13}, we obtain the upper bound
\begin{align}\label{eqn:7-16}
    \nonumber\lambda_r^2(M) &\leq - \frac{2}{1+\delta}\cdot \frac{K}{r-k} + \lambda_{max}(N)^2 + \frac{1}{r-k} \cdot \mathrm{poly}(\lambda)\\
    &= - \frac{2}{1+\delta}\cdot \frac{K}{r} + \lambda_{max}(N)^2 + \mathrm{poly}(\lambda).
\end{align}

\paragraph{Step III.} In the last step, we combine the relations \eqref{eqn:7-15} and \eqref{eqn:7-16}, which leads to
\begin{align*}
    \sigma_r^4(U) \leq - \frac{2}{1+\delta}\cdot \frac{K}{r} + \frac{1}{(1+\delta)^2}G^2 + \mathrm{poly}(\lambda).
\end{align*}
This means that
\begin{align*}
    \sigma_r^4(U) + \frac{2}{1+\delta}\cdot \frac{K}{r} \leq \frac{1}{(1+\delta)^2}G^2 + \mathrm{poly}(\lambda).
\end{align*}
%
Since $K>0$ has lower bounds that are independent of $\lambda$, we can choose $\lambda$ to be small enough such that
\begin{align*}
    \sigma_r^4(U) + \frac{1}{1+\delta}\cdot \frac{K}{r} \leq \frac{1}{(1+\delta)^2}G^2.
\end{align*}
However, considering the assumption \eqref{eqn:cond-sym-2}, we have
\begin{align*}
    \sigma_r^4(U) &\geq \frac{1}{(1+\delta)^2} \left[ G - \frac{(1+\delta)\lambda^2}{4G^2} - \lambda \right]^2 = \frac{1}{(1+\delta)^2} G^2 - 2\lambda \cdot G + \mathrm{poly}(\lambda)\\
    &\geq \frac{1}{(1+\delta)^2} G^2 - 2\lambda \cdot (1+\delta)C^2 + \mathrm{poly}(\lambda) = \frac{1}{(1+\delta)^2} G^2 + \mathrm{poly}(\lambda),
\end{align*}
where the second inequality is due to $G\leq (1+\delta)C^2$, which can be proved similar to the asymmetric case. The above two inequalities cannot hold simultaneously when $\lambda$ is small enough. This contradiction means that the condition \eqref{eqn:cond-sym-1} holds by choosing 
\[ 0<\lambda \leq \lambda_0 (\delta,\sigma_r(M^*_s),\|M^*_s\|_F,\alpha,C), \]
for a small enough positive constant $\lambda_0 (\delta,\sigma_r(M^*_s),\|M^*_s\|_F,\alpha,C)$.

\end{proof}

%
\begin{proof}[Proof of Theorem \ref{thm:strict-saddle-2}]
We first choose 
\[ C:= \left[\frac{2(1+\delta)}{1-\delta} \|U^*(U^*)^T\|_F^{2} \right]^{1/4}. \]
Then, we select $\lambda_1$ as 
\[  \lambda_1(\delta,r,\sigma_r(M^*_s),\|M^*_s\|_F,\alpha) := \min\left\{\lambda_0(\delta,r,\sigma_r(M^*_s),\|M^*_s\|_F,\alpha,C), \frac{(1-\delta)C^3}{2\sqrt{r}} \right\}. \]
Finally, we combine Lemmas \ref{lem:7-1}-\ref{lem:7-2} to get the bounds for the gradient and the Hessian.
\end{proof}

\end{appendix}

\end{document}